\newlength{\asidelength}
\newcommand{\Mark}{\ifinner\relax\else\marginpar{\hskip.5in$\bullet$}\fi}
\newcommand{\DashFill}{\leaders\hbox to 1ex{\hss$-$\hss}\hfill}
\newcommand{\OneStrike}[1]{\setbox0\hbox{#1}\hbox to \wd0{\hbox to0pt{#1\hss}\DashFill}}
\def\DoNextStrike#1 {\OneStrike{#1}\leavevmode{}\Delim\NextStrike}
\def\SkipNextStrike#1\EndStrike{\relax}
\numberwithin{equation}{section}
\numberwithin{subsection}{section}
\theoremstyle{plain}
\newtheorem{thm}[equation]{Theorem}
\newtheorem{prop}[equation]{Proposition}
\newtheorem{assumption}[equation]{Basic Assumption}
\theoremstyle{definition}
\newtheorem{defn}[equation]{Definition}
\theoremstyle{remark}
\newtheorem{rem}[equation]{Remark}
\newtheorem{example}[equation]{Example}
\newcommand{\sSet}{{ \mathsf{sSet} }}
\newcommand{\Chaincx}{{ \mathsf{Ch} }}
\newcommand{\Spectra}{{ \mathsf{Sp}^\Sigma }}
\newcommand{\C}{{ \mathsf{C} }}
\newcommand{\D}{{ \mathsf{D} }}
\newcommand{\E}{{ \mathsf{E} }}
\newcommand{\SymArray}{{ \mathsf{SymArray} }}
\newcommand{\SymSeq}{{ \mathsf{SymSeq} }}
\newcommand{\Array}{{ \mathsf{Array} }}
\newcommand{\Seq}{{ \mathsf{Seq} }}
\newcommand{\F}{{ \mathsf{F} }}
\newcommand{\OrdF}{{ \tilde{\mathsf{F}} }}
\newcommand{\OrdSet}{{ \mathsf{OrdSet} }}
\newcommand{\ISO}{{ \mathsf{Iso} }}
\newcommand{\Set}{{ \mathsf{Set} }}
\newcommand{\Ar}{{ \mathsf{Ar} }}
\newcommand{\Rt}{{ \mathsf{Rt} }}
\newcommand{\Lt}{{ \mathsf{Lt} }}
\newcommand{\unit}{{ \mathsf{k} }}
\newcommand{\Alg}{{ \mathsf{Alg} }}
\newcommand{\Bi}{{ \mathsf{Bi} }}
\newcommand{\Cube}{{ \mathsf{Cube} }}
\newcommand{\pCube}{{ \mathsf{pCube} }}
\newcommand{\AlgO}{{ \Alg_\capO }}
\newcommand{\LtO}{{ \Lt_\capO }}
\newcommand{\RtO}{{ \Rt_\capO }}
\newcommand{\capO}{{ \mathcal{O} }}
\newcommand{\inmap}{{ \mathrm{in} }}
\newcommand{\id}{{ \mathrm{id} }}
\newcommand{\op}{{ \mathrm{op} }}
\newcommand{\pr}{{ \mathrm{pr} }}
\newcommand{\ol}[1]{{ \overline{{#1}} }}
\newcommand{\tensor}{{ \otimes }}
\newcommand{\tensorhat}{{ \hat{\tensor} }}
\newcommand{\existsunique}{{ \exists ! }}
\newcommand{\iso}{{ \cong }}
\newcommand{\Iso}{{  \ \cong \ }}
\newcommand{\rarrow}{{ \longrightarrow }}
\newcommand{\circhat}{{ \,\hat{\circ}\, }}
\newcommand{\functor}[3]{{ {#1}:{#2}\rarrow{#3} }}
\newcommand{\function}[3]{{ {#1}:{#2}\rarrow{#3} }}
\newcommand{\subsetof}{{ \ \subset\ }}
\newcommand{\monic}{ \ar@{ >->} }
\DeclareMathOperator*{\catend}{end}
\DeclareMathOperator*{\coend}{coend}
\DeclareMathOperator*{\colim}{colim}
\DeclareMathOperator{\Map}{Map}
\DeclareMathOperator{\U}{U}
\DeclareMathOperator{\Arrow}{Ar}
\title[Homotopy theory of modules over operads]{Homotopy theory of modules over operads and non-$\Sigma$ operads in monoidal model categories}
\author{John E. Harper}
\address{Institut de G\'eom\'etrie, Alg\`ebre et Topologie, EPFL, CH-1015 Lausanne, Switzerland}
\address{Department of Mathematics, University of Notre Dame, Notre Dame, IN 46556, USA}
\email{john.edward.harper@gmail.com}
\begin{document}
\maketitle

%
%
%

\section{Introduction}\label{sec:introduction}

There are many interesting situations in which algebraic structure can be described by operads \cite{Basterra_Mandell, Ginzburg_Kapranov, Goerss_Hopkins, Kriz_May, Mandell, May, McClure_Smith_conjecture}. Let $(\C,\tensor,\unit)$ be a closed symmetric monoidal category (Section~\ref{sec:closed_sym_monoidal}) with all small limits and colimits. It is possible to define two types of operads (Definition~\ref{defn:operads}) in this setting, as well as algebras and modules over these operads. One type, called $\Sigma$-operad, is based on finite sets and incorporates symmetric group actions; the other type, called non-$\Sigma$ operad, is based on ordered sets and has no symmetric group contribution. (In this paper we use the term $\Omega$-operad for non-$\Sigma$ operad, where $\Omega=\text{O}$ for ``Ordered.'') Given an operad $\capO$, we are interested in the possibility of doing homotopy theory in the categories of $\capO$-algebras and $\capO$-modules, which in practice means putting a Quillen model structure on these categories of algebras and modules. In this setting, $\capO$-algebras are the same as left $\capO$-modules concentrated at $0$ (Section~\ref{sec:algebras_over_operads}).

Of course, to get started we need some kind of homotopy theoretic structure on $\C$ itself; this structure should mesh appropriately with the monoidal structure on $\C$. The basic assumption is this.

\begin{assumption}\label{BasicAssumption}
From now on in this paper we assume that $(\C,\tensor,\unit)$ is a closed symmetric monoidal category with all small limits and colimits, that $\C$ is a cofibrantly generated model category in which the generating cofibrations and acyclic cofibrations have small domains, and that with respect to this model structure $(\C,\tensor,\unit)$ is a monoidal model category.
\end{assumption}

Model categories provide a setting in which one can do homotopy theory, and in particular, provide a framework for constructing and calculating derived functors. The extra structure of a cofibrantly generated model category is described in \cite[Definition 2.2]{Schwede_Shipley}. When we refer to the extra structure of a monoidal model category, we are using \cite[Definition 3.1]{Schwede_Shipley}; an additional condition involving the unit is assumed in \cite[Definition 2.3]{Lewis_Mandell} which we will not require in this paper. A useful introduction to model categories is given in \cite{Dwyer_Spalinski}. See also the original articles \cite{Quillen, Quillen_rational} and the more recent \cite{Chacholski_Scherer, Goerss_Jardine, Hirschhorn, Hovey}. 

The main theorem for non-$\Sigma$ operads is this.

\begin{thm}\label{MainTheorem}
Let $\capO$ be an $\Omega$-operad in $\C$. Assume that $\C$ satisfies Basic Assumption~\ref{BasicAssumption} and in addition satisfies the monoid axiom (Definition~\ref{def:monoid_axiom}). Then the category of $\capO$-algebras and the category of left $\capO$-modules both have natural model category structures. The weak equivalences and fibrations in these model structures are inherited in an appropriate sense from the weak equivalences and fibrations in~$\C$.
\end{thm}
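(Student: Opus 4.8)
The plan is to transfer the model structure from $\C$ along the free-forgetful adjunction, using the standard technique for constructing model structures on categories of algebras over a monad. The key tool here is the monoid axiom, which plays the same role it does in Schwede–Shipley's work on monoids: it controls pushouts of free maps built from generating acyclic cofibrations.

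**Setting up the adjunction.** First I would recall that the forgetful functor $\functor{\U}{\AlgO}{\C}$ (and likewise for left $\capO$-modules) has a left adjoint, the free $\capO$-algebra functor, and that $\AlgO$ has all small limits and colimits. Since $\capO$-algebras are algebras over a monad on $\C$ (and left $\capO$-modules are as well), one defines a map in $\AlgO$ to be a \emph{weak equivalence} or \emph{fibration} precisely when its image under $\U$ is such in $\C$; cofibrations are then determined by the left lifting property. I would verify the easy model-category axioms (two-out-of-three, retracts, limits/colimits) directly, since these are inherited from $\C$ via $\U$.

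**The lifting and factorization axioms via small object argument.** The heart of the proof is producing the two factorizations, for which I would run the small object argument on $\AlgO$ with generating (acyclic) cofibrations taken to be the free maps $\F(\I)$ and $\F(\J)$, where $\I$ and $\J$ are the generating cofibrations and acyclic cofibrations of $\C$. The smallness hypothesis in Basic Assumption~\ref{BasicAssumption} guarantees the domains remain small after applying $\F$, so the argument terminates. The lifting axioms then reduce to showing that every relative $\F(\J)$-cell complex is a weak equivalence after applying $\U$; equivalently, that transfinite compositions of pushouts of free maps on generating acyclic cofibrations are sent to weak equivalences in $\C$.

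**The main obstacle.** The crux is analyzing a single pushout in $\AlgO$ of a free map $\F(\J) \rarrow \F(\J')$ along an arbitrary map, and showing its image in $\C$ is a weak equivalence. The difficulty is that the forgetful functor does \emph{not} commute with this pushout in a naive way; instead one must produce an explicit filtration of the underlying object of the pushout, whose successive layers are built from tensor powers of the generating acyclic cofibration $j \in \J$ smashed against objects of $\C$ assembled from $\capO$. This is exactly where the monoid axiom is used: each filtration layer is a pushout of a map in the class $(\J \tensor \C)$-cell, so the monoid axiom forces it to be a weak equivalence, and then two-out-of-three together with a colimit argument propagates this through the whole filtration and its transfinite composition. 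Constructing this filtration explicitly — identifying the pushout-product structure of each layer in terms of the operad and tracking the $\Omega$-operad (ordered, no $\Sigma$-action) combinatorics — is the genuinely technical step; once it is in place, the remaining verifications are formal.
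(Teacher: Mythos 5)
Your proposal follows essentially the same route as the paper: the model structure is created along the free--forgetful adjunction, the small object argument reduces everything to showing that (transfinite compositions of) pushouts of free maps on acyclic cofibrations are underlying weak equivalences, and this is handled by an explicit filtration of the pushout whose $t$-th layer is a pushout of $\capO_A[\mathbf{t}]\tensorhat\, i_*$ with $i_*\colon Q^t_{t-1}\rarrow Y^{\tensorhat t}$ an iterated pushout-product (hence an acyclic cofibration), so that the monoid axiom applies. The only detail worth flagging is that for left modules the underlying category is $\Seq$ rather than $\C$, so one also needs the small observation that the monoid axiom for $\C$ implies the monoid axiom for $(\Seq,\tensorhat,1)$.
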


\begin{rem}\label{rem:retracts}
Given any $\Omega$-operad $\capO$, there is an associated $\Sigma$-operad $\capO\cdot \Sigma$, such that algebras over $\capO\cdot\Sigma$ are the same as algebras over $\capO$. It follows easily from the above theorem that if $\capO'$ is a $\Sigma$-operad which is a retract of $\capO\cdot\Sigma$, then the category of algebras over $\capO'$ has a natural model category structure. 
\end{rem}

The above remark shows how to handle algebras over certain $\Sigma$-operads. We can do a lot better if $\C$ satisfies a strong cofibrancy condition. The main theorem for $\Sigma$-operads is this.

\begin{thm}\label{MainTheorem2}
Let $\capO$ be a $\Sigma$-operad in $\C$. Assume that $\C$ satisfies Basic Assumption~\ref{BasicAssumption} and in addition that every symmetric sequence (resp. symmetric array) in $\C$ is cofibrant in the projective model structure. Then the category of $\capO$-algebras (resp. left $\capO$-modules) has a natural model category structure. The weak equivalences and fibrations in these model structures are inherited in an appropriate sense from the weak equivalences and fibrations in $\C$.
\end{thm}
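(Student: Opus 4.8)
The plan is to produce both model structures by transfer (lifting) along the relevant free--forgetful adjunctions, isolating the single hard point as a statement about pushouts of free maps. Write $\capO\circ(-)\colon\C\rightleftarrows\AlgO\colon\U$ for the free $\capO$-algebra adjunction, and let $\capO\circ(-)\colon\SymSeq\rightleftarrows\LtO\colon\U$ be the analogous free left-module adjunction built from the circle product. First I would declare a map of $\capO$-algebras (resp.\ left $\capO$-modules) to be a weak equivalence or fibration precisely when $\U$ carries it to one in $\C$ (resp.\ in the projective model structure on symmetric sequences), and take the cofibrations to be the maps with the left lifting property against the acyclic fibrations. The candidate generating cofibrations and generating acyclic cofibrations are the images under $\capO\circ(-)$ of the generating (acyclic) cofibrations of the base---that is, $\capO\circ\I$ and $\capO\circ\J$ in the algebra case, where $\I$ and $\J$ are the generating (acyclic) cofibrations of $\C$. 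I would then appeal to the standard lifting criterion for cofibrantly generated model categories (Kan; Schwede--Shipley): it suffices to verify that $\capO\circ\I$ and $\capO\circ\J$ permit the small object argument, and that every relative $(\capO\circ\J)$-cell complex is a weak equivalence.

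The smallness condition is routine: the domains of $\I$ and $\J$ are small by Basic Assumption~\ref{BasicAssumption}, the left adjoint $\capO\circ(-)$ carries small objects to small objects, and $\U$ preserves filtered colimits, so $\capO\circ\I$ and $\capO\circ\J$ have small domains. The content of the theorem is the acyclicity condition, and this is where the cofibrancy hypothesis is used. I would reduce it, in the usual way, to analyzing a single pushout: given $j\in\J$ and a structure map $\capO\circ A\rarrow X$, form the pushout $X\rarrow Y$ of $\capO\circ A\rarrow\capO\circ B$ along it, and show that $\U(X\rarrow Y)$ is a weak equivalence. The essential tool is the canonical filtration $X=Y_0\rarrow Y_1\rarrow Y_2\rarrow\cdots$ of the underlying object of $Y$, with $Y\iso\colim_t Y_t$, in which each stage $Y_{t-1}\rarrow Y_t$ is a pushout in $\C$ of a map assembled from $\capO$, the $t$-fold iterated pushout-product of $j$, and the symmetric group $\Sigma_t$; concretely the attaching map has source of the form $\capO[t]\tensor_{\Sigma_t}(-)$ applied to a pushout-product corner of $j$.

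The main obstacle is showing that each filtration stage $Y_{t-1}\rarrow Y_t$ is an acyclic cofibration in $\C$. The $t$-fold iterated pushout-product of the acyclic cofibration $j$ is again an acyclic cofibration by the monoidal model category axioms; the difficulty is carrying this property through the $\Sigma_t$-equivariant tensor $\capO[t]\tensor_{\Sigma_t}(-)$, whose homotopical behavior is delicate because the $\Sigma_t$-quotient need not preserve weak equivalences in general. This is exactly the role of the hypothesis that \emph{every} symmetric sequence is cofibrant: it forces the $\Sigma_t$-objects in sight to be cofibrant, so that $\capO[t]\tensor_{\Sigma_t}(-)$ preserves the acyclic cofibration produced by the pushout-product, with no recourse to freeness of the source or to a monoid-axiom argument. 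I would establish this through an equivariant pushout-product estimate, using that projective cofibrations of $\Sigma_t$-objects are suitably preserved. Granting it, each $Y_{t-1}\rarrow Y_t$ is an acyclic cofibration, hence so is the transfinite composite $X\rarrow Y$; assembling arbitrary pushouts and transfinite compositions then yields the acyclicity condition. The left-module statement is proved identically once symmetric sequences are replaced by symmetric arrays and the tensor by the circle product, with the hypothesis that every symmetric array is cofibrant playing precisely the same part.
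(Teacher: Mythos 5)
Your overall strategy is the paper's: transfer along the free--forgetful adjunction, reduce to the Schwede--Shipley lifting criterion, and verify the acyclicity condition by filtering the pushout of a free map in the underlying category. There is, however, one concrete gap in the key step. You assert that the attaching map at filtration stage $t$ has source of the form $\capO[\mathbf{t}]\tensor_{\Sigma_t}(-)$ applied to the iterated pushout-product corner of $j$. That describes the filtration of the free object $\capO\circ B$ relative to $\capO\circ A$; it is not the filtration of the pushout $X\amalg_{(\capO\circ A)}(\capO\circ B)$ along an arbitrary attaching map $\capO\circ A\rarrow X$. The correct attaching object is $\capO_X[\mathbf{t}]$, where $\capO_X$ is the enveloping symmetric sequence (resp.\ symmetric array) of $X$, constructed as a reflexive coequalizer whose terms are of the form $\coprod_{p\geq 0}\capO[\mathbf{p}\boldsymbol{+}\mathbf{t}]\tensor_{\Sigma_p}X^{\tensor p}$; establishing the coproduct decomposition $X\amalg(\capO\circ B)\iso\capO_X\circ(B)$ and then verifying that the filtration really computes the pushout is the bulk of the technical work in the paper's argument (Propositions \ref{prop:coproduct_modules} and \ref{prop:small_arg_pushout_modules}).

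This is not merely a notational slip: it is exactly why the theorem hypothesizes that \emph{every} symmetric sequence (resp.\ symmetric array) be cofibrant, rather than the a priori weaker condition that $\capO$ itself be cofibrant. The object $\capO_X$ is a colimit built from $\capO$ and $X$, and there is no reason for it to inherit cofibrancy from $\capO$; the blanket hypothesis is what guarantees that $\capO_X[\mathbf{t}]$ is projectively cofibrant as a $\Sigma_t$-object, after which your equivariant pushout-product argument (Theorem \ref{thm:well_behaved_tensor} combined with Proposition \ref{prop:punctured_cube_calculation}) goes through verbatim. Once the filtration is corrected, the rest of your outline matches the paper's proof; note also that the paper economizes by deducing the algebra case from left modules concentrated at $0$ rather than running a parallel argument in $\C$.
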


In setting up the machinery for Theorems \ref{MainTheorem} and \ref{MainTheorem2}, we work with projective model structures on the diagram category of \emph{(symmetric) sequences} in $\C$ (Definition~\ref{def:symmetric_sequence}) and on the diagram category of \emph{(symmetric) arrays} in $\C$ (Definition~\ref{def:symmetric_array}).

\subsection{Some examples of interest}
The hypotheses of these theorems may seem restrictive, but in fact they allow, especially in the case of Theorem~\ref{MainTheorem}, for many interesting examples including the case $(\sSet,\times,*)$ of simplicial sets \cite{Dwyer_Henn, Gabriel_Zisman, Goerss_Jardine, May_simplicial}, the case $(\Chaincx_\unit,\tensor,\unit)$ of unbounded chain complexes over a commutative ring with unit \cite{Hovey, MacLane_homology}, and the case $(\Spectra,\wedge,S)$ of symmetric spectra \cite{Hovey_Shipley_Smith, Schwede_book_project}. In a related paper \cite{Harper_Spectra}, we improve Theorem \ref{MainTheorem} to $\Sigma$-operads for the case $(\Spectra,\wedge,S)$ of symmetric spectra.

\subsection{Relationship to previous work}

One of the theorems of Schwede and Shipley~\cite{Schwede_Shipley} is that the category of monoids in $(\C,\tensor,\unit)$ has a natural model category structure, provided that the \emph{monoid axiom} (Definition~\ref{def:monoid_axiom}) is satisfied. Theorem~\ref{MainTheorem} improves this result to algebras and left modules over any $\Omega$-operad.

One of the theorems of Hinich~\cite{Hinich} is that for unbounded chain complexes over a field of characteristic zero, the category of algebras over any $\Sigma$-operad has a natural model category structure. Theorem~\ref{MainTheorem2} improves this result to the category of left modules, and provides a simplified conceptual proof of Hinich's original result. In this rational case our theorem is this.

\begin{thm}\label{thm:chain_complexes_char_zero}
Let $\unit$ be a field of characteristic zero and let $(\Chaincx_\unit,\tensor,\unit)$ be the closed symmetric monoidal category of unbounded chain complexes over $\unit$. Let $\capO$ be any $\Sigma$-operad or $\Omega$-operad. Then the category of $\capO$-algebras (resp. left $\capO$-modules) has a natural model category structure.  The weak equivalences are the homology isomorphisms (resp. objectwise homology isomorphisms) and the fibrations are the dimensionwise surjections (resp. objectwise dimensionwise surjections).
\end{thm}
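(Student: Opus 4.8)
The plan is to deduce this theorem directly from Theorems~\ref{MainTheorem} and~\ref{MainTheorem2}, treating the $\Omega$-operad and $\Sigma$-operad cases separately; the only genuinely new input is a cofibrancy computation special to characteristic zero. First I would record that $(\Chaincx_\unit,\tensor,\unit)$ satisfies Basic Assumption~\ref{BasicAssumption}. It is a closed symmetric monoidal category with all small limits and colimits, and the projective model structure---with weak equivalences the homology isomorphisms and fibrations the dimensionwise surjections---is a standard cofibrantly generated model structure whose generating cofibrations $S^{n-1}\rarrow D^n$ and generating acyclic cofibrations $0\rarrow D^n$ have finitely generated, hence small, domains. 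The pushout-product axiom for the tensor product of chain complexes makes it a monoidal model category. Since the prescribed weak equivalences and fibrations on algebras and modules agree with those asserted in the statement, once Basic Assumption~\ref{BasicAssumption} is in hand it remains only to supply the extra hypothesis required in each case.

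For an $\Omega$-operad I would invoke Theorem~\ref{MainTheorem}, so the remaining task is to verify the monoid axiom (Definition~\ref{def:monoid_axiom}). Over a field every object of $\Chaincx_\unit$ is flat, so $-\tensor X$ is exact and preserves homology isomorphisms for every complex $X$; in particular, if $j$ is an acyclic cofibration then $j\tensor X$ is a monomorphism and a homology isomorphism. Since the class of monic homology isomorphisms is closed under the pushouts and transfinite compositions appearing in the monoid axiom (by exactness of filtered colimits together with the five lemma applied to the long exact homology sequence), every map in the relevant cell class is a weak equivalence. This verification is essentially the one carried out in Schwede--Shipley~\cite{Schwede_Shipley}, and I note that it uses only that $\unit$ is a field, not that it has characteristic zero.

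For a $\Sigma$-operad I would instead invoke Theorem~\ref{MainTheorem2}, whose extra hypothesis is that every symmetric sequence (resp.\ symmetric array) is cofibrant in the projective model structure; this is the step where characteristic zero is essential and which I expect to be the main obstacle. A symmetric sequence amounts to a collection of chain complexes $X[n]$ carrying $\Sigma_n$-actions, that is, complexes of $\unit[\Sigma_n]$-modules, and since the projective model structure on symmetric sequences is detected levelwise it suffices to show that every complex of $\unit[\Sigma_n]$-modules is cofibrant. The key point is that by Maschke's theorem $\unit[\Sigma_n]$ is semisimple in characteristic zero, so every short exact sequence of $\unit[\Sigma_n]$-modules splits; consequently every acyclic complex of $\unit[\Sigma_n]$-modules is contractible. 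I would then use that cofibrancy in the projective model structure is equivalent to being K-projective (the hom-complex into every acyclic complex is acyclic), which is immediate once acyclic complexes are contractible, so every complex is cofibrant. The identical argument applies to symmetric arrays, whose entries carry actions of products $\Sigma_{n_1}\times\cdots\times\Sigma_{n_k}$ with group algebras $\unit[\Sigma_{n_1}]\tensor\cdots\tensor\unit[\Sigma_{n_k}]$ again semisimple over a characteristic-zero field. With the strong cofibrancy hypothesis verified, Theorem~\ref{MainTheorem2} applies and yields the asserted model structures, with weak equivalences and fibrations as stated.
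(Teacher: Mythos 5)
Your proposal is correct and follows essentially the same route as the paper: the theorem is deduced from Theorems~\ref{MainTheorem} and~\ref{MainTheorem2} by checking Basic Assumption~\ref{BasicAssumption}, the monoid axiom for the $\Omega$-operad case, and---via Maschke's theorem, exactly as in the remark following the theorem statement---the cofibrancy of every symmetric sequence and symmetric array for the $\Sigma$-operad case. (One minor imprecision: cofibrancy in the projective model structure on unbounded complexes requires degreewise projective terms in addition to the K-projectivity condition you cite, but over the semisimple ring $\unit[\Sigma_n]$ every module is projective, so your argument is unaffected.)
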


\begin{rem}
Let $G$ be a finite group. If $\unit$ is a field of characteristic zero, then every $\unit[G]$-module is projective. It follows that every symmetric sequence and every symmetric array in $\Chaincx_\unit$ is cofibrant.
\end{rem}

Another theorem of Hinich \cite{Hinich} is that for unbounded chain complexes over a commutative ring with unit, the category of algebras over any $\Sigma$-operad of the form $\capO\cdot\Sigma$ for some $\Omega$-operad $\capO$, has a natural model category structure. Theorem~\ref{MainTheorem} improves this result to the category of left modules. Our theorem is this.

\begin{thm}\label{thm:chain_complexes_ring}
Let $\unit$ be a commutative ring with unit and let $(\Chaincx_\unit,\tensor,\unit)$ be the closed symmetric monoidal category of unbounded chain complexes over $\unit$. Let $\capO$ be any $\Omega$-operad. Then the category of $\capO$-algebras (resp. left $\capO$-modules) has a natural model category structure.  The weak equivalences are the homology isomorphisms (resp. objectwise homology isomorphisms) and the fibrations are the dimensionwise surjections (resp. objectwise dimensionwise surjections).
\end{thm}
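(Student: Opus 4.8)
The plan is to deduce this statement directly from Theorem~\ref{MainTheorem}. Since $\capO$ is an $\Omega$-operad, it suffices to verify that the ambient category $(\Chaincx_\unit,\tensor,\unit)$ satisfies the two hypotheses of that theorem, namely Basic Assumption~\ref{BasicAssumption} and the monoid axiom (Definition~\ref{def:monoid_axiom}); the theorem then supplies the model structures on $\capO$-algebras and on left $\capO$-modules, with weak equivalences and fibrations inherited from $\C=\Chaincx_\unit$. The only remaining task is to make the phrase ``inherited in an appropriate sense'' concrete in this example.

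First I would recall the projective model structure on $\Chaincx_\unit$, in which the weak equivalences are the homology isomorphisms, the fibrations are the dimensionwise surjections, and the generating cofibrations and generating acyclic cofibrations may be taken to be the standard maps $S^{n-1}\rarrow D^n$ and $0\rarrow D^n$. This category is closed symmetric monoidal with all small limits and colimits, its model structure is cofibrantly generated with small domains, and it satisfies the pushout-product axiom, so $(\Chaincx_\unit,\tensor,\unit)$ is a monoidal model category; these are standard facts and together they establish Basic Assumption~\ref{BasicAssumption}. The substantive point, and the step I expect to be the main obstacle, is the monoid axiom. By a standard reduction it is enough to treat the generating acyclic cofibrations $0\rarrow D^n$ tensored with arbitrary $X\in\Chaincx_\unit$. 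The key computation is that $D^n\tensor X$ is contractible: the contracting homotopy $s$ witnessing that $\id_{D^n}$ is null yields, as $s\tensor\id_X$, a contracting homotopy on $D^n\tensor X$. Hence each map $0\rarrow D^n\tensor X$ is a homology isomorphism with contractible target. Because these maps have domain $0$, every pushout of one of them along a map $0\rarrow Z$ is the summand inclusion $Z\rarrow Z\oplus(D^n\tensor X)$, again a homology isomorphism; and such maps are closed under the transfinite compositions appearing in the monoid axiom, since a transfinite composite of summand inclusions with contractible complements is again a summand inclusion with contractible complement. This would verify the monoid axiom for $\Chaincx_\unit$ over an arbitrary commutative ring, in agreement with Schwede--Shipley~\cite{Schwede_Shipley}.

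With both hypotheses in hand, Theorem~\ref{MainTheorem} yields the asserted model structures, and it remains only to identify the inherited weak equivalences and fibrations explicitly. In the projective model structure the weak equivalences of $\Chaincx_\unit$ are precisely the homology isomorphisms and the fibrations are precisely the dimensionwise surjections, and the model structure on $\capO$-algebras is created by the forgetful functor to $\Chaincx_\unit$; thus a map of $\capO$-algebras is a weak equivalence (respectively a fibration) exactly when its underlying map of chain complexes is a homology isomorphism (respectively a dimensionwise surjection). For left $\capO$-modules the weak equivalences and fibrations are created objectwise by the forgetful functor to the underlying diagram category of chain complexes, which gives the objectwise homology isomorphisms and objectwise dimensionwise surjections of the statement, completing the verification.
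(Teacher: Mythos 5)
Your proposal is correct and follows exactly the route the paper intends: Theorem~\ref{thm:chain_complexes_ring} is stated as a direct consequence of Theorem~\ref{MainTheorem}, so the only content is checking Basic Assumption~\ref{BasicAssumption} and the monoid axiom for $(\Chaincx_\unit,\tensor,\unit)$, which the paper delegates to Schwede--Shipley and which your argument (reduction to the generating acyclic cofibrations $0\rarrow D^n$, contractibility of $D^n\tensor X$, and closure of summand inclusions with acyclic complement under pushout and transfinite composition) verifies correctly. The identification of the inherited weak equivalences and fibrations via the forgetful functors likewise matches the proofs of Theorems~\ref{MainTheorem} and~\ref{MainTheorem2} in Section~\ref{sec:proofs_for_modules}.
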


One of the theorems of Elmendorf and Mandell~\cite{Elmendorf_Mandell} is that the category of simplicial multifunctors from a small multicategory (enriched over simplicial sets) to the category of symmetric spectra has a natural simplicial model category structure. Their proof involves a filtration in the underlying category of certain pushouts of algebras. We have benefitted from their paper and our proofs of Theorems \ref{MainTheorem} and \ref{MainTheorem2} exploit similar filtrations (Section~\ref{sec:proofs_for_modules}).

The framework presented in this paper for doing homotopy theory in the categories of algebras and modules over an operad is largely influenced by Rezk \cite{Rezk}.

\subsection*{Acknowledgments}

The author would like to thank Bill Dwyer for his constant encouragement and invaluable help and advice. The author is grateful to Emmanuel Farjoun for a stimulating and enjoyable visit to Hebrew University of Jerusalem in spring 2006 and for his invitation which made this possible, and to Paul Goerss and Mike Mandell for helpful comments and suggestions at a Midwest Topology Seminar. Parts of this paper were completed while the author was a visiting researcher at the Thematic Program on Geometric Applications of Homotopy Theory at the Fields Institute for Mathematics, Toronto.

\section{Preliminaries on group actions}
\label{sec:preliminaries}
Here, for reference purposes, we collect certain basic properties of group actions and adjunctions involving group actions. Some of the statements we organize into propositions. Their proofs are exercises left to the reader.

\subsection{Closed symmetric monoidal categories}
\label{sec:closed_sym_monoidal}
By Basic Assumption~\ref{BasicAssumption}, $(\C,\tensor,\unit)$ is a closed symmetric monoidal category with all small limits and colimits. In particular, $\C$ has an initial object $\emptyset$ and a terminal object $*$. For a useful introduction to monoidal categories see \cite[VII]{MacLane_categories}, followed by \cite[VII.7]{MacLane_categories} for symmetric monoidal categories. By \emph{closed} we mean there exists a functor 
\begin{align*}
  \C^\op\times\C\rarrow \C, \quad\quad (Y,Z)\longmapsto \Map(Y,Z),
\end{align*}
which we call \emph{mapping object} (or cotensor object), which fits into isomorphisms 
\begin{align}
\label{eq:adjunction_C}
   \hom_\C(X\tensor Y,Z)\Iso \hom_\C(X,\Map(Y,Z)),
\end{align}
natural in $X,Y,Z$. 

\begin{rem}
This condition is stronger than only requiring each functor $\functor{-\tensor Y}{\C}{\C}$ to have a specified right adjoint $\functor{\Map(Y,-)}{\C}{\C}$.
\end{rem}

\subsection{Group actions and $G$-objects}

The closed symmetric monoidal structure on $\C$ induces a corresponding structure on certain diagram categories.

\begin{defn}
Let $G$ be a finite group. $\C^{G^\op}$ is the category with objects the functors $\functor{X}{G^\op}{\C}$ and morphisms their natural transformations. $\C^G$ is the category with objects the functors $\functor{X}{G}{\C}$ and morphisms their natural transformations. 
\end{defn}

The diagram category $\C^{G^\op}$ (resp. $\C^G$) is isomorphic to the category of objects in $\C$ with a specified right action of $G$ (resp. left action of $G$).

\begin{prop}\label{prop:group_shaped}
Let $G$ be a finite group. Then $(\C^{G^\op},\tensor,\unit)$ has a closed symmetric monoidal structure induced from the closed symmetric monoidal structure on $(\C,\tensor,\unit)$. In particular, there are isomorphisms
\begin{align*}
  \hom_{\C^{G^\op}}(X\tensor Y,Z)\Iso\hom_{\C^{G^\op}}(X,\Map(Y,Z))
\end{align*}
natural in $X,Y,Z$.
\end{prop}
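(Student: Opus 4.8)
The plan is to lift each piece of the closed symmetric monoidal structure of $\C$ to $\C^{G^\op}$ by equipping the underlying $\C$-level construction with an appropriate $G$-action, and to verify at each stage that the relevant structure morphisms of $\C$ are $G$-equivariant. Recall that an object of $\C^{G^\op}$ is an object of $\C$ together with a right $G$-action, and a morphism is a $G$-equivariant map of $\C$. The organizing principle I will use repeatedly is that, since the forgetful functor $\C^{G^\op}\rarrow\C$ is faithful and computes composition in $\C$, a diagram of equivariant maps commutes in $\C^{G^\op}$ if and only if it commutes in $\C$; hence every coherence condition for the lifted structure is inherited for free from the corresponding coherence in $\C$.

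First I would define the monoidal product: for right $G$-objects $X$ and $Y$, equip the underlying object $X\tensor Y$ with the diagonal right action, and take $\unit$ with the trivial action as the unit. This is functorial because the action maps are natural in $X$ and $Y$, and the associator, unitors, and symmetry isomorphism are taken to be the very same $\C$-morphisms; one checks these are $G$-equivariant for the diagonal actions, using naturality of the structure isomorphisms in $\C$ and the symmetry. By the organizing principle above, the pentagon, triangle, and hexagon identities then hold in $\C^{G^\op}$ because they hold in $\C$.

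The substantive point is the closed structure. For right $G$-objects $Y$ and $Z$, I would define $\Map(Y,Z)$ to be the $\C$-mapping object equipped with the conjugation action in which $g\in G$ acts through the action of $g^{-1}$ on $Y$ and the action of $g$ on $Z$; here it is essential that $G$ is a group, so that the contravariant variable $Y$ can be acted on via the inverse. With this action in place, I would take the natural bijection
\begin{align*}
  \hom_\C(X\tensor Y,Z)\Iso\hom_\C(X,\Map(Y,Z))
\end{align*}
supplied by \eqref{eq:adjunction_C} and verify that it carries equivariant maps to equivariant maps in both directions: a map $X\tensor Y\rarrow Z$ is equivariant for the diagonal action on the source exactly when its adjunct $X\rarrow\Map(Y,Z)$ is equivariant for the conjugation action on the target. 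This is a direct diagram chase from the definitions of the two actions together with naturality of \eqref{eq:adjunction_C} in all three variables.

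I expect the bookkeeping in this last verification to be the main obstacle, specifically tracking the appearance of $g$ versus $g^{-1}$ forced by the contravariance of $\Map$ in its first variable, and confirming that these combine so that equivariance on one side is equivalent to equivariance on the other. Once that equivalence is established, restricting \eqref{eq:adjunction_C} to the equivariant submaps on each side yields the desired isomorphism $\hom_{\C^{G^\op}}(X\tensor Y,Z)\iso\hom_{\C^{G^\op}}(X,\Map(Y,Z))$, and its naturality in $X$, $Y$, $Z$ is inherited immediately from the naturality of \eqref{eq:adjunction_C}.
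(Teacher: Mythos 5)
Your proof is correct and is precisely the argument the paper intends: the propositions of Section~\ref{sec:preliminaries} are explicitly left as exercises to the reader, and equipping $X\tensor Y$ with the diagonal right action and $\Map(Y,Z)$ with the conjugation action (acting by $g^{-1}$ on the contravariant variable $Y$ and by $g$ on $Z$) is the standard construction, with all coherence inherited through the faithful forgetful functor to $\C$. The verification that a map $X\tensor Y\rarrow Z$ is equivariant for the diagonal action exactly when its adjunct $X\rarrow\Map(Y,Z)$ is equivariant for the conjugation action is the routine check the paper has in mind, so there is nothing to compare against and no gap to report.
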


The proposition remains true when $\C^{G^\op}$ is replaced by $\C^G$. We usually leave such corresponding statements to the reader. 

\subsection{Copowers}

If $X$ is a finite set, denote by $|X|$ the number of elements in $X$. Let $X$ be a finite set and $A\in\C$. Recall from \cite[III.3]{MacLane_categories} the \emph{copower} $A\cdot X\in\C$ is defined by:
\begin{align*}
  A\cdot X:=\coprod\limits_{X}A,
\end{align*}
the coproduct in $\C$ of $|X|$ copies of $A$.

\begin{rem}
In the literature, copower is sometimes indicated by a tensor product symbol, but because several tensor products already appear in this paper, we are using the usual dot notation as in \cite{MacLane_categories}.
\end{rem}

Let $G$ be a finite group. When $\C=\sSet$ there are natural isomorphisms $A\cdot G\Iso A\times G$, when $\C=\Chaincx_\unit$ there are natural isomorphisms $A\cdot G\Iso A\tensor \unit[G]$, and when $\C=\Spectra$ there are natural isomorphisms $A\cdot G\Iso A\wedge G_{+}$. Since left Kan extensions may be calculated objectwise in terms of copowers \cite[X.4]{MacLane_categories}, the copower construction appears in several adjunctions below.

\subsection{$G$-orbits, $G$-fixed points, and related adjunctions}

\begin{defn}
Let $G$ be a finite group. If $\functor{Y}{G^\op\times G}{\C}$ and $\functor{Z}{G\times G^\op}{\C}$ are functors, then $Y_G\in\C$ and $Z^G\in\C$ are defined by
\begin{align*}
  Y_G := \coend Y,\quad\quad Z^G:=\catend Z.
\end{align*}
\end{defn}

The universal properties satisfied by these coends and ends are convenient when working with $Y_G$ and $Z^G$, but the reader may take the following calculations as definitions. There are natural isomorphisms,
\begin{align*}
  Y_G&\Iso\colim(
  \xymatrix{
  G^\op\ar[r]^-{\mathrm{diag}} & 
  G^\op\times G^\op\Iso G^\op\times G\ar[r]^-{Y} & \C
  }),\\
  Z^G&\Iso\lim(
  \xymatrix{
  G^\op\ar[r]^-{\mathrm{diag}} & 
  G^\op\times G^\op\Iso G\times G^\op\ar[r]^-{Z} & \C
  }).
\end{align*}

\begin{prop}\label{prop:diagram_adjunctions}
Let $G$ be a finite group, $H\subset G$ a subgroup, and $\function{l}{H}{G}$ the inclusion of groups. Let $G_1,G_2$ be finite groups and $A_2\in\C^{G_2^\op}$. There are adjunctions
\begin{align}\label{eq:useful_adjunctions_diagrams}
\xymatrix{
  \C\ar@<0.5ex>[r] & \C^{H^\op}\ar@<0.5ex>[l]^-{\lim}
  \ar@<0.5ex>[r]^{-\cdot_H G} & 
  \C^{G^\op}\ar@<0.5ex>[l]^-{l^*}
}, \quad\quad
\xymatrix{
   \C^{G_1^\op}\ar@<0.5ex>[rr]^-{-\tensor A_2}&&
   \C^{(G_1\times G_2)^\op}\ar@<0.5ex>[ll]^-{\Map(A_2,-)^{G_2}}
},
\end{align}
with left adjoints on top. In particular, there are isomorphisms
\begin{align*}
  \hom_{\C^{G^\op}}(A\cdot_H G,B)
  &\Iso\hom_{\C^{H^\op}}(A,B),\\
  \hom_{\C^{G^\op}}(A\cdot(H\backslash G),B)
  &\Iso\hom_{\C}(A,B^H),\\
  \hom_{\C^{G^\op}}(A\cdot G,B)
  &\Iso\hom_{\C}(A,B),\\
  \hom_{\C^{(G_1\times G_2)^\op}}
  (A_1\tensor A_2,X)&\Iso\hom_{\C^{G_1^\op}}(A_1,\Map(A_2,X)^{G_2}),
\end{align*}
natural in $A,B$ and $A_1,X$.
\end{prop}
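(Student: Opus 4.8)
The plan is to establish the two displayed adjunctions in~\eqref{eq:useful_adjunctions_diagrams} first and then read off the four $\hom$-isomorphisms as composites and special cases; each adjunction is assembled from two standard pieces. For the left-hand chain, I would identify the unlabeled top functor $\C\rarrow\C^{H^\op}$ as the constant-diagram functor $\Delta$ that equips an object with the trivial $H$-action. Its right adjoint is the limit over $H$, i.e.\ the $H$-fixed points $B\mapsto B^H$, and the adjunction $\Delta\dashv\lim$ is the universal property of the limit over a small indexing category. For the right-hand functor I would identify $-\cdot_H G$ with the left Kan extension $\Lan$ along $l$; the objectwise coend/copower formula for left Kan extensions (as used in the copower discussion above, following~\cite[X.4]{MacLane_categories}) computes it as the coend $A\cdot_H G$, and its right adjoint is the restriction $l^*$.

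The universal property of the left Kan extension is exactly the first isomorphism $\hom_{\C^{G^\op}}(A\cdot_H G,B)\iso\hom_{\C^{H^\op}}(A,B)$, where $B$ on the right denotes $l^*B$. Composing the two adjunctions gives $\hom_{\C^{G^\op}}((\Delta A)\cdot_H G,B)\iso\hom_\C(A,(l^*B)^H)=\hom_\C(A,B^H)$; the only point to verify is that applying induction to a trivially-acted object collapses the defining coend to the copower indexed by the orbit set, so that $(\Delta A)\cdot_H G\iso A\cdot(H\backslash G)$, which is the second isomorphism. The third isomorphism is the special case $H=1$, where $\C^{H^\op}=\C$, $\lim=\id$, and $A\cdot(1\backslash G)=A\cdot G$.

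For the right-hand adjunction I would build on the closed symmetric monoidal structure on $\C^{(G_1\times G_2)^\op}$ from Proposition~\ref{prop:group_shaped}. Viewing $A_1$ with trivial $G_2$-action and $A_2$ with trivial $G_1$-action as objects of $\C^{(G_1\times G_2)^\op}$, their tensor carries the external product action, so the monoidal adjunction for $G_1\times G_2$ gives $\hom_{\C^{(G_1\times G_2)^\op}}(A_1\tensor A_2,X)\iso\hom_{\C^{(G_1\times G_2)^\op}}(A_1,\Map(A_2,X))$. Since $A_1$ carries the trivial $G_2$-action, the trivial-action/fixed-point adjunction applied to $G_2$ relative to the residual $G_1$-action converts a $(G_1\times G_2)$-equivariant map out of $A_1$ into a $G_1$-equivariant map into the $G_2$-fixed points, yielding $\hom_{\C^{G_1^\op}}(A_1,\Map(A_2,X)^{G_2})$.

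The main obstacle is bookkeeping rather than anything conceptual: one must track which of $G_1,G_2$ acts on each factor, confirm that the action on $A_1\tensor A_2$ is the intended product $(G_1\times G_2)$-action, and check that the residual $G_1$-action makes $\Map(A_2,-)^{G_2}$ land in $\C^{G_1^\op}$. Naturality in each variable is then inherited from the naturality of the underlying adjunction isomorphisms, and verifying the identification $(\Delta A)\cdot_H G\iso A\cdot(H\backslash G)$ at the level of coends completes the argument.
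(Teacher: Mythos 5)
The paper explicitly leaves the proofs of the propositions in Section~\ref{sec:preliminaries} as exercises, so there is no argument of the author's to compare against; your proof is correct and is surely the intended one. The decomposition into the three standard adjunctions --- $\Delta\dashv\lim$ (trivial action against $H$-fixed points), $\Lan_{l^\op}\dashv l^*$ with the coend/copower formula identifying $\Lan_{l^\op}A$ with $A\cdot_H G$, and the closed monoidal adjunction of Proposition~\ref{prop:group_shaped} followed by the relative trivial-action/fixed-point adjunction for $G_2$ --- together with the identification $(\Delta A)\cdot_H G\iso A\cdot(H\backslash G)$ and the specialization $H=1$, accounts for everything in the statement.
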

\begin{rem}
The restriction functor $l^*$ is sometimes dropped from the notation, as in the natural isomorphisms in Proposition~\ref{prop:diagram_adjunctions}.
\end{rem}

\section{Sequences and symmetric sequences}
\label{sec:symmetric_sequences}

In preparation for defining operads, we consider sequences and symmetric sequences of objects in $\C$. We introduce a symmetric monoidal structure $\tensor$ on $\SymSeq$, and a symmetric monoidal structure $\hat{\tensor}$ on $\Seq$. Both of these are relatively simple; $\tensor$ is a form of the symmetric monoidal structure that is used in the construction of symmetric spectra \cite{Hovey_spectra, Hovey_Shipley_Smith}, while $\hat{\tensor}$ is defined in a way that is very similar to the definition of the graded tensor product of chain complexes.
These monoidal products possess appropriate adjoints, which can be interpreted as mapping objects. For instance, there are objects $\Map^\tensor(B,C)$ and $\Map^{\hat{\tensor}}(Y,Z)$ which fit into isomorphisms
\begin{align*}
  \hom(A\tensor B,C)&\Iso\hom(A,\Map^\tensor(B,C)),\\
  \hom(X\hat{\tensor}Y,Z)&\Iso\hom(X,\Map^{\hat{\tensor}}(Y,Z)),
\end{align*}
natural in the symmetric sequences $A,B,C$ and the sequences $X,Y,Z$. The material in this section is largely influenced by \cite{Rezk}.

\subsection{Sequences and symmetric sequences}
Define the sets $\mathbf{n}:=\{1,\dots,n\}$ for each $n\geq 0$, where $\mathbf{0}:=\emptyset$ denotes the empty set. When regarded as a totally ordered set, $\mathbf{n}$ is given its natural ordering.

\begin{defn}\label{def:symmetric_sequence}
Let $n\geq 0$.
\begin{itemize}
\item $\Sigma$ is the category of finite sets and their bijections. $\Omega$ is the category of totally ordered finite sets and their order preserving bijections.
\item A \emph{symmetric sequence} in $\C$ is a functor $\functor{A}{\Sigma^{\op}}{\C}$. A \emph{sequence} in $\C$ is a functor $\functor{X}{\Omega^\op}{\C}$. $\SymSeq:=\C^{\Sigma^{\op}}$ is the category of symmetric sequences in $\C$ and their natural transformations. $\Seq:=\C^{\Omega^{\op}}$ is the category of sequences in $\C$ and their natural transformations.
\item A (symmetric) sequence $A$ is \emph{concentrated at $n$} if $A[\mathbf{s}]=\emptyset$ for all $s\neq n$; the initial object in $\C$ is denoted by $\emptyset$.
\end{itemize}
\end{defn}

\subsection{Small skeletons}
The indexing categories for symmetric sequences and sequences are not small, but they have small skeletons, which will be useful for calculations.

\begin{defn}\quad
\begin{itemize}
\item $\Sigma_n$ is the category with exactly one object $\mathbf{n}$ and morphisms the bijections of sets. $\Omega_n$ is the category with exactly one object $\mathbf{n}$ and morphisms the identity map.
\item $\Sigma'\subsetof\Sigma$ is the subcategory with objects the sets $\mathbf{n}$ for $n\geq 0$ and morphisms the bijections of sets. $\Omega'\subsetof\Omega$ is the subcategory with objects the totally ordered sets $\mathbf{n}$ for $n\geq 0$ and morphisms the identity maps.
\end{itemize}
\end{defn}

Note that $\Sigma'$ is a small skeleton of $\Sigma$ and $\Omega'$ is a small skeleton of $\Omega$.

\subsection{Tensor products for (symmetric) sequences}

Sequences and symmetric sequences have naturally occuring tensor products.

\begin{defn}\label{def:tensor_products}
Let $A_1,\dotsc,A_t$ be symmetric sequences and let $X_1,\dotsc,X_t$ be sequences. The \emph{tensor products} $A_1\tensor\dotsb\tensor A_t\in\SymSeq$ and $X_1\hat{\tensor}\dotsb\hat{\tensor}X_t\in\Seq$ are the left Kan extensions of objectwise tensor along coproduct of sets,
\begin{align*}
\xymatrix{
  (\Sigma^{\op})^{\times t}
  \ar[rr]^-{A_1\times\dotsb\times A_t}\ar[d]^{\coprod} & &
  \C^{\times t}\ar[r]^-{\tensor} & \C \\
  \Sigma^{\op}\ar[rrr]^{A_1\tensor\dotsb\tensor 
  A_t}_{\text{left Kan extension}} & & & \C,
} &&
\xymatrix{
  (\Omega^{\op})^{\times t}
  \ar[rr]^-{X_1\times\dotsb\times X_t}\ar[d]^{\coprod} & &
  \C^{\times t}\ar[r]^-{\tensor} & \C \\
  \Omega^{\op}\ar[rrr]^{X_1\hat{\tensor}\dotsb\hat{\tensor} 
  X_t}_{\text{left Kan extension}} & & & \C.
}
\end{align*}
\end{defn}

This construction is where the distinction between $\tensor$ and $\tensorhat$ really arises. The following calculation is an exercise left to the reader.

\begin{prop}\label{prop:tensor_calculation}
Let $A_1,\dotsc,A_t$ be symmetric sequences and $S\in\Sigma$, with $s:=|S|$. Let $X_1,\dotsc,X_t$ be sequences and $M\in\Omega$, with $m:=|M|$. There are natural isomorphisms,
\begin{align}
  (A_1\tensor\dotsb\tensor A_t)[S]&\Iso\ 
  \coprod_{\substack{\function{\pi}{S}{\mathbf{t}}\\ \text{in $\Set$}}}
  A_1[\pi^{-1}(1)]\tensor\dotsb\tensor
  A_t[\pi^{-1}(t)],\label{eq:tensors0}\\
  &\Iso
  \coprod_{s_1+\dotsb +s_t=s}A_1[\mathbf{s_1}]\tensor\dotsb\tensor 
  A_t[\mathbf{s_t}]\underset{{\Sigma_{s_1}\times\dotsb\times
  \Sigma_{s_t}}}{\cdot}\Sigma_{s},\label{eq:tensors1}\\
  (X_1\hat{\tensor}\dotsb\hat{\tensor} X_t)[M]&\Iso\ 
  \coprod_{\substack{\function{\pi}{M}{\mathbf{t}}\\ \text{in $\OrdSet$}}}
  X_1[\pi^{-1}(1)]\tensor\dotsb\tensor
  X_t[\pi^{-1}(t)],\label{eq:tensors2}\\
  \label{eq:tensors3}
  &\Iso
  \coprod_{m_1+\dotsb +m_t=m}X_1[\mathbf{m_1}]\tensor\dotsb\tensor 
  X_t[\mathbf{m_t}],
\end{align}
\end{prop}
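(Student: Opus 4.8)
The plan is to compute each tensor product directly from its definition as a pointwise left Kan extension, exploiting the fact that $\Sigma$ and $\Omega$ are groupoids (indeed, on its skeleton $\Omega$ has only identity morphisms). First I would write down the standard pointwise formula: evaluating $A_1\tensor\dotsb\tensor A_t$ at $S\in\Sigma$ gives the colimit over the comma category $(\coprod\downarrow S)$ of the composite $\tensor\circ(A_1\times\dotsb\times A_t)\circ\pr$, and similarly for $X_1\hat{\tensor}\dotsb\hat{\tensor}X_t$ at $M\in\Omega$. An object of this comma category is a tuple $(S_1,\dotsc,S_t)$ together with a bijection $S\iso S_1\sqcup\dotsb\sqcup S_t$ in $\Sigma$, which is precisely the data of a decomposition of $S$ into $t$ labelled blocks.

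Next I would analyze the comma category. Because every morphism of $\Sigma$ is an isomorphism, $(\coprod\downarrow S)$ is a groupoid; I would check that each of its objects has trivial automorphism group (an automorphism is a tuple of block bijections whose disjoint union is the identity, hence each block bijection is the identity), and that two objects are isomorphic exactly when they induce the same function $\pi\colon S\to\mathbf{t}$, sending $x$ to $i$ whenever $x$ lands in the $i$-th block. Choosing the canonical representative $S_i=\pi^{-1}(i)$ in each isomorphism class, the colimit over this groupoid collapses to the coproduct $\coprod_\pi A_1[\pi^{-1}(1)]\tensor\dotsb\tensor A_t[\pi^{-1}(t)]$, which is \eqref{eq:tensors0}.

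To pass from \eqref{eq:tensors0} to \eqref{eq:tensors1} I would evaluate at $S=\mathbf{s}$ and regroup the coproduct according to the cardinality profile $s_i:=|\pi^{-1}(i)|$. For a fixed profile with $s_1+\dotsb+s_t=s$, the functions $\pi$ realizing it form a single $\Sigma_s$-orbit under precomposition with the action on $\mathbf{s}$; fixing the standard profile $\mathbf{s}=\mathbf{s_1}\sqcup\dotsb\sqcup\mathbf{s_t}$ of consecutive blocks, its stabilizer is the Young subgroup $\Sigma_{s_1}\times\dotsb\times\Sigma_{s_t}$, which acts on $A_1[\mathbf{s_1}]\tensor\dotsb\tensor A_t[\mathbf{s_t}]$. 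I would then identify the resulting coproduct of block tensors, assembled with its $\Sigma_s$-action, with the induced object obtained by applying the copower left adjoint $(-)\cdot_{\Sigma_{s_1}\times\dotsb\times\Sigma_{s_t}}\Sigma_s$ from Proposition~\ref{prop:diagram_adjunctions}; this yields \eqref{eq:tensors1}. The ordered case is formally identical but simpler: since the only order-preserving self-bijection of a finite totally ordered set is the identity, the comma category for $X_1\hat{\tensor}\dotsb\hat{\tensor}X_t$ is discrete, its objects being the order-preserving functions $\pi\colon M\to\mathbf{t}$ (equivalently, ordered concatenations $M=M_1\sqcup\dotsb\sqcup M_t$), which gives \eqref{eq:tensors2}; and since such a $\pi$ is determined by the cardinalities $m_i$ with $\pi^{-1}(i)$ canonically order-isomorphic to $\mathbf{m_i}$, regrouping yields \eqref{eq:tensors3} with no induction required.

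The main obstacle will be the bookkeeping around the opposite categories and the verification of $\Sigma_s$-equivariance in \eqref{eq:tensors1}: one must check that the action permuting the summands of \eqref{eq:tensors0} (coming from the functoriality of the symmetric sequence at $\mathbf{s}$) matches the action built into the copower, and that the block tensor $A_1[\mathbf{s_1}]\tensor\dotsb\tensor A_t[\mathbf{s_t}]$ carries precisely the Young-subgroup action for which the adjunction of Proposition~\ref{prop:diagram_adjunctions} applies. The reduction of a colimit over a groupoid with trivial automorphisms to a plain coproduct should be recorded carefully once and then reused in both the symmetric and ordered cases.
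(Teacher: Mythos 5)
The paper gives no proof of this proposition---it is explicitly ``an exercise left to the reader''---and your argument is the intended one: compute the left Kan extension pointwise as a colimit over the comma category, observe that this groupoid has trivial automorphism groups with isomorphism classes the functions $\pi\colon S\to\mathbf{t}$ (resp.\ order-preserving $\pi\colon M\to\mathbf{t}$), and regroup by fiber cardinalities to get the induced-copower form \eqref{eq:tensors1}. Your proof is correct, and the two points you flag (the $\op$-bookkeeping and matching the $\Sigma_s$-action on the orbit of $\pi$'s with the action built into $-\cdot_{\Sigma_{s_1}\times\dotsb\times\Sigma_{s_t}}\Sigma_s$) are exactly where the care is needed.
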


\begin{rem}
Giving a map of sets $\function{\pi}{S}{\mathbf{t}}$ is the same as giving an ordered partition $(I_1,\dotsc,I_t)$ of $S$. Whenever $\pi$ is not surjective, at least one $I_j$ will be the empty set $\mathbf{0}$.
\end{rem}

\subsection{Tensor powers}

It will be useful to extend the definition of tensor powers $A^{\tensor t}$ and $X^{\hat{\tensor}n}$ to situations in which the integers $t$ and $n$ are replaced, respectively, by a finite set $T$ or a finite ordered set $N$. The calculations in Proposition~\ref{prop:tensor_calculation} suggest how to proceed. We introduce here the suggestive bracket notation used in \cite{Rezk}.

\begin{defn} \label{def:tensor_powers}
Let $A$ be a symmetric sequence and $S,T\in\Sigma$. Let $X$ be a sequence and $M,N\in\Omega$.  The \emph{tensor powers} $A^{\tensor T}\in\SymSeq$ and $X^{\hat{\tensor} N}\in\Seq$ are defined objectwise by
\begin{align}
  \label{eq:tensor_powers}
  (A^{\tensor T})[S]:= A[S,T]:=
  &\coprod_{\substack{\function{\pi}{S}{T}\\ \text{in $\Set$}}}
  \tensor_{t\in T} A[\pi^{-1}(t)],\quad T\neq \emptyset\, ,\\
  \notag
  (A^{\tensor \emptyset})[S]:= A[S,\emptyset]:=
  &\coprod_{\substack{\function{\pi}{S}{\emptyset}\\ \text{in $\Set$}}}
  \unit,\\
  \notag
  (X^{\hat{\tensor} N})[M]:=X\langle M,N\rangle:=
  &\coprod_{\substack{\function{\pi}{M}{N}\\ \text{in $\OrdSet$}}}
  \tensor_{n\in N} X[\pi^{-1}(n)],\quad N\neq \emptyset,\\
  \notag
  (X^{\hat{\tensor} \emptyset})[M]:=X\langle M,\emptyset\rangle:=
  &\coprod_{\substack{\function{\pi}{M}{\emptyset}\\ \text{in $\OrdSet$}}}
  \unit.
\end{align}
Note that there are no functions $\function{\pi}{S}{\emptyset}$ in $\Set$  unless $S=\emptyset$, and similarly with $S$ replaced by $M$. We will use the abbreviations $A^{\tensor 0}:=A^{\tensor\emptyset}$ and $X^{\hat{\tensor} 0}:=X^{\hat{\tensor}\emptyset}$.
\end{defn}

\begin{rem}
We denote by $\Set$ the category of sets and their maps, and by $\OrdSet$ the category of totally ordered sets and their order preserving maps.
\end{rem}

The above constructions give functors
\begin{align*}
  \SymSeq\times\Sigma^{\op}\times\Sigma\rarrow\C,&\quad\quad
  (A,S,T)\longmapsto A[S,T],\\
  \Seq\times\Omega^{\op}\times\Omega\rarrow\C,&\quad\quad
  (X,M,N)\longmapsto X\langle M,N\rangle,\\
  \SymSeq\times\SymSeq\rarrow\SymSeq,&\quad\quad
  (A,B)\longmapsto A\tensor B,\\
  \Seq\times\Seq\rarrow\Seq,&\quad\quad
  (X,Y)\longmapsto X\hat{\tensor} Y.
\end{align*}

Observe that the unit for the tensor product $\tensor$ on $\SymSeq$ and the unit for the tensor product $\hat{\tensor}$ on $\Seq$, both denoted ``$1$'', are given by the same formula
\begin{align*}
  1[S] :=
  \left\{
    \begin{array}{rl}
    \unit,&\text{for $|S|=0$,}\\
    \emptyset,&\text{otherwise}.
    \end{array}
    \right.
\end{align*}
Note that $A^{\tensor\emptyset}=1=M^{\tensorhat\emptyset}$. The following calculations follow directly from Definition~\ref{def:tensor_powers}; similar calculations are true for sequences.

\begin{prop} 
Let $A,B$ be symmetric sequences. There are natural isomorphisms,
\begin{align*}
  &A\tensor 1\Iso A, & &A^{\tensor 0}\Iso A[-,\mathbf{0}]\Iso 1,
  & &A\tensor B\Iso B\tensor A\\
  &A\tensor \emptyset\Iso \emptyset, & &A^{\tensor 1}\Iso 
  A[-,\mathbf{1}]\Iso A,  
  & &(A^{\tensor t})[\mathbf{0}]\Iso A[\mathbf{0},\mathbf{t}]
  \Iso A[\mathbf{0}]^{\tensor t},\quad 
  t\geq 0.
\end{align*}
Here, $\emptyset$ denotes the initial object in the category of symmetric sequences.
\end{prop}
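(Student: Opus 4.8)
The plan is to verify each isomorphism objectwise, evaluating at an arbitrary $S\in\Sigma$ and reading off which summands in the coproduct formulas of Definition~\ref{def:tensor_powers} and Proposition~\ref{prop:tensor_calculation} actually contribute; naturality in $A$ (and $B$) and functoriality in $S$ will be inherited from the constructions, so the real work is bookkeeping about partition functions together with a few standard facts about $\tensor$ on $\C$. I would begin with the unit laws by counting functions. For $A\tensor 1\Iso A$, I apply the formula \eqref{eq:tensors0} with $t=2$, $A_1=A$, $A_2=1$: since $1[\pi^{-1}(2)]=\unit$ when $\pi^{-1}(2)=\emptyset$ and $1[\pi^{-1}(2)]=\emptyset$ otherwise, only the unique $\pi$ with $\pi^{-1}(1)=S$ survives, giving $A[S]\tensor\unit\Iso A[S]$. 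The isomorphisms $A^{\tensor 1}\Iso A[-,\mathbf{1}]\Iso A$ and $A^{\tensor 0}\Iso A[-,\mathbf{0}]\Iso 1$ then follow immediately from the bracket formulas of Definition~\ref{def:tensor_powers}: there is exactly one function $S\to\mathbf{1}$, with fiber $S$, and there is a function $S\to\emptyset$ only when $S=\emptyset$, namely the empty function, which reproduces precisely the defining formula for the unit $1$.

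Next, for $A\tensor\emptyset\Iso\emptyset$ (with $\emptyset$ the initial symmetric sequence) I would observe that $\emptyset[T]$ is the initial object of $\C$ for every $T$, so each summand of \eqref{eq:tensors0} has the form $A[\pi^{-1}(1)]\tensor\emptyset$. Because $\C$ is closed (Section~\ref{sec:closed_sym_monoidal}), each functor $-\tensor Y$ is a left adjoint and hence preserves colimits, in particular the initial object; thus every summand is initial in $\C$, and a coproduct of initial objects is initial. Therefore $(A\tensor\emptyset)[S]$ is initial for all $S$, which gives $A\tensor\emptyset\Iso\emptyset$. The last computation $(A^{\tensor t})[\mathbf{0}]\Iso A[\mathbf{0},\mathbf{t}]\Iso A[\mathbf{0}]^{\tensor t}$ is handled in the same spirit: for $t\geq 1$ the only function $\emptyset\to\mathbf{t}$ is the empty function, all of whose fibers are empty, so the single surviving summand is $\tensor_{i\in\mathbf{t}}A[\emptyset]\Iso A[\mathbf{0}]^{\tensor t}$, while the $t=0$ case reduces to the unit computation $A^{\tensor 0}\Iso 1$ already treated.

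Finally, for commutativity $A\tensor B\Iso B\tensor A$ I would combine the symmetry isomorphism of $\tensor$ on $\C$ with the involution on indexing data that swaps the two blocks of an ordered partition, i.e.\ the bijection sending $\function{\pi}{S}{\mathbf{2}}$ to its composite with the transposition $\tau$ of $\mathbf{2}$. Under $\pi\mapsto\tau\circ\pi$ the summand $A[\pi^{-1}(1)]\tensor B[\pi^{-1}(2)]$ of $A\tensor B$ is matched with the summand $B[\pi^{-1}(2)]\tensor A[\pi^{-1}(1)]$ of $B\tensor A$, and applying the swap isomorphism of $\C$ identifies the two. The main thing to check, and the only step that requires genuine care rather than a count of functions, is that all of these objectwise identifications are natural in $A,B$ and compatible with the $\Sigma$-action on $S$, equivalently that they are morphisms of symmetric sequences; this is routine since each identification is assembled from the universal property of the left Kan extension and the coherence isomorphisms of the symmetric monoidal structure on $\C$, but in the symmetry case one must verify that the chosen reindexing $\pi\mapsto\tau\circ\pi$ intertwines the residual symmetric-group actions coherently.
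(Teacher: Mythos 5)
Your proposal is correct and follows exactly the route the paper intends: the paper offers no written proof, stating only that these calculations ``follow directly from Definition~\ref{def:tensor_powers},'' and your objectwise bookkeeping with the partition formulas \eqref{eq:tensors0} and \eqref{eq:tensor_powers} is precisely that direct verification. The only cosmetic point is that to kill a summand of the form $A[\pi^{-1}(1)]\tensor\emptyset$ you should invoke the symmetry of $\tensor$ (or cocontinuity of $X\tensor-$) rather than of $-\tensor Y$, since the initial object sits in the second variable; this changes nothing of substance.
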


\subsection{Mapping objects for (symmetric) sequences}

Let $B,C$ be symmetric sequences and $T\in\Sigma$. Let $Y,Z$ be sequences and $N\in\Omega$. There are functors
\begin{align*}
  \Sigma\times\Sigma^\op\rarrow\C,&\quad\quad
  (S,S')\longmapsto \Map(B[S],C[T\amalg S']),\\
  \Omega\times\Omega^\op\rarrow\C,&\quad\quad
  (M,M')\longmapsto \Map(Y[M],Z[N\amalg M']),
\end{align*}
which are useful for defining the mapping objects of $(\SymSeq,\tensor,1)$ and $(\Seq,\hat{\tensor},1)$.

\begin{defn}\label{def:map_tensor_object} Let $B,C$ be symmetric sequences and $T\in\Sigma$. Let $Y,Z$ be sequences and $N\in\Omega$. The \emph{mapping objects} $\Map^\tensor(B,C)\in\SymSeq$ and $\Map^{\hat{\tensor}}(Y,Z)\in\Seq$ are defined objectwise by the ends
\begin{align*}
  \Map^\tensor(B,C)[T]\ &:=\ \Map(B,C[T\amalg -])^\Sigma,\\
  \Map^{\hat{\tensor}}(Y,Z)[N]\ &:=\ \Map(Y,Z[N\amalg -])^\Omega.
\end{align*}
\end{defn}

Hence $\Map^\tensor(B,C)$ satisfies objectwise the universal property
\begin{align}\label{eq:mapping_tensor_universal}
\xymatrix{
   & & \Map(B[S],C[T\amalg S])
   \ar[d]^-{(\id,(\id\amalg\zeta)^*)} & S \\
   \cdot\ar@(u,l)[urr]^-{f[S]}
   \ar@(d,l)[drr]_-{f[S']}
   \ar@{.>}[r]^-{\bar{f}}_-{\existsunique}
   & \Map^\tensor(B,C)[T]\ar@/^1pc/[ur]_-{\tau[S]}
  \ar@/_1pc/[dr]^-{\tau[S']}
  & \Map(B[S],C[T\amalg S'])  \\
   & & \Map(B[S'],C[T\amalg S'])\ar[u]_-{(\zeta^*,\id)} 
   & S'\ar[uu]_{\zeta}   
}
\end{align}
that each wedge $f$ factors uniquely through the terminal wedge $\tau$ of $\Map^\tensor(B,C)[T]$. The mapping objects $\Map^{\hat{\tensor}}(Y,Z)$ satisfy objectwise a similar universal property.

\begin{prop}\label{prop:calc_map_tensor_object}
Let $B,C$ be symmetric sequences and $T\in\Sigma$, with $t:=|T|$. Let $Y,Z$ be sequences and $N\in\Omega$, with $n:=|N|$. There are natural isomorphisms,
\begin{align}
  \label{eq:mapping_tensor_calc_objectwise}
  \Map^\tensor(B,C)[\mathbf{t}]
  &\Iso\prod_{s\geq 0}\Map(B[\mathbf{s}],C[\mathbf{t}
  \boldsymbol{+}\mathbf{s}])^{\Sigma_s},\\
  \Map^{\hat{\tensor}}(Y,Z)[\mathbf{n}]
  &\Iso\prod_{m\geq 0}\Map(Y[\mathbf{m}],Z[\mathbf{n}
  \boldsymbol{+}\mathbf{m}]).
\end{align}
\end{prop}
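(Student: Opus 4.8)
The plan is to compute the defining ends over the small skeleta $\Sigma'$ and $\Omega'$, decompose these skeleta into connected components, and recognize each resulting factor as a fixed-point object. First I would use that $\Sigma'\subset\Sigma$ (resp. $\Omega'\subset\Omega$) is a small skeleton, hence an equivalence of categories, so the end defining $\Map^\tensor(B,C)[\mathbf{t}]$ may be computed after restricting the indexing functor $(S,S')\mapsto\Map(B[S],C[T\amalg S'])$ along this inclusion; ends are invariant under such an equivalence. With $T=\mathbf{t}$ this gives
\begin{align*}
  \Map^\tensor(B,C)[\mathbf{t}]\Iso\catend_{\Sigma'}\Map(B[-],C[\mathbf{t}\amalg-]),
\end{align*}
and likewise for the sequence case over $\Omega'$.

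Next I would exploit that $\Sigma'$ has no morphisms between $\mathbf{s}$ and $\mathbf{s'}$ when $s\neq s'$, so it is the coproduct of categories $\coprod_{s\geq 0}\Sigma_s$, where $\Sigma_s$ is the one-object groupoid whose morphisms are the bijections of $\mathbf{s}$. Since an end over a coproduct of indexing categories is the product of the ends over the summands (the wedge conditions never mix distinct components), this yields
\begin{align*}
  \Map^\tensor(B,C)[\mathbf{t}]\Iso\prod_{s\geq 0}\catend_{\Sigma_s}\Map(B[\mathbf{s}],C[\mathbf{t}\amalg\mathbf{s}]).
\end{align*}
It remains to identify each factor. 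The restricted functor $\Sigma_s\times\Sigma_s^\op\rarrow\C$ takes the single value $\Map(B[\mathbf{s}],C[\mathbf{t}\amalg\mathbf{s}])$, equipped with the diagonal $\Sigma_s$-action obtained from the contravariant action on $B[\mathbf{s}]$ together with the covariant action on $C[\mathbf{t}\amalg\mathbf{s}]$ induced by the inclusion $\Sigma_s\hookrightarrow\Sigma_{t+s}$ permuting the last $s$ elements of $\mathbf{t}\amalg\mathbf{s}\iso\mathbf{t}\boldsymbol{+}\mathbf{s}$. By the identification of $Z^G=\catend Z$ with a fixed-point object (Section~\ref{sec:preliminaries}), this end is exactly $\Map(B[\mathbf{s}],C[\mathbf{t}\boldsymbol{+}\mathbf{s}])^{\Sigma_s}$, which gives the first isomorphism.

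For the sequence case the same two reductions apply, except that $\Omega'$ carries only identity morphisms; its components $\Omega_m$ are the trivial one-object categories, and an end over a trivial category is simply the value of the functor. Hence no fixed points intervene, and restricting $(M,M')\mapsto\Map(Y[M],Z[N\amalg M'])$ along $\Omega'\subset\Omega$ and splitting over components produces
\begin{align*}
  \Map^{\tensorhat}(Y,Z)[\mathbf{n}]\Iso\prod_{m\geq 0}\Map(Y[\mathbf{m}],Z[\mathbf{n}\boldsymbol{+}\mathbf{m}]).
\end{align*}

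The main obstacle I expect is the bookkeeping of the $\Sigma_s$-action: one must verify that the action carried by the fixed-point factor is precisely the diagonal action described above---in particular that the action on the target runs through the standard inclusion $\Sigma_s\hookrightarrow\Sigma_{t+s}$ fixing $\mathbf{t}$---and that this is consistent with the convention $Z^G=\catend Z$ fixed in the preliminaries and with the universal property in~\eqref{eq:mapping_tensor_universal}. The reduction to the skeleton and the passage across the coproduct of indexing categories are standard and need only a sentence each; the genuine care lies in tracking how the identification $\mathbf{t}\amalg\mathbf{s}\iso\mathbf{t}\boldsymbol{+}\mathbf{s}$ interacts with these actions.
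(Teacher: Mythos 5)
Your proposal is correct and follows essentially the same route as the paper: the paper's proof also restricts the defining end along the small skeleton $\Sigma'\subsetof\Sigma$ (resp.\ $\Omega'\subsetof\Omega$) and writes it as an equalizer of products indexed by the objects and morphisms of the skeleton, which is exactly your component-by-component decomposition with each factor identified as a $\Sigma_s$-fixed-point object (resp.\ a bare mapping object, since $\Omega'$ has only identities). Your extra care about the diagonal $\Sigma_s$-action through $\Sigma_s\hookrightarrow\Sigma_{t+s}$ is a correct elaboration of a detail the paper leaves implicit.
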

  
\begin{proof}
Consider \eqref{eq:mapping_tensor_calc_objectwise}. Using the universal property \eqref{eq:mapping_tensor_universal} and restricting to a small skeleton $\Sigma'\subsetof\Sigma$, it is easy to obtain natural isomorphisms
\begin{align*}
  \Map^\tensor(B,C)[\mathbf{t}]&\Iso\lim
  \biggl( 
  \xymatrix{
  \prod_{\substack{S\\ \text{in}\,\Sigma'}}\limits
  \Map(B[S],C[T\amalg S])\ar@<2.5ex>[r]\ar@<1.5ex>[r] & 
  \prod_{\substack{\function{\zeta}{S'}{S}\\ \text{in}\,\Sigma'}}\limits 
  \Map(B[S],C[T\amalg S'])
  }
  \biggr).
\end{align*}
This verifies \eqref{eq:mapping_tensor_calc_objectwise} and a similar argument verifies the case for sequences.
\end{proof}

These constructions give functors
\begin{align*}
  \SymSeq^\op\times\SymSeq\rarrow\SymSeq,&\quad\quad
  (B,C)\longmapsto\Map^\tensor(B,C),\\
  \Seq^\op\times\Seq\rarrow\Seq,&\quad\quad
  (Y,Z)\longmapsto\Map^{\hat{\tensor}}(Y,Z).
\end{align*}

\begin{prop}\label{prop:adjunction_tensor_object}
Let $A,B,C$ be symmetric sequences and let $X,Y,Z$ be sequences. There are isomorphims 
\begin{align}\label{eq:adjunction_tensor_object}
  \hom(A\tensor B,C)&\Iso\hom(A,\Map^\tensor(B,C)),\\
  \label{eq:adjunction_tensor_object_bar}
  \hom(X\hat{\tensor} Y,Z)&\Iso\hom(X,\Map^{\hat{\tensor}}(Y,Z)),
\end{align}
natural in $A,B,C$ and $X,Y,Z$.
\end{prop}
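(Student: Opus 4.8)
The plan is to establish \eqref{eq:adjunction_tensor_object} as a chain of natural isomorphisms obtained by end-calculus, identifying both the source and the target of the claimed adjunction with a common ``double end'' built from the closed structure \eqref{eq:adjunction_C} on $\C$. The statement \eqref{eq:adjunction_tensor_object_bar} for sequences is entirely analogous, replacing $\Sigma$ by $\Omega$ and $\Set$ by $\OrdSet$ throughout, so I would prove only the symmetric-sequence case in detail. To avoid size issues all ends below are computed over the small skeleton $\Sigma'$, exactly as in the proof of Proposition~\ref{prop:calc_map_tensor_object}.

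First, recall that a morphism in $\SymSeq=\C^{\Sigma^\op}$ is a natural transformation, so that $\hom_{\SymSeq}(F,G)$ is the end $\int_{T\in\Sigma}\hom_\C(F[T],G[T])$. Applying this to the right-hand side and unwinding the definition $\Map^\tensor(B,C)[T]=\int_{S\in\Sigma}\Map(B[S],C[T\amalg S])$ from Definition~\ref{def:map_tensor_object}, I would pull the functor $\hom_\C(A[T],-)$ inside the defining end of $\Map^\tensor(B,C)[T]$ (using that $\hom_\C(X,-)$ preserves ends, being a limit), and then apply the closed monoidal adjunction \eqref{eq:adjunction_C} objectwise. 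This yields
\[
\hom_{\SymSeq}\bigl(A,\Map^\tensor(B,C)\bigr)\Iso\int_{T\in\Sigma}\int_{S\in\Sigma}\hom_\C\bigl(A[T]\tensor B[S],\,C[T\amalg S]\bigr).
\]

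For the left-hand side, I would use that $A\tensor B$ is by Definition~\ref{def:tensor_products} the left Kan extension along $\amalg\colon(\Sigma^\op)^{\times 2}\to\Sigma^\op$ of the objectwise tensor $(S_1,S_2)\mapsto A[S_1]\tensor B[S_2]$. The universal property of the left Kan extension (restriction is right adjoint) gives
\[
\hom_{\SymSeq}(A\tensor B,C)\Iso\hom_{\C^{(\Sigma^\op)^{\times 2}}}\bigl((S_1,S_2)\mapsto A[S_1]\tensor B[S_2],\ (S_1,S_2)\mapsto C[S_1\amalg S_2]\bigr),
\]
and the right-hand side here is precisely the end $\int_{(S_1,S_2)\in\Sigma\times\Sigma}\hom_\C(A[S_1]\tensor B[S_2],C[S_1\amalg S_2])$. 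By the Fubini theorem for ends this product end agrees, after renaming $T=S_1$ and $S=S_2$, with the double end obtained above. Hence both sides of \eqref{eq:adjunction_tensor_object} are canonically isomorphic to the same object, and composing the isomorphisms gives the asserted natural isomorphism.

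Each step is a routine instance of standard end-calculus, so the only real care needed is bookkeeping. Since $\Sigma$ is a groupoid, the ends over $\Sigma$ are limits recording $\Sigma_{|S|}$-equivariance---exactly the wedge condition of the universal property \eqref{eq:mapping_tensor_universal}---and one must check that the representable/end interchange and the Fubini interchange respect this, and that \eqref{eq:adjunction_C} is applied naturally in all three slots; naturality in $A,B,C$ is then automatic since every isomorphism in the chain comes from a universal property or from the naturality already asserted in \eqref{eq:adjunction_C}. I expect the main obstacle to be presentational rather than substantive, as the result is an instance of the general fact that Day convolution against $(\Sigma,\amalg)$ inherits closedness from $(\C,\tensor,\unit)$. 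As an alternative more in keeping with the explicit computations of Proposition~\ref{prop:tensor_calculation} and Proposition~\ref{prop:calc_map_tensor_object}, one could instead verify the adjunction on $\Sigma'$ using the coproduct formula \eqref{eq:tensors1} together with the orbit-and-fixed-point adjunctions of Proposition~\ref{prop:diagram_adjunctions}, matching $\Map^\tensor(B,C)[\mathbf t]\cong\prod_{s\geq0}\Map(B[\mathbf s],C[\mathbf t\boldsymbol{+}\mathbf s])^{\Sigma_s}$ term by term; this avoids abstract end-calculus at the cost of more indices.
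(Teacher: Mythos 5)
Your proposal is correct and is in substance the same argument the paper sketches: the paper's cited ingredients --- the coproduct formula \eqref{eq:tensors0} (which is just the pointwise formula for the left Kan extension), the wedge universal property \eqref{eq:mapping_tensor_universal} of the defining end, and the closed structure \eqref{eq:adjunction_C} --- are exactly the universal properties you invoke, merely repackaged as the Kan-extension adjunction, end-preservation by representables, and Fubini for ends. Both routes identify a map $A\tensor B\rarrow C$ with a compatible family $A[T]\tensor B[S]\rarrow C[T\amalg S]$ and then transpose, so no further comparison is needed.
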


\begin{proof}
Consider \eqref{eq:adjunction_tensor_object}. Using the calculation \eqref{eq:tensors0} together with the universal property \eqref{eq:mapping_tensor_universal} and the natural correspondence \eqref{eq:adjunction_C}, is it easy to verify that giving a map $A\tensor B\rarrow C$ is the same as giving a map $A\rarrow\Map^\tensor(B,C)$, and that the resulting correspondence is natural. A similar argument verifies the case for sequences.
\end{proof}

\subsection{Monoidal structures}

\begin{prop}\label{prop:tensor_monoidal}
$(\SymSeq,\tensor,1)$ and $(\Seq,\hat{\tensor},1)$ have the structure of closed symmetric monoidal categories with all small limits and colimits. 
\end{prop}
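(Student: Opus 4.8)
The plan is to assemble the claim from three ingredients: small (co)completeness, the symmetric monoidal axioms, and the closed structure. The last of these is already in hand, since the adjunction isomorphisms \eqref{eq:adjunction_tensor_object} and \eqref{eq:adjunction_tensor_object_bar} of Proposition~\ref{prop:adjunction_tensor_object} exhibit $\Map^\tensor$ and $\Map^{\tensorhat}$ as the required internal homs, so that each functor $-\tensor B$ (resp. $-\tensorhat Y$) is a left adjoint. For small limits and colimits I would invoke the standard fact that a diagram category $\C^{\mathcal{D}}$ into a (co)complete category is (co)complete with (co)limits computed objectwise; applied to $\mathcal{D}=\Sigma^\op$ and $\mathcal{D}=\Omega^\op$, and using Basic Assumption~\ref{BasicAssumption}, this gives all small limits and colimits in $\SymSeq$ and $\Seq$. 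This reduces the proposition to verifying that $(\SymSeq,\tensor,1)$ and $(\Seq,\tensorhat,1)$ satisfy the symmetric monoidal coherence data and axioms.

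The conceptual point I would exploit is that both tensor products are instances of Day convolution: $\tensor$ and $\tensorhat$ are, by Definition~\ref{def:tensor_products}, the left Kan extensions of the objectwise tensor along the disjoint union functors on $(\Sigma^\op)^{\times t}$ and $(\Omega^\op)^{\times t}$, and $(\Sigma,\amalg,\emptyset)$ and $(\Omega,\amalg,\emptyset)$ are themselves symmetric monoidal. Here I would take care with one subtlety: for $\Omega$ the symmetry $M\amalg N\rarrow N\amalg M$ is \emph{not} the swap map (which fails to preserve order) but rather the unique order-isomorphism between these two totally ordered sets of the same cardinality; the relations $\sigma^2=\id$ and the hexagon then follow from the fact that a finite totally ordered set has no nontrivial order-automorphism. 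With these structures in place the structural isomorphisms are built as follows. Associativity uses the compositional property of left Kan extensions together with associativity of $\amalg$, so that both bracketings of a triple product are canonically isomorphic to the three-fold tensor product $A\tensor B\tensor C$ (resp. $X\tensorhat Y\tensorhat Z$) of Definition~\ref{def:tensor_products}. The unit isomorphisms follow from the formula for $1$ and the fact that $\emptyset$ is the monoidal unit of the indexing category: in the two-fold calculation \eqref{eq:tensors0} (resp. \eqref{eq:tensors2}) only the summand with an empty fiber survives, and it contributes $\unit\tensor A[S]\Iso A[S]$ via the unit isomorphism of $\C$. The symmetry is induced by the symmetry of $\amalg$ together with the symmetry of $\tensor$ on $\C$; on the cardinality descriptions \eqref{eq:tensors1} and \eqref{eq:tensors3} this amounts to the reindexing $(s_1,s_2)\mapsto(s_2,s_1)$ followed by the symmetry isomorphism in $\C$.

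The main obstacle, and the only part requiring genuine care, is the verification of the coherence axioms (pentagon, triangle, the two hexagons, and $\sigma^2=\id$). I would dispatch these in either of two equivalent ways. The clean route is to appeal to the general theory of Day convolution: for a symmetric monoidal indexing category and a closed symmetric monoidal cocomplete target $\C$, the convolution product — which is precisely the left Kan extension of the objectwise tensor along $\amalg$, as in Definition~\ref{def:tensor_products} — carries a canonical closed symmetric monoidal structure with all coherence axioms automatic, the existence of the relevant Kan extensions being exactly the content of Proposition~\ref{prop:tensor_calculation}. The hands-on alternative is to evaluate every coherence diagram objectwise using the explicit coproduct formulas \eqref{eq:tensors0}--\eqref{eq:tensors3}: each such diagram then decomposes over the indexing set of ordered partitions into a coproduct of coherence diagrams for $(\C,\tensor,\unit)$, which commute by the coherence axioms of $\C$, while the compatibility of the reindexing bijections is governed by the symmetric monoidal coherence of $(\Set,\amalg)$ and $(\OrdSet,\amalg)$. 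Either way the axioms reduce to coherence already available in $\C$ and in the indexing categories, and together with the closed structure and small (co)completeness established above this completes the proof.
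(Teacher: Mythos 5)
Your proposal is correct and follows essentially the same route as the paper: the paper's proof likewise obtains closedness from Proposition~\ref{prop:adjunction_tensor_object}, computes limits and colimits objectwise, and verifies the symmetric monoidal coherence data directly from the coproduct formula \eqref{eq:tensors0} --- which is your ``hands-on alternative.'' The Day convolution packaging and the observation about the symmetry of $\amalg$ on $\Omega$ (the unique order-isomorphism rather than the underlying-set swap) are correct refinements that the paper leaves implicit.
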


\begin{proof}
Consider the case of symmetric sequences. To verify the symmetric monoidal structure, it is easy to use \eqref{eq:tensors0} to describe the required natural isomorphisms and to verify the appropriate diagrams commute. Proposition~\ref{prop:adjunction_tensor_object} verifies the symmetric monoidal structure is closed. Limits and colimits are calculated objectwise. A similar argument verifies the case for sequences.
\end{proof}

\section{Circle products for (symmetric) sequences}
\label{sec:circle_products}

We describe a \emph{circle product} $\circ$ on $\SymSeq$ and a related circle product $\circhat$ on $\Seq$. These are monoidal products which are \emph{not} symmetric monoidal,  and they figure in the definitions of $\Sigma$-operad and $\Omega$-operad respectively (Definition ~\ref{defn:operads}). Perhaps surprisingly, these monoidal products possess appropriate adjoints, which can be interpreted as mapping objects. For instance, there are objects $\Map^\circ(B,C)$ and $\Map^{\hat{\circ}}(Y,Z)$ which fit into isomorphisms 
\begin{align*}
  \hom(A\circ B,C)&\Iso\hom(A,\Map^\circ(B,C)),\\
  \hom(X\circhat Y,Z)&\Iso\hom(X,\Map^{\hat{\circ}}(Y,Z)),
\end{align*}
natural in the symmetric sequences $A,B,C$ and the sequences $X,Y,Z$. 

The material in this section is largely influenced by \cite{Rezk}; earlier work exploiting circle product $\circ$ for symmetric sequences includes \cite{Getzler_Jones, Smirnov}, and more recent work includes \cite{Fresse_lie_theory, Fresse, Fresse_modules, Kapranov_Manin, Kelly}. The circle product $\hat{\circ}$  is used in \cite{Batanin} for working with $\Omega$-operads and their algebras.

\subsection{Circle products (or composition products)}

Let $A,B$ be symmetric sequences and $S\in\Sigma$. Let $X,Y$ be sequences and $M\in\Omega$. There are functors
\begin{align*}
  \Sigma^{\op}\times\Sigma\rarrow\C,& \quad\quad
  (T',T)\longmapsto A[T']\tensor B[S,T],\\
  \Omega^{\op}\times\Omega\rarrow\C,& \quad\quad
  (N',N)\longmapsto X[N']\tensor Y\langle M,N\rangle,
\end{align*}
which are useful for defining circle products of symmetric sequences and circle products of sequences.

\begin{defn}\label{def:circle}Let $A,B$ be symmetric sequences and $S\in\Sigma$. Let $X,Y$ be sequences and $M\in\Omega$. The \emph{circle products} (or composition products) $A\circ B\in\SymSeq$ and $X\circhat Y\in\Seq$ are defined objectwise by the coends
\begin{align*}
  (A\circ B)[S] \ &:= \ A\tensor_\Sigma (B^{\tensor-})[S] \ = \ 
  A\tensor_\Sigma B[S,-],\\
  (X\circhat Y)[M] \ &:= \ X\tensor_\Omega (Y^{\hat{\tensor}-})[M] \ = \
  X\tensor_\Omega Y\langle M,-\rangle.
\end{align*}
\end{defn}

Hence $A\circ B$ satisfies objectwise the universal property
\begin{align}\label{eq:circle_product_universal}
\xymatrix{
  T\ar[dd]^{\xi} &  
  A[T]\tensor B[S,T]\ar@/^1pc/[dr]_-{i[T]}\ar@(r,u)[rrd]^-{f[T]} & \\
    & A[T']\tensor B[S,T]\ar[u]^-{\xi^*\tensor[\id,\id]}   
    \ar[d]_-{\id\tensor[\id,\xi]}
    & (A\circ B)[S]\ar@{.>}[r]^-{\bar{f}}_-{\existsunique} & \cdot\\
  T' & A[T']\tensor B[S,T']\ar@/_1pc/[ur]^-{i[T']}\ar@(r,d)[rru]_-{f[T']} &
}
\end{align}
that each wedge $f$ factors uniquely through the initial wedge $i$ of $(A\circ B)[S]$. A similar universal property is satisfied objectwise by the circle products $X\circhat Y$. 

\begin{prop}\label{prop:circ_calc}
Let $A,B$ be symmetric sequences and $S\in\Sigma$, with $s:=|S|$. Let $X,Y$ be sequences and $M\in\Omega$, with $m:=|M|$. There are natural isomorphisms,
\begin{align}
  \label{eq:circ_calc}
  (A\circ B)[\mathbf{s}] &\Iso 
  \coprod_{t\geq 0}A[\mathbf{t}]\tensor_{\Sigma_t}
  (B^{\tensor t})[\mathbf{s}]\Iso
  \coprod_{t\geq 0}A[\mathbf{t}]\tensor_{\Sigma_t}
  B[\mathbf{s},\mathbf{t}],\\
  \notag
  (X\circhat Y)[\mathbf{m}] &\Iso
  \coprod_{n\geq 0}X[\mathbf{n}]\tensor (Y^{\hat{\tensor} n})[\mathbf{m}]\Iso
  \coprod_{n\geq 0}X[\mathbf{n}]\tensor Y\langle\mathbf{m},\mathbf{n}\rangle.
\end{align}
\end{prop}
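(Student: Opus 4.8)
The plan is to evaluate the defining coend $A\tensor_\Sigma B[S,-]$ by restricting the indexing category $\Sigma$ to its small skeleton $\Sigma'$, exactly in the spirit of the proof of Proposition~\ref{prop:calc_map_tensor_object}. Since $\Sigma'$ is a small skeleton of $\Sigma$, the inclusion $\Sigma'\subsetof\Sigma$ is an equivalence of categories, and a coend is preserved under restriction along an equivalence; hence the universal property \eqref{eq:circle_product_universal} identifies $(A\circ B)[S]$ with the coequalizer
\begin{align*}
  \coprod_{\substack{\function{\xi}{T}{T'}\\ \text{in $\Sigma'$}}}
  A[T']\tensor B[S,T]
  \ \rightrightarrows\
  \coprod_{\substack{T\\ \text{in $\Sigma'$}}} A[T]\tensor B[S,T],
\end{align*}
whose two maps are induced by $\xi^*\tensor\id$ and $\id\tensor\xi_*$ as in \eqref{eq:circle_product_universal}.

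Next I would use that the objects of $\Sigma'$ are the sets $\mathbf{t}$ for $t\geq 0$, with no morphisms between objects of distinct cardinality. The coequalizer therefore splits as a coproduct over $t\geq 0$ of the coends over the one-object automorphism groupoids $\Sigma_t$. Each such coend is, by definition, the orbit object for the $\Sigma_t$-action on $A[\mathbf{t}]\tensor B[S,\mathbf{t}]$ that combines the right action on $A[\mathbf{t}]$ (coming from $A$ being a functor on $\Sigma^\op$) with the left action on $B[S,\mathbf{t}]$ (permutation of the tensor factors in the tensor power); that is, it is the orbit object $A[\mathbf{t}]\tensor_{\Sigma_t}B[S,\mathbf{t}]$. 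This gives
\begin{align*}
  (A\circ B)[S]\ \Iso\ \coprod_{t\geq 0}A[\mathbf{t}]\tensor_{\Sigma_t}B[S,\mathbf{t}].
\end{align*}
Setting $S=\mathbf{s}$ and recalling from Definition~\ref{def:tensor_powers} that $B[\mathbf{s},\mathbf{t}]=(B^{\tensor t})[\mathbf{s}]$ yields both isomorphisms in \eqref{eq:circ_calc}.

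For the sequence case I would run the identical argument with $\Sigma,\Sigma',\Sigma_t$ replaced by $\Omega,\Omega',\Omega_n$. The one decisive difference is that $\Omega_n$ has only the identity morphism, so the coend over $\Omega_n$ imposes no identification and simply returns $X[\mathbf{n}]\tensor Y\langle M,\mathbf{n}\rangle$; this is precisely why no symmetric group factors appear in the sequence formula. Substituting $M=\mathbf{m}$ and using $Y\langle\mathbf{m},\mathbf{n}\rangle=(Y^{\hat{\tensor}n})[\mathbf{m}]$ completes this half.

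The step I expect to demand the most care is the reduction to the skeleton together with the bookkeeping of variances: one must check that, after unwinding \eqref{eq:circle_product_universal} on $\Sigma'$, the residual $\Sigma_t$-action on $A[\mathbf{t}]\tensor B[S,\mathbf{t}]$ is genuinely the diagonal action pairing the right action on $A[\mathbf{t}]$ against the left action on $B[S,\mathbf{t}]$, so that the coend really collapses to the orbit object $\tensor_{\Sigma_t}$. Once this is confirmed, the remaining manipulations are routine.
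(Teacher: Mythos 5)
Your proposal is correct and follows essentially the same route as the paper: restrict the defining coend to the small skeleton $\Sigma'$ via the universal property \eqref{eq:circle_product_universal}, split the resulting coequalizer over the connected components $\Sigma_t$ of $\Sigma'$, and identify each piece as the orbit object $A[\mathbf{t}]\tensor_{\Sigma_t}B[\mathbf{s},\mathbf{t}]$ (with the $\Omega_n$ case trivializing since only identities occur). The paper's proof is just a terser version of this, and your care about the variances (right action on $A[\mathbf{t}]$ against left action on $B[S,\mathbf{t}]$) matches the paper's definition of $\tensor_{\Sigma_t}$ as a coend over $\Sigma_t^{\op}\times\Sigma_t$.
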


\begin{proof}
Consider \eqref{eq:circ_calc}. Using the universal property \eqref{eq:circle_product_universal} and restricting to a small skeleton $\Sigma'\subsetof\Sigma$, it is easy to obtain natural isomorphisms
\begin{align*}
  (A\circ B)[\mathbf{s}] \Iso\colim
  \left( 
  \xymatrix{
  \coprod_{\substack{\function{\xi}{T}{T'}\\ \text{in}\,\Sigma'}}\limits
  A[T']\tensor B[S,T]\ar@<2.5ex>[r]\ar@<1.5ex>[r] & 
  \coprod_{\substack{T\\ \text{in}\,\Sigma'}}\limits A[T]\tensor B[S,T]
  }
  \right).
\end{align*}
Note that all morphisms in $\Sigma$ and $\Sigma'$ are isomorphisms. This verifies \eqref{eq:circ_calc} and a similar argument verifies the case for sequences.
\end{proof}

These constructions give functors
\begin{align*}
  \SymSeq\times\SymSeq\rarrow\SymSeq,&\quad\quad
  (A,B)\longmapsto A\circ B,\\
  \Seq\times\Seq\rarrow\Seq,&\quad\quad
  (X,Y)\longmapsto X\circhat Y.
\end{align*}
Observe that the (two-sided) unit for the circle product $\circ$ on $\SymSeq$ and the (two-sided) unit for the circle product $\hat{\circ}$ on $\Seq$, both denoted ``$I$'', are given by the same formula
\begin{align*}
  I[S] :=
  \left\{
    \begin{array}{rl}
    \unit,&\text{for $|S|=1$,}\\
    \emptyset,&\text{otherwise}.
    \end{array}
    \right.
\end{align*}

\begin{defn}
Let $A$ be a symmetric sequence, $X$ a sequence, and $Z\in\C$. The corresponding functors $\functor{A\circ(-)}{\C}{\C}$ and $\functor{X\circhat(-)}{\C}{\C}$ are defined objectwise by,
\begin{align*}
  A\circ(Z):= \coprod\limits_{t\geq 0}A[\mathbf{t}]
  \tensor_{\Sigma_t}Z^{\tensor t},\quad\quad
  X\circhat(Z):= \coprod\limits_{n\geq 0}X[\mathbf{n}]
  \tensor Z^{\tensor n}.
\end{align*}
\end{defn}

The category $\C$ embeds in $\SymSeq$ (resp. $\Seq$) as the full subcategory of symmetric sequences (resp. sequences) concentrated at $0$, via the functor $\functor{\hat{-}}{\C}{\SymSeq}$ (resp. $\functor{\hat{-}}{\C}{\Seq}$) defined objectwise by
\begin{align}\label{eq:embedding_at_zero}
  \hat{Z}[S] :=
  \left\{
    \begin{array}{rl}
    Z,&\text{for $|S|=0$,}\\
    \emptyset,&\text{otherwise}.
    \end{array}
  \right.
\end{align}

The following calculations follow directly from Proposition~\ref{prop:circ_calc}; similar calculations are true for sequences. 

\begin{prop}\label{prop:useful_calculations_for_circ}
Let $A,B$ be symmetric sequences, $s\geq 0$, and $Z\in\C$. There are natural isomorphisms,
\begin{align}
  \notag
  \emptyset\circ A &\Iso \emptyset,\quad
  I\circ A\Iso A,\quad A\circ I\Iso A,\\ 
  \notag
  (A\circ \emptyset)[\mathbf{s}]&\Iso
  \left\{
    \begin{array}{rl}
    A[\mathbf{0}],&\text{for $s=0$,}\\
    \emptyset,&\text{otherwise,}
  \end{array}
  \right.\\
  \label{eq:another_useful_hat_relation}   
  (A\circ \hat{Z})[\mathbf{s}]&\Iso
  \left\{
    \begin{array}{rl}
    A\circ(Z),
    &\text{for $s=0$,}\\
    \emptyset,&\text{otherwise},
    \end{array}
  \right.\\
  \label{eq:useful_relation_for_conc_at_zero}
  A\circ(Z)&\Iso(A\circ\hat{Z})[\mathbf{0}],\\
  \notag
  (A\circ B)[\mathbf{0}]&\Iso A\circ(B[\mathbf{0}]).
\end{align}
\end{prop}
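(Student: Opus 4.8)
The plan is to verify each of the six natural isomorphisms in Proposition~\ref{prop:useful_calculations_for_circ} by reducing everything to the objectwise formula \eqref{eq:circ_calc} from Proposition~\ref{prop:circ_calc}, namely $(A\circ B)[\mathbf{s}]\Iso\coprod_{t\geq 0}A[\mathbf{t}]\tensor_{\Sigma_t}B[\mathbf{s},\mathbf{t}]$, and then doing the bookkeeping with the definitions of the various special symmetric sequences. Since all the objects in play ($\emptyset$, $I$, $\hat{Z}$) are concentrated at a single degree, in each case exactly one term of the coproduct over $t$ survives, and the whole computation collapses to identifying that one surviving term.

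The order I would carry this out: First, for $I\circ A\Iso A$ and $A\circ I\Iso A$, I would use that $I[\mathbf{t}]=\unit$ for $t=1$ and $\emptyset$ otherwise. For $I\circ A$ the outer index $t=1$ is forced, giving $\unit\tensor_{\Sigma_1}A[\mathbf{s},\mathbf{1}]\Iso A[\mathbf{s}]$ since $A[\mathbf{s},\mathbf{1}]\Iso A[\mathbf{s}]$ (this is the $A^{\tensor 1}\Iso A$ calculation already recorded after Definition~\ref{def:tensor_powers}). For $A\circ I$ I would instead use $B[\mathbf{s},\mathbf{t}]=(I^{\tensor t})[\mathbf{s}]$: the tensor power of $I$ is concentrated so that only the partition counting $s$ elements each sent to a distinct target contributes, forcing $t=s$, and the $\Sigma_s$-action reassembles $A[\mathbf{s}]$. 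Next, $\emptyset\circ A\Iso\emptyset$ follows because every term $A[\mathbf{t}]\tensor_{\Sigma_t}B[\mathbf{s},\mathbf{t}]$ with $A=\emptyset$ has the initial object as a tensor factor, and $\emptyset\tensor(-)\Iso\emptyset$ by the monoidal model category structure (Basic Assumption~\ref{BasicAssumption}). For $(A\circ\emptyset)[\mathbf{s}]$ I would observe that $\emptyset[\mathbf{s},\mathbf{t}]\Iso\emptyset$ except when both $s=0$ and $t=0$, where $A[\mathbf{0},\mathbf{0}]\Iso\unit$ by Definition~\ref{def:tensor_powers}, leaving only the $t=0$ term $A[\mathbf{0}]\tensor_{\Sigma_0}\unit\Iso A[\mathbf{0}]$.

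For the two statements involving $\hat{Z}$, I would plug the defining formula \eqref{eq:embedding_at_zero} into \eqref{eq:circ_calc}. Since $\hat{Z}$ is concentrated at $0$, one checks $(\hat{Z}^{\tensor t})[\mathbf{s}]\Iso\hat{Z}[\mathbf{s},\mathbf{t}]$ vanishes unless $s=0$, because any surjection-counting map $\function{\pi}{\mathbf{s}}{\mathbf{t}}$ forces $\pi^{-1}(n)=\emptyset$ for all $n$, i.e.\ $\mathbf{s}=\emptyset$; when $s=0$ only the constant map survives and $(\hat{Z}^{\tensor t})[\mathbf{0}]\Iso Z^{\tensor t}$. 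Summing over $t$ then reproduces exactly $A\circ(Z)=\coprod_t A[\mathbf{t}]\tensor_{\Sigma_t}Z^{\tensor t}$, which gives both \eqref{eq:another_useful_hat_relation} and \eqref{eq:useful_relation_for_conc_at_zero}. Finally $(A\circ B)[\mathbf{0}]\Iso A\circ(B[\mathbf{0}])$ is the same computation with $\hat{Z}$ replaced by a general $B$ and $\mathbf{s}=\mathbf{0}$: evaluating $B[\mathbf{0},\mathbf{t}]\Iso B[\mathbf{0}]^{\tensor t}$ (again recorded after Definition~\ref{def:tensor_powers}) turns the coproduct into $\coprod_t A[\mathbf{t}]\tensor_{\Sigma_t}B[\mathbf{0}]^{\tensor t}$, which is precisely $A\circ(B[\mathbf{0}])$.

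The main obstacle, such as it is, is not any single hard step but the combinatorial care needed in tracking which degrees $s,t$ survive and in confirming that the residual $\Sigma_t$-coinvariants (or, for the unordered $0$-ary cases, the absence of symmetry) match the target on the nose. In particular, the arguments $I\circ A$ versus $A\circ I$ are genuinely asymmetric—one collapses the outer index, the other the inner tensor power—so I would keep the two computations separate rather than invoking any spurious symmetry of $\circ$, which is emphatically \emph{not} symmetric monoidal. All of these reductions are routine given \eqref{eq:circ_calc} and the tensor-power identities, so I expect the proof to consist mainly of stating which term survives in each case and citing the relevant earlier calculation.
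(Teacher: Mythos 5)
Your proposal is correct and matches the paper's approach exactly: the paper offers no written proof beyond the remark that these calculations ``follow directly from Proposition~\ref{prop:circ_calc},'' and your case-by-case bookkeeping of which summand of $\coprod_{t\geq 0}A[\mathbf{t}]\tensor_{\Sigma_t}B[\mathbf{s},\mathbf{t}]$ survives is precisely the intended argument. (The only quibble is a harmless typo where you write $A[\mathbf{0},\mathbf{0}]\Iso\unit$ in the $A\circ\emptyset$ case instead of $\emptyset[\mathbf{0},\mathbf{0}]\Iso\unit$, though both happen to be true.)
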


\subsection{Properties of tensor and circle products}

It is useful to understand how tensor products and circle products interact.

\begin{prop}\label{prop:circle_and_powers}
Let $A,B,C$ be symmetric sequences, $X,Y,Z$ be sequences, and $t\geq 0$. There are natural isomorphisms
\begin{align}
  \label{eq:tensor_power_sym}
  (A\tensor B)\circ C& \Iso (A\circ C)\tensor (B\circ C),&
  (B^{\tensor t})\circ C& \Iso(B\circ C)^{\tensor t},\\
  \notag
  (X\hat{\tensor} Y)\circhat Z& \Iso (X\circhat Z)\hat{\tensor} 
  (Y\circhat Z),&
  (Y^{\hat{\tensor} t})\circhat Z& \Iso(Y\circhat Z)^{\hat{\tensor} t}.
\end{align}
\end{prop}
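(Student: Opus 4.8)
The plan is to verify all four isomorphisms objectwise, since tensor products, circle products, and the relevant (co)limits in $\SymSeq$ and $\Seq$ are all computed objectwise, and then to reduce the two tensor-power isomorphisms on the right to the two distributivity isomorphisms on the left. Indeed, $-\circ C$ is a strong (symmetric) monoidal functor $(\SymSeq,\tensor,1)\rarrow(\SymSeq,\tensor,1)$: the distributivity isomorphism supplies the monoidal comparison, and $1\circ C\Iso 1$ supplies compatibility with the unit (this last follows from Proposition~\ref{prop:circ_calc} together with $C[\mathbf{s},\mathbf{0}]\Iso 1[\mathbf{s}]$). A strong monoidal functor preserves tensor powers, so $(B^{\tensor t})\circ C\Iso(B\circ C)^{\tensor t}$; concretely this is an easy induction on $t$, with base case $B^{\tensor 0}=1$ and inductive step $B^{\tensor(t+1)}\Iso B^{\tensor t}\tensor B$ followed by an application of distributivity. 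The same reasoning handles the ordered case. Thus everything rests on establishing $(A\tensor B)\circ C\Iso(A\circ C)\tensor(B\circ C)$ and its ordered analogue.

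For the symmetric case I would evaluate the left side at $\mathbf{s}$. By Proposition~\ref{prop:circ_calc} and formula~\eqref{eq:tensors1},
\[
((A\tensor B)\circ C)[\mathbf{s}]\Iso\coprod_{t\geq 0}\ \coprod_{t_1+t_2=t}\Bigl(A[\mathbf{t_1}]\tensor B[\mathbf{t_2}]\underset{\Sigma_{t_1}\times\Sigma_{t_2}}{\cdot}\Sigma_t\Bigr)\tensor_{\Sigma_t}C[\mathbf{s},\mathbf{t}].
\]
The first key step is the induction--restriction identity $(M\underset{H}{\cdot}G)\tensor_G N\Iso M\tensor_H N$, a form of the orbit adjunction of Proposition~\ref{prop:diagram_adjunctions}, which collapses the $\Sigma_t$-coinvariants to $\Sigma_{t_1}\times\Sigma_{t_2}$-coinvariants of $A[\mathbf{t_1}]\tensor B[\mathbf{t_2}]\tensor C[\mathbf{s},\mathbf{t}]$. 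The second key step is a compatible splitting of $C[\mathbf{s},\mathbf{t}]=(C^{\tensor t})[\mathbf{s}]$ along $\mathbf{t}=\mathbf{t_1}\amalg\mathbf{t_2}$: a function $\mathbf{s}\rarrow\mathbf{t}$ in $\Set$ is the same as a splitting $\mathbf{s}=S_1\amalg S_2$ together with functions $S_1\rarrow\mathbf{t_1}$ and $S_2\rarrow\mathbf{t_2}$, which gives a $\Sigma_{t_1}\times\Sigma_{t_2}$-equivariant isomorphism
\[
C[\mathbf{s},\mathbf{t}]\Iso\coprod_{\mathbf{s}=S_1\amalg S_2}C[S_1,\mathbf{t_1}]\tensor C[S_2,\mathbf{t_2}].
\]
Taking the $\Sigma_{t_1}$- and $\Sigma_{t_2}$-coinvariants separately (tensor commutes with these colimits since $\C$ is closed) and reapplying Proposition~\ref{prop:circ_calc} reassembles $A[\mathbf{t_1}]\tensor_{\Sigma_{t_1}}C[S_1,\mathbf{t_1}]$ into $(A\circ C)[S_1]$, and similarly for $B$. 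Comparing with formula~\eqref{eq:tensors0} for $(A\circ C)\tensor(B\circ C)$ evaluated at $\mathbf{s}$ then matches the two coproducts summand by summand. Naturality in $A,B,C$ is automatic because each identification is natural.

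The main obstacle is the equivariance bookkeeping running through this chain: one must check that the reindexing genuinely respects the ambient symmetric group actions, so that the orbit/coinvariant manipulations are valid rather than merely true after forgetting the actions. Equivalently, this is a Fubini statement for the defining coends along the coproduct functor $\amalg\colon\Sigma\times\Sigma\rarrow\Sigma$, whose content is that taking $\Sigma_t$-coinvariants commutes with the decomposition over $\mathbf{t}=\mathbf{t_1}\amalg\mathbf{t_2}$. The ordered case is strictly easier, since there are no symmetric group actions to track: a function $\mathbf{m}\rarrow\mathbf{n}$ in $\OrdSet$ splits uniquely along $\mathbf{n}=\mathbf{n_1}\amalg\mathbf{n_2}$ into order-preserving maps, and the analogous computation using formula~\eqref{eq:tensors3} and Proposition~\ref{prop:circ_calc} goes through with every coend replaced by a plain coproduct.
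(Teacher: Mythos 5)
Your proposal is correct and follows essentially the same route as the paper, whose proof is the one-line observation that the symmetric case ``follows from \eqref{eq:tensors1} and \eqref{eq:circ_calc}''; you have simply supplied the details that citation compresses, namely the induction--restriction identity collapsing the $\Sigma_t$-coinvariants, the equivariant splitting of $C[\mathbf{s},\mathbf{t}]$ along $\mathbf{t}=\mathbf{t_1}\amalg\mathbf{t_2}$, and the reduction of the tensor-power isomorphism to distributivity plus $1\circ C\Iso 1$ by induction on $t$. All of these steps check out, including the equivariance bookkeeping you flag as the main obstacle, so no changes are needed.
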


\begin{proof}
The case for symmetric sequences follows from \eqref{eq:tensors1} and \eqref{eq:circ_calc}, and the argument for sequences is similar.
\end{proof}

The following is a special case of particular interest. The right-hand isomorphisms may be regarded as a motivating property for the tensor products.

\begin{prop}
Let $A,B$ be symmetric sequences and let $X,Y$ be sequences. Suppose $Z\in\C$ and $t\geq 0$. There are natural isomorphisms
\begin{align*}
  (A\tensor B)\circ(Z)& \Iso (A\circ(Z))\tensor (B\circ(Z)),&
  (B^{\tensor t})\circ(Z)& \Iso (B\circ(Z))^{\tensor t},\\
  (X\hat{\tensor} Y)\circhat(Z)& \Iso (X\circhat(Z))
  \tensor(Y\circhat(Z)),&
  (Y^{\hat{\tensor} t})\circhat(Z)& \Iso (Y\circhat(Z))^{\tensor t}.
\end{align*}
\end{prop}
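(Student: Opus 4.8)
The plan is to deduce all four isomorphisms from Proposition~\ref{prop:circle_and_powers} by forming the circle product with $\hat{Z}$ and then evaluating at $\mathbf{0}$. The essential bridge is \eqref{eq:useful_relation_for_conc_at_zero}, which identifies $A\circ(Z)$ with $(A\circ\hat{Z})[\mathbf{0}]$, together with its analog $X\circhat(Z)\Iso(X\circhat\hat{Z})[\mathbf{0}]$ for sequences.

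First I would record how the two tensor products and the tensor powers behave under evaluation at $\mathbf{0}$. Since the only function $\function{\pi}{\mathbf{0}}{\mathbf{2}}$ in $\Set$ is the empty function, \eqref{eq:tensors0} collapses to a single summand and yields $(P\tensor Q)[\mathbf{0}]\Iso P[\mathbf{0}]\tensor Q[\mathbf{0}]$ for symmetric sequences $P,Q$; the same argument applied to \eqref{eq:tensors2} gives $(P\hat{\tensor}Q)[\mathbf{0}]\Iso P[\mathbf{0}]\tensor Q[\mathbf{0}]$ for sequences, where the product on the right is the tensor product of $\C$. In the same spirit, the identity $(A^{\tensor t})[\mathbf{0}]\Iso A[\mathbf{0}]^{\tensor t}$ recorded after Definition~\ref{def:tensor_powers}, together with its evident analog for sequences, takes care of the tensor powers.

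With these in hand, the first isomorphism is the chain
\begin{align*}
  (A\tensor B)\circ(Z)
  &\Iso((A\tensor B)\circ\hat{Z})[\mathbf{0}]
  \Iso\bigl((A\circ\hat{Z})\tensor(B\circ\hat{Z})\bigr)[\mathbf{0}]\\
  &\Iso(A\circ\hat{Z})[\mathbf{0}]\tensor(B\circ\hat{Z})[\mathbf{0}]
  \Iso(A\circ(Z))\tensor(B\circ(Z)),
\end{align*}
which uses \eqref{eq:useful_relation_for_conc_at_zero}, then the first isomorphism of \eqref{eq:tensor_power_sym} with $C=\hat{Z}$, then the evaluation formula above, and finally \eqref{eq:useful_relation_for_conc_at_zero} once more. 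The tensor power isomorphism follows the same pattern, using instead the second isomorphism of \eqref{eq:tensor_power_sym} and the evaluation formula $(A^{\tensor t})[\mathbf{0}]\Iso A[\mathbf{0}]^{\tensor t}$. The two statements for sequences are proved identically after replacing $\circ$ by $\circhat$ and $\tensor$ by $\hat{\tensor}$ inside the brackets and invoking the sequence versions of the cited results; the one feature worth noticing is that $\hat{\tensor}$ turns into the ambient product $\tensor$ of $\C$ upon evaluation at $\mathbf{0}$, which is exactly why the right-hand sides of the last two isomorphisms are written with $\tensor$.

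Every isomorphism in these chains is natural in its arguments, so the resulting composites are natural in $A,B$ (resp.\ $X,Y$) and in $Z$. No step presents a genuine difficulty; the only thing demanding attention is the bookkeeping---keeping straight which tensor symbol refers to the monoidal product on $\SymSeq$ or $\Seq$ and which to the product on $\C$, and checking that the unique (order-preserving) map out of $\mathbf{0}$ really does reduce the coproducts in \eqref{eq:tensors0} and \eqref{eq:tensors2} to a single term.
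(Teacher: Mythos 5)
Your proof is correct and takes essentially the same route as the paper: the paper's own proof is the one-line remark that the statement follows from Proposition~\ref{prop:circle_and_powers} using the embedding \eqref{eq:embedding_at_zero}, and your chain of isomorphisms via $\hat{Z}$, \eqref{eq:useful_relation_for_conc_at_zero}, and evaluation at $\mathbf{0}$ is exactly the intended expansion of that remark.
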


\begin{proof}
This follows from Proposition~\ref{prop:circle_and_powers} using the embedding \eqref{eq:embedding_at_zero}.
\end{proof}

\begin{prop}\label{prop:circ_assoc}
Let $A,B,C$ be symmetric sequences and let $X,Y,Z$ be sequences. There are natural isomorphisms
\begin{align*}
  (A\circ B)\circ C \Iso A\circ (B\circ C),\quad\quad
  (X\circhat Y)\circhat Z \Iso X\circhat (Y\circhat Z).
\end{align*}
\end{prop}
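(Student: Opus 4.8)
The plan is to verify each isomorphism objectwise and reduce the associativity of $\circ$ (resp. $\circhat$) to the distributivity relation $(B^{\tensor t})\circ C\Iso(B\circ C)^{\tensor t}$ just established in Proposition~\ref{prop:circle_and_powers}. I would treat the symmetric sequence case in full; the argument for sequences is identical after replacing $\Sigma,\tensor,\circ$ by $\Omega,\tensorhat,\circhat$ and invoking the ordered-set analogue of Proposition~\ref{prop:circle_and_powers}. The starting point is the objectwise description of the circle product from Proposition~\ref{prop:circ_calc}, which exhibits $A\circ B$ as the symmetric sequence $\coprod_{t\geq 0}A[\mathbf{t}]\tensor_{\Sigma_t}B^{\tensor t}$, together with the same formula applied to $A\circ(B\circ C)$.

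The computation I would carry out is
\begin{align*}
(A\circ B)\circ C
&\Iso \Bigl(\coprod_{t\geq 0} A[\mathbf{t}]\tensor_{\Sigma_t} B^{\tensor t}\Bigr)\circ C
\Iso \coprod_{t\geq 0}\bigl(A[\mathbf{t}]\tensor_{\Sigma_t} B^{\tensor t}\bigr)\circ C\\
&\Iso \coprod_{t\geq 0} A[\mathbf{t}]\tensor_{\Sigma_t}\bigl(B^{\tensor t}\circ C\bigr)
\Iso \coprod_{t\geq 0} A[\mathbf{t}]\tensor_{\Sigma_t}(B\circ C)^{\tensor t}
\Iso A\circ(B\circ C).
\end{align*}
Here the first isomorphism is Proposition~\ref{prop:circ_calc}; the second and third commute the functor $-\circ C$ past the coproduct, past the $\Sigma_t$-orbits $\tensor_{\Sigma_t}$, and past the copower/objectwise tensor by the fixed object $A[\mathbf{t}]$; the fourth is exactly Proposition~\ref{prop:circle_and_powers}; and the last is Proposition~\ref{prop:circ_calc} again, now with $B$ replaced by $B\circ C$. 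The genuine content sits entirely in Proposition~\ref{prop:circle_and_powers}, so once the interchanges in the middle are justified the result is immediate.

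The one point requiring care --- and hence the main obstacle --- is justifying that $-\circ C$ commutes with the colimits (the coproduct, the $\Sigma_t$-orbit coend, and the copower by $A[\mathbf{t}]$) appearing in the second and third isomorphisms. I would establish this directly from the coend defining $\circ$ in Definition~\ref{def:circle}: since $(P\circ C)[\mathbf{s}]=P\tensor_\Sigma C[\mathbf{s},-]$ is a coend of $P[-]\tensor C[\mathbf{s},-]$, and $\tensor$ preserves colimits in each variable while colimits commute with colimits, the functor $-\circ C$ preserves all colimits; this makes the interchanges legitimate without appealing to the mapping-object adjunction for $\circ$. The remaining work is bookkeeping: checking that the resulting objectwise isomorphisms are natural in $S\in\Sigma^{\op}$, so that the whole chain assembles into an isomorphism of symmetric sequences, and tracking that the coends may be computed over the small skeleton $\Sigma'$ as in Proposition~\ref{prop:circ_calc}. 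None of this is difficult, but naturality must be stated explicitly to conclude. The sequence case follows verbatim, the only simplification being that the orbit $\tensor_{\Sigma_n}$ is absent since $\Omega'$ has only identity morphisms.
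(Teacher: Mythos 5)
Your proposal is correct and follows essentially the same route as the paper: both reduce associativity to the isomorphism $(B^{\tensor t})\circ C\Iso(B\circ C)^{\tensor t}$ of Proposition~\ref{prop:circle_and_powers} combined with the objectwise coproduct description of Proposition~\ref{prop:circ_calc}, the only cosmetic difference being that the paper runs the chain starting from $A\circ(B\circ C)$ while you start from $(A\circ B)\circ C$. Your explicit justification that $-\circ C$ preserves colimits (which the paper leaves implicit, and which also follows from $-\circ C$ being a left adjoint by Proposition~\ref{prop:adjunction_circ}) is a reasonable addition but not a change of method.
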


\begin{proof}
Using \eqref{eq:circ_calc} and \eqref{eq:tensor_power_sym}, there are natural isomorphisms
\begin{align*}
  A\circ(B\circ C)&\Iso\coprod\limits_{t\geq 0} A[\mathbf{t}]\tensor_{\Sigma_t}
  (B\circ C)^{\tensor t}
  \Iso \coprod\limits_{t\geq 0} A[\mathbf{t}]\tensor_{\Sigma_t}
  (B^{\tensor t})\circ C\\
  &\Iso\coprod\limits_{s\geq 0}\coprod\limits_{t\geq 0} 
  A[\mathbf{t}]\tensor_{\Sigma_t}(B^{\tensor t})[\mathbf{s}]\tensor_{\Sigma_s}
  C^{\tensor s}
  \Iso (A\circ B)\circ C,
\end{align*}
and a similar argument verifies the case for sequences.
\end{proof}

\subsection{Mapping sequences}

Let $B,C$ be symmetric sequences and $T\in\Sigma$. Let $Y,Z$ be sequences and $N\in\Omega$. There are functors
\begin{align*}
  \Sigma\times\Sigma^\op\rarrow\C,&\quad\quad
  (S,S')\longmapsto \Map(B[S,T],C[S']),\\
  \Omega\times\Omega^\op\rarrow\C,&\quad\quad
  (M,M')\longmapsto \Map(Y\langle M,N\rangle,Z[M']),
\end{align*}
which are useful for defining mapping objects of $(\SymSeq,\circ,I)$ and $(\Seq,\circhat,I)$.

\begin{defn}\label{def:map_seq} Let $B,C$ be symmetric sequences and $T\in\Sigma$. Let $Y,Z$ be sequences and $N\in\Omega$. The \emph{mapping sequences} $\Map^\circ(B,C)\in\SymSeq$ and $\Map^{\hat{\circ}}(Y,Z)\in\Seq$ are defined objectwise by the ends
\begin{align*}
  \Map^\circ(B,C)[T]\ &:=\ \Map((B^{\tensor T})[-],C)^\Sigma \ = \ 
  \Map(B[-,T],C)^\Sigma, \\
  \Map^{\hat{\circ}}(Y,Z)[N]\ &:=\ \Map((Y^{\tensorhat N})[-],Z)^\Omega 
  \ = \ \Map(Y\langle -,N\rangle,Z)^\Omega.
\end{align*}
\end{defn}

Hence $\Map^\circ(B,C)$ satisfies objectwise the universal property
\begin{align}\label{eq:mapping_sequence_universal}
\xymatrix{
   & & \Map(B[S,T],C[S])\ar[d]^-{([\id,\id],\zeta^*)} & S \\
   \cdot\ar@(u,l)[urr]^-{f[S]}
   \ar@(d,l)[drr]_-{f[S']}
   \ar@{.>}[r]^-{\bar{f}}_-{\existsunique}
   & \Map^\circ(B,C)[T]\ar@/^1pc/[ur]_-{\tau[S]}
  \ar@/_1pc/[dr]^-{\tau[S']}
  & \Map(B[S,T],C[S']) & \\
   & & \Map(B[S',T],C[S'])\ar[u]_-{([\zeta,\id],\id)} 
   & S'\ar[uu]_{\zeta}   
}
\end{align}
that each wedge $f$ factors uniquely through the terminal wedge $\tau$ of $\Map^\circ(B,C)[T]$. A similar universal property is satisfied objectwise by the mapping objects $\Map^{\hat{\circ}}(Y,Z)$. 

\begin{prop}\label{prop:calc_map_circ}
Let $B,C$ be symmetric sequences and $T\in\Sigma$, with $t:=|T|$. Let $Y,Z$ be sequences and $N\in\Omega$, with $n:=|N|$. There are natural isomorphisms,
\begin{align}
  \label{eq:mapping_calc}
  \Map^\circ(B,C)[\mathbf{t}]
  &\Iso \prod_{s\geq 0}\Map((B^{\tensor \mathbf{t}})[\mathbf{s}],
  C[\mathbf{s}])^{\Sigma_s}\Iso
  \prod_{s\geq 0}\Map(B[\mathbf{s},\mathbf{t}],
  C[\mathbf{s}])^{\Sigma_s},\\
  \notag
  \Map^{\hat{\circ}}(Y,Z)[\mathbf{n}]
  &\Iso
  \prod_{m\geq 0}\Map((Y^{\hat{\tensor} \mathbf{n}})[\mathbf{m}],
  Z[\mathbf{m}])\Iso
  \prod_{m\geq 0}\Map(Y\langle\mathbf{m},\mathbf{n}\rangle,Z[\mathbf{m}]).
\end{align}
\end{prop}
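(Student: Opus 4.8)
The plan is to imitate the proof of Proposition~\ref{prop:calc_map_tensor_object} almost verbatim, the only new ingredient being the bookkeeping of the diagonal conjugation action. First I would recall that, by Definition~\ref{def:map_seq}, $\Map^\circ(B,C)[\mathbf{t}]$ is the end over $\Sigma$ of the functor $(S,S')\mapsto\Map(B[S,\mathbf{t}],C[S'])$, so that it is characterized as the terminal wedge in the universal property \eqref{eq:mapping_sequence_universal}. Since $\Sigma'\subsetof\Sigma$ is a small skeleton, this end may be computed over $\Sigma'$, where it takes the form of an equalizer
\begin{align*}
  \Map^\circ(B,C)[\mathbf{t}]\Iso\lim\biggl(
  \prod_{\substack{S\\ \text{in}\,\Sigma'}}\Map(B[S,\mathbf{t}],C[S])
  \ \rightrightarrows\
  \prod_{\substack{\function{\zeta}{S'}{S}\\ \text{in}\,\Sigma'}}
  \Map(B[S,\mathbf{t}],C[S'])
  \biggr),
\end{align*}
with the two parallel maps induced by post-composition with $\zeta^*$ and by pre-composition with $[\zeta,\id]$, exactly matching the two legs of the wedge diagram \eqref{eq:mapping_sequence_universal}.

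The key simplification is that every morphism in $\Sigma$, hence in $\Sigma'$, is an isomorphism, so a morphism $\function{\zeta}{S'}{S}$ in $\Sigma'$ exists only when $S=S'=\mathbf{s}$, in which case $\zeta$ ranges over the automorphism group $\Sigma_s$. The outer product therefore breaks up as a product over $s\geq 0$, and for each fixed $s$ the equalizer reduces to
\begin{align*}
  \Map(B[\mathbf{s},\mathbf{t}],C[\mathbf{s}])\ \rightrightarrows\
  \prod_{\zeta\in\Sigma_s}\Map(B[\mathbf{s},\mathbf{t}],C[\mathbf{s}]),
\end{align*}
whose limit is precisely the $\Sigma_s$-fixed points of $\Map(B[\mathbf{s},\mathbf{t}],C[\mathbf{s}])$ for the diagonal conjugation action. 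This yields $\Map^\circ(B,C)[\mathbf{t}]\Iso\prod_{s\geq 0}\Map(B[\mathbf{s},\mathbf{t}],C[\mathbf{s}])^{\Sigma_s}$, and the first displayed isomorphism in \eqref{eq:mapping_calc} then follows from the definitional identity $(B^{\tensor\mathbf{t}})[\mathbf{s}]=B[\mathbf{s},\mathbf{t}]$ of Definition~\ref{def:tensor_powers}.

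For the case of sequences the same argument applies with $\Omega'$ in place of $\Sigma'$, and here the indexing is even simpler: the only morphisms in $\Omega'$ are identities, so there are no nontrivial automorphisms and hence no fixed points to take. The equalizer collapses to the bare product $\prod_{m\geq 0}\Map(Y\langle\mathbf{m},\mathbf{n}\rangle,Z[\mathbf{m}])$, which accounts for the absence of a group-action superscript in the sequences formula. Naturality of each isomorphism is inherited from the functoriality of the end, as in Proposition~\ref{prop:calc_map_tensor_object}. The main obstacle, such as it is, lies not in any difficult construction but in correctly tracking the variances of $B[-,\mathbf{t}]$ and $C[-]$ through the mapping object and in confirming that the end over the one-object groupoid $\Sigma_s$ is computed by the diagonal fixed points; once the variances in \eqref{eq:mapping_sequence_universal} are matched against the two equalizer maps, the calculation is routine.
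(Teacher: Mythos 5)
Your proof is correct and takes essentially the same route as the paper's: both restrict the end defining $\Map^\circ(B,C)[\mathbf{t}]$ to the small skeleton $\Sigma'$, write it as an equalizer of two products indexed by objects and morphisms of $\Sigma'$, and use the fact that all morphisms in $\Sigma'$ are isomorphisms to decompose the equalizer into a product of $\Sigma_s$-fixed points (with the $\Omega'$ case collapsing to a bare product since its only morphisms are identities). The paper compresses the final decomposition into ``this verifies \eqref{eq:mapping_calc}'' where you spell it out, but the argument is the same.
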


\begin{proof}
Using the universal property \eqref{eq:mapping_sequence_universal} for $\Map^\circ(B,C)[T]$ and restricting to a small skeleton $\Sigma'\subsetof\Sigma$, it is easy to obtain natural isormorphisms
\begin{align*}
  \Map^\circ(B,C)[\mathbf{t}]&\Iso\lim
  \biggl( 
  \xymatrix{
  \prod_{\substack{S\\ \text{in}\,\Sigma'}}\limits
  \Map(B[S,T],C[S])\ar@<2.5ex>[r]\ar@<1.5ex>[r] & 
  \prod_{\substack{\function{\zeta}{S'}{S}\\ \text{in}\,\Sigma'}}\limits 
  \Map(B[S,T],C[S'])
  }
  \biggr).
\end{align*}
This verifies \eqref{eq:mapping_calc} and the case for sequences is similar.
\end{proof}

These constructions give functors
\begin{align*}
  \SymSeq^\op\times\SymSeq\rarrow\SymSeq,&\quad\quad
  (B,C)\longmapsto\Map^\circ(B,C),\\
  \Seq^\op\times\Seq\rarrow\Seq,&\quad\quad
  (Y,Z)\longmapsto\Map^{\hat{\circ}}(Y,Z).
\end{align*}

\begin{prop}\label{prop:adjunction_circ}
Let $A,B,C$ be symmetric sequences and let $X,Y,Z$ be sequences. There are isomorphims 
\begin{align}\label{eq:adjunction_circ}
  \hom(A\circ B,C)&\Iso\hom(A,\Map^\circ(B,C)),\\
  \hom(X\circhat Y,Z)&\Iso\hom(X,\Map^{\hat{\circ}}(Y,Z)),
\end{align}
natural in $A,B,C$ and $X,Y,Z$.
\end{prop}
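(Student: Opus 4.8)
The plan is to follow the pattern of the proof of Proposition~\ref{prop:adjunction_tensor_object}, chaining the universal property of the coend defining $A\circ B$, the universal property of the end defining $\Map^\circ(B,C)$, and the closed-structure adjunction \eqref{eq:adjunction_C} of $\C$. I would carry out \eqref{eq:adjunction_circ} in detail; the case of sequences is entirely analogous, with $\Sigma$ replaced by $\Omega$ and the coends and ends over $\Sigma$ replaced by the corresponding (simpler) constructions over $\Omega$, whose small skeleton $\Omega'$ has only identity morphisms.

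First I would unwind the left-hand side. A map $A\circ B\to C$ in $\SymSeq=\C^{\Sigma^\op}$ is a natural transformation, hence a family of maps $(A\circ B)[S]\to C[S]$ in $\C$, natural in $S\in\Sigma^\op$. Since $(A\circ B)[S]$ is the coend $A\tensor_\Sigma B[S,-]$, the universal property \eqref{eq:circle_product_universal} identifies each such map with an initial wedge $\{A[T]\tensor B[S,T]\to C[S]\}_T$, dinatural in $T$. Applying the closed adjunction \eqref{eq:adjunction_C} objectwise transposes each component to a map $A[T]\to\Map(B[S,T],C[S])$. Symmetrically, a map $A\to\Map^\circ(B,C)$ is a family $A[T]\to\Map^\circ(B,C)[T]$ natural in $T$; since $\Map^\circ(B,C)[T]$ is the end $\Map(B[-,T],C)^\Sigma$, the universal property \eqref{eq:mapping_sequence_universal} identifies it with a family $A[T]\to\Map(B[S,T],C[S])$ forming a terminal wedge (dinatural in $S$) and natural in $T$.

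The heart of the argument, which I expect to be the main though routine obstacle, is to check that these two packages of conditions on the common family $\{A[T]\to\Map(B[S,T],C[S])\}_{S,T}$ agree under the transposition of \eqref{eq:adjunction_C}. Observe that the role played by each of $S$ and $T$ --- internal coend/end variable versus output grading --- is interchanged between the two sides: on the left $T$ is the coend variable and $S$ the grading, whereas on the right $S$ is the end variable and $T$ the grading. Because \eqref{eq:adjunction_C} is natural in all three of its arguments, the initial-wedge (dinaturality in $T$) condition of \eqref{eq:circle_product_universal} is carried exactly to the naturality-in-$T$ condition for $A\to\Map^\circ(B,C)$, and the naturality-in-$S$ condition for $A\circ B\to C$ is carried exactly to the terminal-wedge (dinaturality in $S$) condition of \eqref{eq:mapping_sequence_universal}. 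Verifying these two correspondences is a direct diagram chase over the bijections of $\Sigma$, straightforward once the universal properties \eqref{eq:circle_product_universal} and \eqref{eq:mapping_sequence_universal} are written out. Finally, as every step is a natural bijection and \eqref{eq:adjunction_C} is natural in $A,B,C$, the composite bijection $\hom(A\circ B,C)\cong\hom(A,\Map^\circ(B,C))$ is natural in $A,B,C$, as required.
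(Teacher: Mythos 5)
Your proposal is correct and follows exactly the paper's own route: the paper's proof likewise combines the universal property \eqref{eq:circle_product_universal} of the coend defining $A\circ B$, the universal property \eqref{eq:mapping_sequence_universal} of the end defining $\Map^\circ(B,C)$, and the closed-structure adjunction \eqref{eq:adjunction_C}, leaving the matching of the (di)naturality conditions as a routine check. Your write-up simply makes explicit the interchange of the roles of $S$ and $T$ that the paper leaves implicit.
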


\begin{proof}
Using the universal properties \eqref{eq:circle_product_universal} together with \eqref{eq:mapping_sequence_universal} and the natural correspondence \eqref{eq:adjunction_C}, it is easy to verify that giving a map $A\circ B\rarrow C$ is the same as giving a map $A\rarrow\Map^\circ(B,C)$, and that the resulting correspondence is natural. A similar argument verifies the case for  sequences.
\end{proof}

\subsection{Monoidal structures}

\begin{prop}\label{prop:circ_monoidal}
$(\SymSeq,\circ,I)$ and $(\Seq,\hat{\circ},I)$ have the structure of closed monoidal categories with all small limits and colimits. Circle product is not symmetric. 
\end{prop}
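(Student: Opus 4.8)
The plan is to assemble the monoidal structure from the pieces already established, and then to verify associativity, unitality, and the closed structure separately. For the symmetric-sequence case, the bifunctor $\functor{\circ}{\SymSeq\times\SymSeq}{\SymSeq}$ is already in hand from Definition~\ref{def:circle}, and the unit object $I$ has been identified. First I would establish the unit isomorphisms $I\circ A\Iso A$ and $A\circ I\Iso A$; these are already recorded in Proposition~\ref{prop:useful_calculations_for_circ}, so it remains only to check naturality in $A$ and that they are compatible with the triangle identity relating the unitors to the associator. Associativity is supplied by Proposition~\ref{prop:circ_assoc}, which gives the natural isomorphism $(A\circ B)\circ C\Iso A\circ(B\circ C)$. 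The closed structure --- the existence of a right adjoint $\Map^\circ(B,-)$ to $-\circ B$ --- follows from the adjunction isomorphism \eqref{eq:adjunction_circ} of Proposition~\ref{prop:adjunction_circ}, which is natural in all three variables. Finally, all small limits and colimits exist because they are computed objectwise in $\C$, which has all small limits and colimits by Basic Assumption~\ref{BasicAssumption}, and the objectwise formula \eqref{eq:circ_calc} together with the fact that $-\circ B$ is a left adjoint shows that circle product preserves colimits in its left variable.

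The substantive remaining task, and the one I expect to be the main obstacle, is the \emph{coherence} verification: one must check that the associator from Proposition~\ref{prop:circ_assoc} and the two unitors satisfy Mac~Lane's pentagon and triangle axioms (see \cite[VII]{MacLane_categories}). The individual isomorphisms have all been produced, but a monoidal category requires that these coherence diagrams commute. My approach would be to reduce each coherence diagram to an objectwise statement via the calculation \eqref{eq:circ_calc}, expressing each fourfold composite $((A\circ B)\circ C)\circ D$ and its reassociations as iterated coproducts indexed by chains of maps of finite sets, and then to observe that both paths around the pentagon induce the same canonical identification on these index sets and on the objectwise tensor factors in $\C$. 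Because the associator in Proposition~\ref{prop:circ_assoc} is built entirely from the associativity and symmetry coherence already valid in $(\C,\tensor,\unit)$ and in $(\SymSeq,\tensor,1)$ (Proposition~\ref{prop:tensor_monoidal}), together with the interchange isomorphisms of Proposition~\ref{prop:circle_and_powers}, the pentagon for $\circ$ follows by unwinding to the pentagon and hexagon coherence already known to hold in these symmetric monoidal structures.

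For the assertion that $\circ$ is \emph{not} symmetric, it suffices to exhibit a single pair $A,B$ with $A\circ B\not\Iso B\circ A$. Here I would compute using \eqref{eq:circ_calc}: the term $(A\circ B)[\mathbf{0}]\Iso A\circ(B[\mathbf{0}])$ recorded in Proposition~\ref{prop:useful_calculations_for_circ} shows that the value of $A\circ B$ at $\mathbf{0}$ depends on $A$ through all of its terms $A[\mathbf{t}]$ weighted by tensor powers of $B[\mathbf{0}]$, whereas $B\circ A$ at $\mathbf{0}$ depends symmetrically on $B$ and $A[\mathbf{0}]$; choosing $A$ and $B$ so that these two objects of $\C$ differ (for example $A$ concentrated in a single high arity and $B$ concentrated at $\mathbf{0}$) produces the desired asymmetry. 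The entire argument then transfers \emph{mutatis mutandis} to $(\Seq,\circhat,I)$, using the sequence analogues of Propositions~\ref{prop:circ_assoc}, \ref{prop:adjunction_circ}, and \ref{prop:circle_and_powers}, which the excerpt states hold by a similar argument in each case.
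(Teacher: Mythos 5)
Your proposal follows essentially the same route as the paper's proof: the monoidal coherence is verified via the objectwise calculation \eqref{eq:circ_calc} and the already-established properties of $\tensor$ from Proposition~\ref{prop:tensor_monoidal}, closedness comes from Proposition~\ref{prop:adjunction_circ}, and limits and colimits are computed objectwise. You supply somewhat more detail than the paper (an explicit coherence strategy and a concrete witness for non-symmetry, which the paper merely asserts), but the underlying argument is the same and is correct.
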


\begin{proof}
Consider the case of symmetric sequences. To verify the monoidal structure, it is easy to use \eqref{eq:circ_calc} along with properties of $\tensor$ from Proposition~\ref{prop:tensor_monoidal} to describe the required natural isomorphisms and to verify the appropriate diagrams commute. Proposition~\ref{prop:adjunction_circ} verifies the monoidal structure is closed. Limits and colimits are calculated objectwise. A similar argument verifies the case for sequences.
\end{proof}

The following calculations follow directly from Proposition~\ref{prop:calc_map_circ}; similar calculations are true for sequences.

\begin{prop}\label{prop:map_seq_calc}
Let $B,C$ be symmetric sequences, $t\geq 0$, and $Z\in\C$. There are natural isomorphisms,
\begin{align}
  \notag
  \Map^\circ(B,*)&\Iso *,\\
  \notag
  \Map^\circ(\emptyset,C)[\mathbf{t}]&\Iso
  \left\{
  \begin{array}{rl}
  C[\mathbf{0}], & \text{for $t=0$,}\\
  *, & \text{otherwise,}
  \end{array}
  \right.\\
  \notag
  \Map^\circ(B,C)[\mathbf{0}]&\Iso C[\mathbf{0}],\\
  \notag
  \Map^\circ(B,C)[\mathbf{1}]&\Iso \prod_{s\geq 0}
  \Map(B[\mathbf{s}],C[\mathbf{s}])^{\Sigma_s},\\
  \label{eq:map_circle_with_hat_construction}
  \Map^\circ(\hat{Z},C)[\mathbf{t}]&\Iso\Map(Z^{\tensor t},C[\mathbf{0}]).
\end{align}
\end{prop}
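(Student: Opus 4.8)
The plan is to derive every isomorphism from the objectwise formula $\Map^\circ(B,C)[\mathbf{t}]\Iso\prod_{s\geq 0}\Map(B[\mathbf{s},\mathbf{t}],C[\mathbf{s}])^{\Sigma_s}$ of Proposition~\ref{prop:calc_map_circ}, combined with three elementary facts about the mapping object in $\C$ that follow at once from the adjunction \eqref{eq:adjunction_C} together with the fact that tensoring preserves the initial object: $\Map(\emptyset,Z)\Iso *$ (since $X\tensor\emptyset\Iso\emptyset$ and $\hom(\emptyset,Z)$ is a point), $\Map(Y,*)\Iso *$ (mapping into a terminal object is terminal), and $\Map(\unit,Z)\Iso Z$. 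Since the fixed points of a terminal object are terminal and a product of terminal objects is terminal, any factor indexed by an $s$ for which $B[\mathbf{s},\mathbf{t}]$ is initial or $C[\mathbf{s}]$ is terminal simply drops out of the product. Thus all of the work reduces to identifying the tensor power $B[\mathbf{s},\mathbf{t}]=(B^{\tensor\mathbf{t}})[\mathbf{s}]$ from Definition~\ref{def:tensor_powers} in each special case.

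First I would treat $\Map^\circ(B,*)$: because $*[\mathbf{s}]\Iso *$ for every $s$ (limits in $\SymSeq$ are objectwise), each factor $\Map(B[\mathbf{s},\mathbf{t}],*)^{\Sigma_s}$ is terminal, so the whole product is terminal for every $t$. For $\Map^\circ(B,C)[\mathbf{0}]$ I use $B[\mathbf{s},\mathbf{0}]=(B^{\tensor\emptyset})[\mathbf{s}]$, which by definition is $\unit$ when $s=0$ and initial when $s\geq 1$; the $s=0$ factor contributes $\Map(\unit,C[\mathbf{0}])\Iso C[\mathbf{0}]$ and every $s\geq 1$ factor is terminal, leaving $C[\mathbf{0}]$. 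For $\Map^\circ(B,C)[\mathbf{1}]$ I use $B[\mathbf{s},\mathbf{1}]\Iso B[\mathbf{s}]$ (the isomorphism $A^{\tensor 1}\Iso A$), so no factor drops and the product is exactly $\prod_{s\geq 0}\Map(B[\mathbf{s}],C[\mathbf{s}])^{\Sigma_s}$.

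The two isomorphisms involving $\emptyset$ and $\hat{Z}$ require reading off from Definition~\ref{def:tensor_powers} which functions $\pi\colon\mathbf{s}\to\mathbf{t}$ contribute a surviving summand. For $\emptyset$, every factor $\emptyset[\pi^{-1}(t)]$ is initial, so whenever $t\geq 1$ the tensor $\tensor_{t\in\mathbf{t}}\emptyset[\pi^{-1}(t)]$ is initial and hence $(\emptyset^{\tensor\mathbf{t}})[\mathbf{s}]$ is initial for all $s$, making the product terminal; the case $t=0$ coincides with the $B[\mathbf{s},\mathbf{0}]$ calculation and returns $C[\mathbf{0}]$. For $\hat{Z}$, the factor $\hat{Z}[\pi^{-1}(t)]$ equals $Z$ exactly when $\pi^{-1}(t)=\emptyset$ and is initial otherwise, so a summand survives only when every preimage is empty, which forces $s=0$; the unique surviving summand is then $Z^{\tensor t}$, so only the $s=0$ factor is nontrivial and it contributes $\Map(Z^{\tensor t},C[\mathbf{0}])$.

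The main obstacle is not any single deduction but the consistent bookkeeping of initial and terminal objects through the empty-coproduct and empty-tensor conventions, in particular verifying that the boundary cases $s=0$ and $t=0$ agree with the stated formulas (for example that $\Map^\circ(\hat{Z},C)[\mathbf{0}]\Iso\Map(Z^{\tensor 0},C[\mathbf{0}])\Iso C[\mathbf{0}]$ is consistent, since $Z^{\tensor 0}\Iso\unit$). As indicated in the statement, once the symmetric-sequence cases are settled the corresponding statements for sequences follow by the identical argument using the $\Seq$ half of Proposition~\ref{prop:calc_map_circ}.
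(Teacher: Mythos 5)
Your proposal is correct and follows exactly the route the paper intends: the paper offers no argument beyond the remark that these isomorphisms ``follow directly from Proposition~\ref{prop:calc_map_circ},'' and your derivation is precisely the expansion of that remark, reading off $B[\mathbf{s},\mathbf{t}]$ from Definition~\ref{def:tensor_powers} in each case and discarding the factors that become terminal. The bookkeeping of the boundary cases $s=0$ and $t=0$ is handled correctly.
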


\subsection{Circle products as Kan extensions}
Circle products can also be understood as Kan extensions. 

\begin{defn}
Let $\D,\E$ be categories and denote by $\xymatrix@1{\cdot\rightarrow\cdot}$ the category with exactly two objects and one non-identity morphism as indicated. The \emph{arrow category} $\Arrow(\D):=\D^{\cdot\rightarrow\cdot}$ is the category of $(\cdot\rightarrow\cdot)$-shaped diagrams in $\D$. Denote by $\ISO \E\subsetof\E$ the subcategory of isomorphisms of $\E$.
\end{defn}

Let $\F$ denote the category of finite sets and their maps and let $\OrdF$ denote the category of totally ordered finite sets and their order preserving maps. For each $A,B\in\SymSeq$ and $X,Y\in\Seq$, there are functors defined objectwise by
\begin{align*}
  \function{A*B}{\bigl(\ISO\Ar(\F)\bigr)^\op}{\C},&\quad\quad
  (S\xrightarrow{\pi}T)\longmapsto A[T]\tensor 
  \tensor_{t\in T}B[\pi^{-1}(t)], \\
  \function{X*Y}{\bigl(\ISO\Ar(\OrdF)\bigr)^\op}{\C},&\quad\quad
  (M\xrightarrow{\pi} N)\longmapsto X[N]\tensor 
  \tensor_{n\in N}Y[\pi^{-1}(n)].
\end{align*}
It is easy to check that the circle products are left Kan extensions of $-*-$ along projection onto source,
\begin{align*}
\xymatrix{
  \bigl(\ISO\Ar(\F)\bigr)^{\op}
  \ar[d]^{\mathsf{proj}}\ar[rr]^-{A*B} & & \C \\
  \Sigma^{\op}\ar[rr]^{A\circ B}_{\text{left Kan extension}} & & \C,
} &&
\xymatrix{
  \bigl(\ISO\Ar(\OrdF)\bigr)^{\op}
  \ar[d]^{\mathsf{proj}}\ar[rr]^-{X*Y} & & \C \\
  \Omega^{\op}\ar[rr]^{X\circhat Y}_{\text{left Kan extension}} & & \C.
}
\end{align*}

\section{Algebras and modules over operads}
\label{sec:operads_modules_algebras}

The purpose of this section is to recall certain definitions and constructions involving (symmetric) sequences and algebras and modules over operads, including certain constructions of colimits and limits which will play a fundamental role in several of the main arguments in this paper. A useful introduction to operads and their algebras is given in \cite{Kriz_May}; see also the original article \cite{May}. Other accounts include \cite{Fresse, Fresse_modules, Hinich, Markl_Shnider_Stasheff, McClure_Smith_intro, Rezk, Spitzweck}. The material in this section is largely influenced by \cite{Rezk}.

\begin{defn}\label{defn:operads}\quad
\begin{itemize}
\item A \emph{$\Sigma$-operad} is a monoid object in $(\SymSeq,\circ,I)$ and a \emph{morphism of $\Sigma$-operads} is a morphism of monoid objects in $(\SymSeq,\circ,I)$.
\item An \emph{$\Omega$-operad} is a monoid object in $(\Seq,\hat{\circ},I)$ and a \emph{morphism of $\Omega$-operads} is a morphism of monoid objects in $(\Seq,\hat{\circ},I)$.
\end{itemize}
\end{defn}

These two types of operads were originally defined in \cite{May}; the $\Sigma$-operad has symmetric groups and the $\Omega$-operad is based on ordered sets and is called a non-$\Sigma$ operad \cite{Kriz_May, May}. For a useful introduction to monoid objects, see \cite[VII]{MacLane_categories}.

\begin{example}
More explicitly, for instance, a $\Sigma$-operad is a symmetric sequence $\capO$ together with maps $\function{m}{\capO\circ\capO}{\capO}$ and $\function{\eta}{I}{\capO}$ in $\SymSeq$ which make the diagrams
\begin{align*}
  \xymatrix{
  \capO\circ\capO\circ\capO\ar[r]^-{m\circ\id}\ar[d]^{\id\circ m} & 
  \capO\circ\capO\ar[d]^{m}\\
  \capO\circ\capO\ar[r]^-{m} & \capO
} \quad\quad
\xymatrix{
  I\circ\capO\ar[r]^{\eta\circ\id}\ar[d]^{\iso} & 
  \capO\circ\capO\ar[d]^{m} & \capO\circ I\ar[d]_{\iso}\ar[l]_{\id\circ\eta}\\
  \capO\ar@{=}[r] & \capO & 
  \capO\ar@{=}[l]
}
\end{align*}
commute. If $\capO$ and $\capO'$ are $\Sigma$-operads, then a morphism of $\Sigma$-operads is a map $\function{f}{\capO}{\capO'}$ in $\SymSeq$ which makes the diagrams
\begin{align*}
  \xymatrix{
  \capO\circ\capO\ar[r]^-{m}\ar[d]^{f\circ f} & \capO\ar[d]^{f}\\
  \capO'\circ\capO'\ar[r]^-{m} & \capO'
} \quad\quad
\xymatrix{
  \capO\ar[d]^{f} & I\ar[l]_-{\eta}\ar@{=}[d]\\
  \capO' & I\ar[l]_-{\eta}
  }
\end{align*}
commute.
\end{example}

\subsection{Algebras and modules over operads}
\label{sec:algebras_over_operads}
Similar to the case of any monoid object, we introduce operads because we are interested in the objects they act on. The reader may wish to compare the following definition with \cite[Chapter VI]{MacLane_homology}.

\begin{defn}
Let $Q,R,S$ be $\Sigma$-operads (resp. $\Omega$-operads).
\begin{itemize}
\item A \emph{left $R$-module} is an object in $(\SymSeq,\circ,I)$ (resp. an object in $(\Seq,\hat{\circ},I)$) with a left action of $R$ and a \emph{morphism of left $R$-modules} is a map which respects the left $R$-module structure.
\item A \emph{right $S$-module} is an object in $(\SymSeq,\circ,I)$ (resp. an object in $(\Seq,\hat{\circ},I)$) with a right action of $S$ and a \emph{morphism of right $S$-modules} is a map which respects the right $S$-module structure.
\item An \emph{$(R,S)$-bimodule} is an object in $(\SymSeq,\circ,I)$ (resp. an object in $(\Seq,\hat{\circ},I)$) with compatible left $R$-module and right $S$-module structures and a \emph{morphism of $(R,S)$-bimodules} is a map which respects the $(R,S)$-bimodule structure.
\end{itemize}
\end{defn}

Each $\Sigma$-operad $\capO$ (resp. $\Omega$-operad $\capO$) determines a functor $\function{\capO}{\C}{\C}$ defined objectwise by
\begin{align*}
  \capO(X)&:=\capO\circ(X)
  =\coprod\limits_{t\geq 0}\capO[\mathbf{t}]
  \tensor_{\Sigma_t}X^{\tensor t},\\
  \Bigl(\text{resp.}\quad
  \capO(X)&:=\capO\circhat(X)
  =\coprod\limits_{n\geq 0}\capO[\mathbf{n}]
  \tensor X^{\tensor n}
  \Bigr),
\end{align*}
along with natural transformations $\function{m}{\capO\capO}{\capO}$ and $\function{\eta}{\id}{\capO}$ which give the functor $\function{\capO}{\C}{\C}$ the structure of a monad (or triple) in $\C$. For a useful introduction to monads and their algebras, see \cite[VI]{MacLane_categories}. 

Recall the following definition from \cite[I.2 and I.3]{Kriz_May}. 

\begin{defn}
Let $\capO$ be an operad. An \emph{$\capO$-algebra} is an object in $\C$ with a left action of the monad $\functor{\capO}{\C}{\C}$ and a \emph{morphism of $\capO$-algebras} is a map in $\C$ which respects the left action of the monad $\function{\capO}{\C}{\C}$.
\end{defn}

One perspective offered in \cite[I.3]{Kriz_May} is that operads determine particularly manageable monads. From this perspective, operads correspond to special functors in such a way that circle product corresponds to composition, but because these functors have such simple descriptions in terms of (symmetric) sequences, operads are easier to work with than arbitrary functors.

It is easy to verify that an $\capO$-algebra is the same as an object $X\in\C$ with a left $\capO$-module structure on $\hat{X}$, and if $X$ and $X'$ are $\capO$-algebras, then a morphism of $\capO$-algebras is the same as a map $\function{f}{X}{X'}$ in $\C$ such that $\function{\hat{f}}{\hat{X}}{\hat{X'}}$ is a morphism of left $\capO$-modules. In other words, an algebra over an operad $\capO$ is the same as a left $\capO$-module which is concentrated at $0$. Giving a symmetric sequence $Y$ a left $\capO$-module structure is the same as giving a morphism of operads 
\begin{align}\label{eq:operad_action}
  \function{m}{\capO}{\Map^\circ(Y,Y)}.
\end{align}
Similarly, giving an object $X\in\C$ an $\capO$-algebra structure is the same as giving a morphism of operads
\begin{align*}
  \function{m}{\capO}{\Map^\circ(\hat{X},\hat{X})}.
\end{align*}
This is the original definition given in \cite{May} of an $\capO$-algebra structure on $X$, where $\Map^\circ(\hat{X},\hat{X})$ is called the \emph{endomorphism operad} of $X$, and motivates the suggestion in \cite{Kriz_May, May} that $\capO[\mathbf{t}]$ should be thought of as parameter objects for $t$-ary operations. 

\subsection{Reflexive coequalizers and filtered colimits}
Reflexive coequalizers will be useful for building colimits in the categories of algebras and modules over an operad.

\begin{defn}
\label{defn:reflexive_pair}
A pair of maps of the form $\xymatrix@1{X_0 & X_1\ar@<-0.5ex>[l]_-{d_0}\ar@<+0.5ex>[l]^-{d_1}}$ in $\C$ is called a \emph{reflexive pair} if there exists $\function{s_0}{X_0}{X_1}$ in $\C$ such that $d_0s_0=\id$ and $d_1s_0=\id$. A \emph{reflexive coequalizer} is the coequalizer of a reflexive pair. 
\end{defn}

The following proposition is proved in \cite[Lemma 2.3.2]{Rezk}. Part (a) also follows from the proof of \cite[Proposition II.7.2]{EKMM} or the arguments in \cite[Section 1]{Goerss_Hopkins}.

\begin{prop}\label{prop:colimit_properties_tensor}
\quad
\begin{itemize}
\item[(a)] If  
$\xymatrix@1{X_{-1} & X_0\ar[l] & X_1\ar@<-0.5ex>[l]\ar@<+0.5ex>[l]}$ and 
$\xymatrix@1{Y_{-1} & Y_0\ar[l] & Y_1\ar@<-0.5ex>[l]\ar@<+0.5ex>[l]}$
are reflexive coequalizer diagrams in $\C$, then their objectwise tensor product
\begin{align}
\label{eq:objectwise_tensor_reflexive_coequalizer}
\xymatrix{
  X_{-1}\tensor Y_{-1} & X_0\tensor Y_0\ar[l]& 
  X_1\tensor Y_1\ar@<-0.5ex>[l]\ar@<+0.5ex>[l]
}
\end{align}
is a reflexive coequalizer diagram in $\C$.
\item[(b)] If $\functor{X,Y}{\D}{\C}$ are filtered diagrams, then objectwise tensor product of their colimiting cones is a colimiting cone. In particular, there are natural isomorphisms
\begin{align*}
  \colim_{d\in\D}(X_d\tensor Y_d)\Iso
  (\colim_{d\in\D}X_d)\tensor(\colim_{d\in \D}Y_d).
\end{align*}
in $\C$.
\end{itemize}
\end{prop}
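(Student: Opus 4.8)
The plan is to prove both parts directly from the construction of the relevant colimits, which commute past tensor products because $\tensor$ distributes over the colimits in question. The key observation driving everything is the adjunction \eqref{eq:adjunction_C}, which shows that for each fixed $Y\in\C$ the functor $-\tensor Y$ is a left adjoint (with right adjoint $\Map(Y,-)$), hence preserves all colimits. The subtlety is that the displayed constructions involve tensoring \emph{two} varying objects simultaneously, so one must argue that the \emph{diagonal} tensor still commutes with these particular colimits.

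For part (b), I would first recall that filtered colimits in $\C$ can be computed so that $-\tensor Y$ preserves them (since $-\tensor Y$ is a left adjoint), and symmetrically $X\tensor -$ preserves filtered colimits. The claim is then the standard fact that the diagonal of a bifunctor, each of whose variable-wise restrictions preserves filtered colimits over the \emph{same} filtered shape $\D$, preserves the colimit. Concretely, I would exhibit the natural comparison map $\colim_{d}(X_d\tensor Y_d)\rarrow(\colim_d X_d)\tensor(\colim_d Y_d)$ and check it is an isomorphism by using that $\D$ is filtered: an element of the target factors through some $X_{d}\tensor Y_{d'}$, and filteredness produces a common $d''$ receiving both indices, making the map compatible and invertible. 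This is where the filtered hypothesis is essential, and it is the one genuinely nontrivial point.

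For part (a), the strategy is to reduce to part (b)-style reasoning one variable at a time. Given that $\xymatrix@1{X_{-1} & X_0\ar[l] & X_1\ar@<-0.5ex>[l]\ar@<+0.5ex>[l]}$ is a reflexive coequalizer, I would first tensor on the right with the fixed object $Y_0$ (and separately $Y_1$, $Y_{-1}$), using that $-\tensor Y$ preserves coequalizers since it is a left adjoint. The reflexivity is preserved objectwise: the common section $s_0$ of the first pair tensored with the section of the second pair supplies a section $s_0\tensor s_0$ for \eqref{eq:objectwise_tensor_reflexive_coequalizer}, so the candidate diagram is at least a reflexive pair. The heart of the matter is then to show that the objectwise tensor \eqref{eq:objectwise_tensor_reflexive_coequalizer} is again a coequalizer. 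I would do this by the explicit presentation of a reflexive coequalizer as a specific filtered-type colimit, or more directly by checking the universal property: a cone out of $X_0\tensor Y_0$ that coequalizes the two composite maps factors uniquely through $X_{-1}\tensor Y_{-1}$. The reflexive (rather than arbitrary) coequalizer hypothesis is exactly what makes this factorization work, since the common sections let one manipulate the two face maps symmetrically.

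The main obstacle I anticipate is part (a): unlike filtered colimits, a general coequalizer is \emph{not} preserved by the diagonal of a bifunctor, and it is precisely the reflexivity condition that rescues the argument. I would handle this either by the clean categorical lemma that reflexive coequalizers commute with finite products in $\C$ combined with the adjointness of $-\tensor Y$ in each variable, or by a careful diagram chase using $s_0$ to rewrite the relevant composites. Since the result is quoted as \cite[Lemma 2.3.2]{Rezk} with alternative proofs in \cite{EKMM, Goerss_Hopkins}, I expect the cleanest route is to cite that one needs only: $-\tensor Y$ preserves coequalizers (adjointness), reflexive coequalizers are filtered-like enough to commute with the remaining variable, and then assemble the two steps.
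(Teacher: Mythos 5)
Your proposal matches the paper's proof in essentials: both arguments verify the universal property of the coequalizer directly, using that $-\tensor Y$ preserves colimiting cones in each variable separately (by the adjunction \eqref{eq:adjunction_C}) together with the common sections $s_0$ and the $2\times 2$ grid of objects $X_i\tensor Y_j$ to handle the diagonal, and both treat (b) by the analogous argument with filteredness replacing reflexivity. The only caution is that your ``element of the target factors through some $X_d\tensor Y_{d'}$'' phrasing in (b) should be read as the finality of the diagonal $\D\rarrow\D\times\D$ for filtered $\D$, which is the standard fact you are clearly invoking.
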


\begin{proof}
Consider part (a). We want to verify that \eqref{eq:objectwise_tensor_reflexive_coequalizer} is a coequalizer diagram; it is sufficient to verify the universal property of colimits. Using the diagram
\begin{align*}
\xymatrix{
  X_0\tensor Y_1\ar@<-0.5ex>[d]\ar@<0.5ex>[d]
   & X_1\tensor Y_1\ar@<-0.5ex>[l]\ar@<0.5ex>[l]
  \ar@<-0.5ex>[d]\ar@<0.5ex>[d]   \\
  X_0\tensor Y_0 & X_1\tensor Y_0\ar@<-0.5ex>[l]\ar@<0.5ex>[l]
}
\end{align*}
and the map $X_0\tensor Y_0\rightarrow X_{-1}\tensor Y_{-1}$, together with the maps $s_0$ in Definition \ref{defn:reflexive_pair} and the relations satisfied by the reflexive pairs, it is easy to verify that \eqref{eq:objectwise_tensor_reflexive_coequalizer} satisfies the universal property of a coequalizer diagram; note that tensoring with any $X\in\C$ preserves colimiting cones. Further details are given in the proof of \cite[Proposition II.7.2]{EKMM} and the argument appearing between Definition 1.8 and Lemma 1.9 in \cite[Section 1]{Goerss_Hopkins}. Verification of (b) is similar to (a), except we use the properties satisfied by filtered diagrams instead of reflexive pairs.
\end{proof}

Hence objectwise tensor product of diagrams in $(\C,\tensor,\unit)$ respects certain colimiting cones. Objectwise circle product of diagrams in $(\SymSeq,\circ,I)$ and $(\Seq,\hat{\circ},I)$ behave similarly. The following proposition is proved in \cite[Lemma 2.3.4]{Rezk}. Part (a) also follows from the proof of \cite[Proposition II.7.2]{EKMM} or the arguments in \cite[Section 1]{Goerss_Hopkins}.

\begin{prop}\label{prop:colimit_properties_circle}
\quad
\begin{itemize}
\item[(a)] Suppose 
$\xymatrix@1{A_{-1} & A_0\ar[l] & A_1\ar@<-0.5ex>[l]\ar@<+0.5ex>[l]}$ and 
$\xymatrix@1{B_{-1} & B_0\ar[l] & B_1\ar@<-0.5ex>[l]\ar@<+0.5ex>[l]}$
are reflexive coequalizer diagrams in $\SymSeq$. Then their objectwise circle product
\begin{align}
\label{eq:reflexive_coequalizer}
\xymatrix{
  A_{-1}\circ B_{-1} & A_0\circ B_0\ar[l]& 
  A_1\circ B_1\ar@<-0.5ex>[l]\ar@<+0.5ex>[l]
}
\end{align}
is a reflexive coequalizer diagram in $\SymSeq$.
\item[(b)] Suppose $\functor{A,B}{\D}{\SymSeq}$ are filtered diagrams. Then objectwise circle product of their colimiting cones is a colimiting cone. In particular, there  are natural isomorphisms
\begin{align*}
  \colim_{d\in\D}(A_d\circ B_d)\Iso
  (\colim_{d\in\D}A_d)\circ(\colim_{d\in \D}B_d).
\end{align*}
in $\SymSeq$.
\item[(c)] For sequences, the corresponding statements in (a) and (b) remain true; i.e., when $(\SymSeq,\circ,I)$ is replaced by $(\Seq,\hat{\circ},I)$.
\end{itemize}
\end{prop}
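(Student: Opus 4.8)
The plan is to reduce everything to Proposition~\ref{prop:colimit_properties_tensor}, exactly as the objectwise circle product is built from the objectwise tensor product. The key observation is the calculation in Proposition~\ref{prop:circ_calc}: for symmetric sequences we have
\begin{align*}
  (A\circ B)[\mathbf{s}]\Iso\coprod_{t\geq 0}A[\mathbf{t}]\tensor_{\Sigma_t}(B^{\tensor t})[\mathbf{s}].
\end{align*}
Since colimits in $\SymSeq$ are computed objectwise (Proposition~\ref{prop:tensor_monoidal}), it suffices to fix $\mathbf{s}\in\Sigma'$ and verify that the stated diagram of objects in $\C$, evaluated at $\mathbf{s}$, is a reflexive coequalizer (for part~(a)) or that the colimiting cone is preserved (for part~(b)). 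The coproduct over $t\geq 0$ and the functor $(-)\tensor_{\Sigma_t}(-)$ both commute with colimits in the relevant sense, so the whole problem collapses to understanding the building blocks $A[\mathbf{t}]$ and $(B^{\tensor t})[\mathbf{s}]$.

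For part~(a), I would first record that a coproduct of reflexive coequalizer diagrams in $\C$ is again a reflexive coequalizer diagram (coproducts preserve colimits and a coproduct of reflexive pairs, using the coproduct of the sections $s_0$, is again reflexive); likewise passing to $\Sigma_t$-orbits $(-)_{\Sigma_t}$ is a left adjoint and preserves reflexive coequalizers. So the heart of the matter is to show that
\begin{align*}
  \xymatrix@1{(B_{-1}^{\tensor t})[\mathbf{s}] & (B_0^{\tensor t})[\mathbf{s}]\ar[l] & (B_1^{\tensor t})[\mathbf{s}]\ar@<-0.5ex>[l]\ar@<+0.5ex>[l]}
\end{align*}
is a reflexive coequalizer diagram whenever $B_{-1}\leftarrow B_0\leftleftarrows B_1$ is one. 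By the definition of tensor powers (Definition~\ref{def:tensor_powers}), $(B^{\tensor t})[\mathbf{s}]$ is a coproduct over functions $\pi\colon\mathbf{s}\to\mathbf{t}$ of $t$-fold objectwise tensor products $\tensor_{i}B[\pi^{-1}(i)]$. Here Proposition~\ref{prop:colimit_properties_tensor}(a) enters: it gives the two-variable case, and an easy induction on $t$ extends it to $t$-fold objectwise tensor products of reflexive coequalizer diagrams. Assembling the coproduct over $\pi$, the orbits under $\Sigma_t$, and the coproduct over $t$ then yields the claim for $A$ fixed; finally one also lets $A$ vary, but since $A[\mathbf{t}]\tensor(-)$ preserves colimits in the second variable, varying $A$ through its own reflexive coequalizer diagram reduces again to Proposition~\ref{prop:colimit_properties_tensor}(a) applied to the outer tensor $A[\mathbf{t}]\tensor(\cdots)$.

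For part~(b), the argument is structurally identical but cleaner, since filtered colimits commute with finite products and with the finite tensor products appearing in $(B^{\tensor t})[\mathbf{s}]$; Proposition~\ref{prop:colimit_properties_tensor}(b) supplies the two-variable statement and induction extends it to $t$-fold tensor products. Filtered colimits also commute with the coproducts over $\pi$ and over $t$ and with the orbit functor $(-)_{\Sigma_t}$, giving the natural isomorphism $\colim_d(A_d\circ B_d)\Iso(\colim_d A_d)\circ(\colim_d B_d)$. Part~(c), the case of sequences, is obtained by the same reasoning with $\circ$, $\tensor$, $\Sigma$ replaced by $\circhat$, $\tensorhat$, $\Omega$ throughout, using the second isomorphism of Proposition~\ref{prop:circ_calc} and Definition~\ref{def:tensor_powers}; here there are no symmetric group orbits to track, so the bookkeeping is if anything simpler. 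The main obstacle is purely organizational rather than conceptual: one must be careful that the inductive step for $t$-fold objectwise tensor products genuinely follows from the two-variable Proposition~\ref{prop:colimit_properties_tensor}, and that the section maps $s_0$ can be chosen compatibly so that each intermediate diagram remains reflexive; once that is in hand, every remaining functor in sight preserves the relevant colimiting cones.
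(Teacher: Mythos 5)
Your proposal is correct and follows essentially the same route as the paper's proof, which likewise reduces the statement to Proposition~\ref{prop:colimit_properties_tensor} via the construction of circle products from iterated tensor powers; you have simply made explicit the bookkeeping (induction on the number of tensor factors, commutation with coproducts and $\Sigma_t$-orbits) that the paper leaves to the reader. No gaps.
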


\begin{proof}
This is because circle products are constructed universally from iterated tensor powers. In other words, this follows easily from Proposition \ref{prop:colimit_properties_tensor} together with \eqref{eq:tensor_powers} and \eqref{eq:circle_product_universal}, by verifying the universal property of  colimits. 
\end{proof}

The following is a special case of particular interest.

\begin{prop}\quad
\begin{itemize}
\item[(a)] Suppose 
$\xymatrix@1{A_{-1} & A_0\ar[l] & A_1\ar@<-0.5ex>[l]\ar@<+0.5ex>[l]}$ is a reflexive coequalizer diagram in $\SymSeq$ and 
$\xymatrix@1{Z_{-1} & Z_0\ar[l] & Z_1\ar@<-0.5ex>[l]\ar@<+0.5ex>[l]}$
is a reflexive coequalizer diagram in $\C$. Then their objectwise evaluation 
\begin{align*}
\xymatrix{
  A_{-1}\circ(Z_{-1}) & A_0\circ(Z_0)\ar[l]& 
  A_1\circ(Z_1)\ar@<-0.5ex>[l]\ar@<+0.5ex>[l]
}
\end{align*}
is a reflexive coequalizer diagram in $\C$.
\item[(b)] Suppose $\functor{A}{\D}{\SymSeq}$ and $\functor{Z}{\D}{\C}$ are filtered diagrams. Then objectwise evaluation of their colimiting cones is a colimiting cone. In particular, there  are natural isomorphisms
\begin{align*}
  \colim_{d\in\D}\bigl(A_d\circ(Z_d)\bigr)\Iso
  (\colim_{d\in\D}A_d)\circ(\colim_{d\in \D}Z_d).
\end{align*}
in $\C$.
\item[(c)] For sequences, the corresponding statements in (a) and (b) remain true; i.e., when $(\SymSeq,\circ,I)$ is replaced by $(\Seq,\hat{\circ},I)$.
\end{itemize}
\end{prop}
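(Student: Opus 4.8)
The plan is to reduce the statement about evaluation functors to the already-established results about circle products of symmetric sequences (Proposition~\ref{prop:colimit_properties_circle}). The key observation is that for a symmetric sequence $A$ and an object $Z\in\C$, the evaluation $A\circ(Z)$ is computed via the embedding $\functor{\hat{-}}{\C}{\SymSeq}$ of $\C$ into $\SymSeq$ as sequences concentrated at $0$. Specifically, by \eqref{eq:useful_relation_for_conc_at_zero} we have the natural isomorphism $A\circ(Z)\Iso(A\circ\hat{Z})[\mathbf{0}]$, so objectwise evaluation is nothing but the composite of the circle product $A\circ\hat{Z}$ in $\SymSeq$ followed by evaluation at $\mathbf{0}$.

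First I would verify that the functor $\functor{\hat{-}}{\C}{\SymSeq}$ preserves the relevant colimits. Since $\hat{Z}[S]$ equals $Z$ when $|S|=0$ and $\emptyset$ otherwise, this functor is defined objectwise and hence preserves reflexive coequalizers and filtered colimits (colimits in $\SymSeq$ are computed objectwise by Proposition~\ref{prop:tensor_monoidal}, and the empty coproduct $\emptyset$ is preserved trivially). Thus applying $\hat{-}$ to the reflexive coequalizer diagram $\xymatrix@1{Z_{-1} & Z_0\ar[l] & Z_1\ar@<-0.5ex>[l]\ar@<+0.5ex>[l]}$ in $\C$ yields a reflexive coequalizer diagram $\xymatrix@1{\hat{Z}_{-1} & \hat{Z}_0\ar[l] & \hat{Z}_1\ar@<-0.5ex>[l]\ar@<+0.5ex>[l]}$ in $\SymSeq$, and similarly for filtered diagrams.

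For part~(a), I would then apply Proposition~\ref{prop:colimit_properties_circle}(a) to the two reflexive coequalizer diagrams $A_\bullet$ and $\hat{Z}_\bullet$ in $\SymSeq$, concluding that the objectwise circle product $\xymatrix@1{A_{-1}\circ\hat{Z}_{-1} & A_0\circ\hat{Z}_0\ar[l] & A_1\circ\hat{Z}_1\ar@<-0.5ex>[l]\ar@<+0.5ex>[l]}$ is again a reflexive coequalizer in $\SymSeq$. Evaluating this diagram at $\mathbf{0}$ preserves the coequalizer (objectwise evaluation is a colimit-preserving functor $\SymSeq\rarrow\C$), and using \eqref{eq:useful_relation_for_conc_at_zero} to identify $(A\circ\hat{Z})[\mathbf{0}]\Iso A\circ(Z)$ gives exactly the desired reflexive coequalizer diagram in $\C$. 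For part~(b), the same strategy applies using Proposition~\ref{prop:colimit_properties_circle}(b) for filtered colimits, together with the natural isomorphisms obtained by evaluating at $\mathbf{0}$. Part~(c) follows by the identical argument with $(\SymSeq,\circ,I)$ replaced by $(\Seq,\hat{\circ},I)$, using the embedding of $\C$ into $\Seq$.

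The only subtle point—and the step I expect to require the most care—is confirming that evaluation at $\mathbf{0}$ genuinely commutes with the circle product in the way \eqref{eq:useful_relation_for_conc_at_zero} asserts, and that the maps in the resulting diagram in $\C$ coincide with those induced by the original evaluation functor $A_\bullet\circ(Z_\bullet)$. This is essentially a naturality check: the isomorphism $(A\circ\hat{Z})[\mathbf{0}]\Iso A\circ(Z)$ is natural in both $A$ and $Z$ by the earlier propositions, so it intertwines the structure maps of the two diagrams. Once naturality is confirmed, the remainder is a direct transport of the statements of Proposition~\ref{prop:colimit_properties_circle} across this isomorphism, so no new colimit computations are needed.
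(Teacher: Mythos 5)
Your proposal is correct and is exactly the paper's argument: the paper proves this proposition by applying Proposition~\ref{prop:colimit_properties_circle} to the diagrams obtained via the embedding \eqref{eq:embedding_at_zero}, which is precisely your reduction (you have simply filled in the details that the embedding and evaluation at $\mathbf{0}$ preserve the relevant colimits and that \eqref{eq:useful_relation_for_conc_at_zero} is natural).
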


\begin{proof}
This follows from Proposition~\ref{prop:colimit_properties_circle} using the embedding \eqref{eq:embedding_at_zero}.
\end{proof}

\subsection{Free-forgetful adjunctions}

\begin{defn} Let $\capO,R,S$ be operads.
\begin{itemize}
\item $\LtO$ is the category of left $\capO$-modules and their morphisms.
\item $\RtO$ is the category of right $\capO$-modules and their morphisms.
\item $\Bi_{(R,S)}$ is the category of $(R,S)$-bimodules and their morphisms.
\item $\AlgO$ is the category of $\capO$-algebras and their morphisms.
\end{itemize}
\end{defn}

The following free-forgetful adjunctions will be useful.

\begin{prop}\quad\label{prop:free_forgetful_adjunctions}
\begin{itemize}
\item[(a)] Let $\capO,R,S$ be $\Sigma$-operads. There are adjunctions
\begin{align*}
\xymatrix{
  \SymSeq\ar@<0.5ex>[r]^-{\capO\circ-} & \LtO,\ar@<0.5ex>[l]^-{U}
}\quad\quad
\xymatrix{
  \SymSeq\ar@<0.5ex>[r]^-{-\circ\capO} & \RtO,\ar@<0.5ex>[l]^-{U}
}\quad\quad
\xymatrix{
  \SymSeq\ar@<0.5ex>[r]^-{R\circ-\circ S} & 
  \Bi_{(R,S)},\ar@<0.5ex>[l]^-{U}
}
\end{align*}
with left adjoints on top and $U$ the forgetful functor.
\item[(b)] Let $\capO,R,S$ be $\Omega$-operads. There are adjunctions
\begin{align*}
\xymatrix{
  \Seq\ar@<0.5ex>[r]^-{\capO\circhat-} & \LtO,\ar@<0.5ex>[l]^-{U}
}\quad\quad
\xymatrix{
  \Seq\ar@<0.5ex>[r]^-{-\circhat\capO} & \RtO,\ar@<0.5ex>[l]^-{U}
}\quad\quad
\xymatrix{
  \Seq\ar@<0.5ex>[r]^-{R\circhat-\circhat S} & 
  \Bi_{(R,S)},\ar@<0.5ex>[l]^-{U}
}
\end{align*}
with left adjoints on top and $U$ the forgetful functor.
\end{itemize}
\end{prop}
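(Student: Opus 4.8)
The plan is to recognize all six adjunctions as instances of the standard free--forgetful adjunction for modules over a monoid in a monoidal category, each realized as the free-algebra/forgetful adjunction of an associated monad. Throughout I would use that $(\SymSeq,\circ,I)$ and $(\Seq,\hat\circ,I)$ are monoidal with circle product associative and unital up to coherent natural isomorphism (Propositions~\ref{prop:circ_monoidal} and~\ref{prop:circ_assoc}), and that operads are exactly monoid objects in these categories (Definition~\ref{defn:operads}); no cofibrancy or monoid-axiom hypotheses are needed, as the construction is purely formal.

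For part~(a), consider first left modules. Given a $\Sigma$-operad $\capO$, I would form the endofunctor $T:=\capO\circ(-)$ on $\SymSeq$. Using the associativity isomorphism $\capO\circ(\capO\circ A)\Iso(\capO\circ\capO)\circ A$ followed by the monoid multiplication $\function{m}{\capO\circ\capO}{\capO}$ I define a multiplication $\function{\mu}{TT}{T}$, and using $\function{\eta}{I}{\capO}$ together with $I\circ A\Iso A$ a unit $\id\rarrow T$; the monad axioms for $T$ then follow directly from the monoid axioms for $\capO$ and the coherence isomorphisms for $\circ$. By construction an algebra over $T$ is exactly an object with a left $\capO$-action, so the category of $T$-algebras is $\LtO$ and its canonical free--forgetful adjunction, with free functor $\capO\circ(-)$, is the one asserted. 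Concretely, the bijection $\hom_{\LtO}(\capO\circ A,M)\Iso\hom_{\SymSeq}(A,UM)$ sends a module map $g$ to the restriction $A\Iso I\circ A\xrightarrow{\eta\circ\id}\capO\circ A\xrightarrow{g}M$, and sends $f$ to the composite $\capO\circ A\xrightarrow{\id\circ f}\capO\circ M\rarrow M$ along the action; the module axioms make these mutually inverse and natural. The right-module case is identical, using the monad $(-)\circ\capO$ (its multiplication is $\id\circ m$, supplied by functoriality of $\circ$ in the right variable) and free functor $(-)\circ\capO$.

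For the bimodule case I would introduce the monad $T:=R\circ(-)\circ S$, with multiplication assembled from $m_R$ and $m_S$ after regrouping by associativity, $R\circ(R\circ A\circ S)\circ S\Iso(R\circ R)\circ A\circ(S\circ S)$, and unit built from the two operad units. The one genuinely non-formal point --- and the step I expect to be the main obstacle --- is to check that a $T$-algebra structure $\function{\alpha}{R\circ M\circ S}{M}$ is the same datum as a pair of \emph{compatible} left $R$- and right $S$-actions: one recovers the one-sided actions by restricting $\alpha$ along the units $\eta_R$ and $\eta_S$, reconstructs $\alpha$ as the resulting common composite $R\circ M\circ S\rarrow M$, and verifies that the bimodule compatibility relation is precisely what makes this correspondence well defined and identifies the monad-algebra axioms with the bimodule axioms. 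Granting this, $\Bi_{(R,S)}$ is the category of $T$-algebras and the adjunction with free functor $R\circ(-)\circ S$ follows.

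Finally, part~(b) for $\Omega$-operads is obtained by the verbatim arguments with $\SymSeq$ replaced by $\Seq$ and $\circ$ by $\hat\circ$, since every ingredient invoked --- monoidality, associativity and unit coherence, and functoriality of the product in each variable --- holds equally for $(\Seq,\hat\circ,I)$.
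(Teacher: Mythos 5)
The paper states Proposition~\ref{prop:free_forgetful_adjunctions} without proof, treating it as the standard free--forgetful adjunction for modules and bimodules over a monoid object in a monoidal category; your monadic argument is precisely the standard justification and is correct. The one step you flag as delicate --- identifying algebras over $R\circ(-)\circ S$ with compatible pairs of one-sided actions --- is handled the right way (restrict along the two units, reassemble via associativity of the structure map), and everything you invoke (coherence of $\circ$ and $\circhat$, functoriality in each variable) is supplied by Propositions~\ref{prop:circ_assoc} and~\ref{prop:circ_monoidal}, so no gap remains.
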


The following are the corresponding free-forgetful adjunctions for algebras over an operad.
\begin{prop}\label{prop:free_forgetful_algebras}
Let $\capO$ be a $\Sigma$-operad and $\capO'$ be an $\Omega$-operad. There are adjunctions
\begin{align*}
\xymatrix{
  \C\ar@<0.5ex>[r]^-{\capO\circ(-)} & \AlgO,\ar@<0.5ex>[l]^-{U}
}\quad\quad
\xymatrix{
  \C\ar@<0.5ex>[r]^-{\capO'\circhat(-)} & \Alg_{\capO'},\ar@<0.5ex>[l]^-{U}
}
\end{align*}
with left adjoints on top and $U$ the forgetful functor.
\end{prop}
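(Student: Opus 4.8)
The plan is to recognize each of these as the standard free--forgetful (Eilenberg--Moore) adjunction associated to a monad. Recall from Section~\ref{sec:algebras_over_operads} that a $\Sigma$-operad $\capO$ determines a monad $\functor{\capO}{\C}{\C}$, $X\mapsto\capO\circ(X)$, with multiplication $\function{m}{\capO\capO}{\capO}$ and unit $\function{\eta}{\id}{\capO}$, and that an $\capO$-algebra is exactly an algebra over this monad; likewise an $\Omega$-operad $\capO'$ determines a monad $X\mapsto\capO'\circhat(X)$ whose algebras are the $\capO'$-algebras. Since the two cases are formally identical, it suffices to treat the $\Sigma$-operad case and then replace $\circ$ by $\circhat$ throughout.

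First I would define the left adjoint on objects by sending $X\in\C$ to the free $\capO$-algebra $\capO\circ(X)$, equipped with the structure map given by the component $m_X$ of the monad multiplication; that this is a genuine $\capO$-algebra is precisely the associativity axiom of the monad, and on morphisms the functor is simply $\capO\circ(-)$. Next I would exhibit the adjunction isomorphism
\begin{align*}
  \hom_{\AlgO}(\capO\circ(X),Y)\Iso\hom_\C(X,U Y)
\end{align*}
by the usual recipe: an $\capO$-algebra map $\function{f}{\capO\circ(X)}{Y}$ is sent to the composite $f\circ\eta_X$, while a map $\function{g}{X}{UY}$ is sent to $h\circ\bigl(\capO\circ(g)\bigr)$, where $\function{h}{\capO\circ(UY)}{Y}$ is the structure map of the target algebra. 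That these are mutually inverse is the standard verification: the round trip starting from $g$ is the identity by naturality of $\eta$ together with the algebra unit axiom $h\circ\eta_{UY}=\id$, and the round trip starting from $f$ is the identity because $f$ respects the $\capO$-action (so $h\circ\capO(f)=f\circ m_X$) together with the monad right unit law $m_X\circ\capO(\eta_X)=\id$. Naturality in $X$ and $Y$ is then formal.

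The only points requiring any care are that the free construction really lands in $\AlgO$ and that the assignment $g\mapsto h\circ(\capO\circ(g))$ produces an $\capO$-algebra map; both reduce to the monad and algebra axioms and are the routine checks of the Eilenberg--Moore construction (see \cite[VI]{MacLane_categories}), so I expect no genuine obstacle. Alternatively, and more in keeping with the surrounding development, one may derive these adjunctions by restricting the free--forgetful adjunction between $\SymSeq$ and $\LtO$ of Proposition~\ref{prop:free_forgetful_adjunctions} along the fully faithful embedding $\functor{\hat{-}}{\C}{\SymSeq}$: identifying $\AlgO$ with the left $\capO$-modules concentrated at $0$, the calculation \eqref{eq:another_useful_hat_relation} shows $\capO\circ\hat{X}$ is concentrated at $0$ with value $\capO\circ(X)$, so the free-module functor carries objects concentrated at $0$ to objects concentrated at $0$ and restricts to $\capO\circ(-)$, with $U$ restricting correspondingly. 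The $\Omega$-operad case follows identically using part~(b) of Proposition~\ref{prop:free_forgetful_adjunctions} and the sequence version of \eqref{eq:another_useful_hat_relation}.
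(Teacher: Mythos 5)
Your proof is correct and is exactly the argument the paper has in mind: the paper states this proposition without proof, relying on the fact that $\capO$-algebras are by definition algebras over the monad $\capO\circ(-)$ (resp.\ $\capO'\circhat(-)$), so the adjunction is the standard Eilenberg--Moore free--forgetful adjunction from \cite[VI]{MacLane_categories}, which you verify in the usual way. Your alternative derivation by restricting Proposition~\ref{prop:free_forgetful_adjunctions} along the embedding \eqref{eq:embedding_at_zero} is also sound and is consistent with how the paper later treats $\AlgO$ as left $\capO$-modules concentrated at $0$.
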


\subsection{Construction of colimits and limits}

The following proposition is proved in \cite[Proposition 2.3.5]{Rezk}, and is closely related to \cite[Proposition II.7.2]{EKMM}. Since it plays a fundamental role in several of the main arguments in this paper, we have included a proof below.

\begin{prop}\label{prop:reflexive_and_filtered_colimits}
Let $\capO,R,S$ be operads. Reflexive coequalizers and filtered colimits exist in $\LtO$, $\RtO$, $\Bi_{(R,S)}$, and $\AlgO$, and are preserved by the forgetful functors.
\end{prop}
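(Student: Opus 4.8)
The plan is to construct each colimit in the underlying category ($\SymSeq$, $\Seq$, or $\C$) and then equip it with a canonical module (resp.\ algebra) structure that makes it the colimit in the relevant category; this simultaneously establishes existence and preservation by the forgetful functor, since the underlying object of the constructed colimit is by design the underlying colimit. The essential input is that the functors defining the module structures — namely $\capO\circ(-)$, $(-)\circ\capO$, $R\circ(-)\circ S$, and their sequence and evaluation analogues — preserve reflexive coequalizers and filtered colimits, which is exactly the content of Proposition~\ref{prop:colimit_properties_circle} and the proposition immediately following it.

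I will spell out the case of left $\capO$-modules and reflexive coequalizers; the other cases run in parallel. Given a reflexive pair of left $\capO$-modules whose underlying coequalizer
\[
\xymatrix@1{A_{-1} & A_0\ar[l] & A_1\ar@<-0.5ex>[l]\ar@<+0.5ex>[l]}
\]
is formed in $\SymSeq$, first apply $\capO\circ(-)$. By Proposition~\ref{prop:colimit_properties_circle}(a), with the left-hand factor held constant at $\capO$ (a constant, hence reflexive, coequalizer diagram), the diagram
\[
\xymatrix@1{\capO\circ A_{-1} & \capO\circ A_0\ar[l] & \capO\circ A_1\ar@<-0.5ex>[l]\ar@<+0.5ex>[l]}
\]
is again a reflexive coequalizer. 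Since the two structure maps $A_1\rarrow A_0$ are morphisms of left $\capO$-modules, the action maps $\capO\circ A_i\rarrow A_i$ composed with $A_i\rarrow A_{-1}$ agree on the parallel pair, and so induce a unique map $m\colon\capO\circ A_{-1}\rarrow A_{-1}$ out of the coequalizer.

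Next I would verify that $m$ satisfies the associativity and unit axioms. Each is an equality of two maps out of $\capO\circ A_{-1}$ or $\capO\circ\capO\circ A_{-1}$; applying Proposition~\ref{prop:colimit_properties_circle}(a) once more, together with the associativity isomorphism of Proposition~\ref{prop:circ_assoc}, exhibits these objects as reflexive coequalizers, so the identities follow by uniqueness of the induced map from the fact that they already hold over $A_0$ and $A_1$. Finally, to see that $A_{-1}$ with this structure is the coequalizer in $\LtO$, note that any left $\capO$-module $B$ receiving compatible module maps from $A_0$ admits a unique underlying factorization $A_{-1}\rarrow B$; a further appeal to uniqueness out of the coequalizer $\capO\circ A_{-1}$ shows this factorization respects the $\capO$-action. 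The argument for filtered colimits is identical, replacing part~(a) by part~(b).

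The cases of right modules and $(R,S)$-bimodules are handled verbatim using $(-)\circ\capO$ and $R\circ(-)\circ S$ in place of $\capO\circ(-)$, for bimodules applying the colimit-preservation on the left and on the right in turn. For $\capO$-algebras I would instead build the colimit in $\C$ and invoke the evaluation version of Proposition~\ref{prop:colimit_properties_circle} (the special case stated immediately after it) to see that $\capO\circ(-)\colon\C\rarrow\C$ preserves these colimits, equivalently via the embedding~\eqref{eq:embedding_at_zero} realizing algebras as left modules concentrated at $0$. The sequence versions follow from part~(c). The main obstacle is conceptual rather than technical: the entire construction rests on the preservation statements of Proposition~\ref{prop:colimit_properties_circle}, which hold precisely for reflexive coequalizers and filtered colimits and \emph{fail} for general colimits, so no such elementary ``compute downstairs and lift'' argument is available for arbitrary colimits.
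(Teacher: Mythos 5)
Your proposal is correct and follows essentially the same route as the paper: form the coequalizer (resp.\ filtered colimit) in the underlying category, use Proposition~\ref{prop:colimit_properties_circle} to see that applying $\capO\circ(-)$ (and $\capO\circ\capO\circ(-)$) again yields reflexive coequalizer diagrams, induce the action map $m$ by the universal property, and check the module axioms and the colimiting property in $\LtO$ by uniqueness of maps out of these coequalizers. The paper packages the verification into one commutative diagram with induced arrows $m$, $s_0$, $d_0$, $d_1$, but the content is the same as your argument.
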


\begin{proof}
Let $\capO$ be a $\Sigma$-operad and consider the case of left $\capO$-modules. Suppose $\xymatrix@1{A_0 & A_1\ar@<-0.5ex>[l]\ar@<+0.5ex>[l]}$ is a reflexive pair in $\LtO$ and consider the solid commutative diagram
\begin{align*}
\xymatrix{
  \capO\circ\capO\circ A_{-1}\ar@{.>}@<-0.5ex>[d]_{d_0}
  \ar@{.>}@<0.5ex>[d]^{d_1} & 
  \capO\circ\capO\circ A_0\ar[l]\ar@<-0.5ex>[d]_{m\circ\id}
  \ar@<0.5ex>[d]^{\id\circ m} & 
  \capO\circ\capO\circ A_1\ar@<-0.5ex>[l]\ar@<0.5ex>[l]
  \ar@<-0.5ex>[d]_{m\circ\id}\ar@<0.5ex>[d]^{\id\circ m} \\
  \capO\circ A_{-1}\ar@{.>}[d]^{m} & \capO\circ A_0\ar[l]\ar[d]^{m} & 
  \capO\circ A_1\ar@<-0.5ex>[l]\ar@<0.5ex>[l]\ar[d]^{m} \\
  A_{-1}\ar@{.>}@/^1pc/[u]^{s_0} & A_0\ar[l]\ar@/^1pc/[u]^{\eta\circ\id} & 
  A_1\ar@<-0.5ex>[l]\ar@<0.5ex>[l]
  \ar@/^1pc/[u]^{\eta\circ\id}
}
\end{align*}
in $\SymSeq$, with bottom row the reflexive coequalizer diagram of the underlying reflexive pair in $\SymSeq$. By Proposition~\ref{prop:colimit_properties_circle}, the rows are reflexive coequalizer diagrams and hence there exist unique dotted arrows $m,s_0,d_0,d_1$ in $\SymSeq$ which make the diagram commute. By uniqueness, $s_0=\eta\circ\id$, $d_0=m\circ\id$, and $d_1=\id\circ m$. It is easy to verify that $m$ gives $A_{-1}$ the structure of a left $\capO$-module and that the bottom row is a reflexive coequalizer diagram in $\LtO$; it is easy to check the diagram lives in $\LtO$ and that the colimiting cone is initial with respect to all cones in $\LtO$. The case for filtered colimits is similar. The cases for $\RtO$, $\Bi_{(R,S)}$, and $\AlgO$ can be argued similarly. A similar argument verifies the case for $\Omega$-operads.
\end{proof}

The following proposition is proved in \cite[Proposition 2.3.5]{Rezk}. It also follows from the argument in \cite[Proposition II.7.4]{EKMM}. 

\begin{prop}\label{prop:colimits_exist}
Let $\capO,R,S$ be operads. All small colimits exist in $\LtO$, $\RtO$, $\Bi_{(R,S)}$, and $\AlgO$. If $\functor{A}{\D}{\LtO}$ is a small diagram, then $\colim A$ in $\LtO$ may be calculated by a reflexive coequalizer of the form
\begin{align*}
\colim A\Iso\colim\Bigl(
\xymatrix{
\capO\circ\bigl(\colim\limits_{d\in D} A_d\bigr) & 
  \capO\circ\bigl(\colim\limits_{d\in D}(\capO\circ A_d)\bigr)
  \ar@<-1.0ex>[l]\ar@<0.0ex>[l]
}
\Bigr)
\end{align*}
in the underlying category $\SymSeq$; the colimits appearing inside the parenthesis are in the underlying category $\SymSeq$.
\end{prop}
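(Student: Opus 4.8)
The plan is to recognize $\LtO$ as the category of algebras over the monad $T=\capO\circ(-)$ on $\SymSeq$ determined by the free--forgetful adjunction of Proposition~\ref{prop:free_forgetful_adjunctions}, and then to run the standard construction of colimits in a category of algebras over a monad as reflexive coequalizers of free algebras. Since $\SymSeq$ has all small colimits (Proposition~\ref{prop:circ_monoidal}) and reflexive coequalizers exist in $\LtO$ and are preserved by the forgetful functor $U$ (Proposition~\ref{prop:reflexive_and_filtered_colimits}), the displayed reflexive coequalizer is meaningful and exists in $\LtO$; the real content is that it computes $\colim A$.

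First I would fix a small diagram $\functor{A}{\D}{\LtO}$ and form, in the underlying category $\SymSeq$, the objects $\colim_d A_d$ and $\colim_d(\capO\circ A_d)$ together with the canonical comparison map $\function{\kappa}{\colim_d(\capO\circ A_d)}{\capO\circ(\colim_d A_d)}$ assembled from the coprojections. I would then exhibit the reflexive pair of free left $\capO$-modules
\[
\capO\circ\bigl(\colim_d A_d\bigr)\ \leftleftarrows\ \capO\circ\bigl(\colim_d(\capO\circ A_d)\bigr),
\]
whose two arrows are (i) $\capO\circ a$, where $\function{a}{\colim_d(\capO\circ A_d)}{\colim_d A_d}$ is induced by the module structure maps of the $A_d$, and (ii) the composite $\capO\circ\bigl(\colim_d(\capO\circ A_d)\bigr)\xrightarrow{\capO\circ\kappa}\capO\circ\capO\circ\bigl(\colim_d A_d\bigr)\xrightarrow{m\circ\id}\capO\circ\bigl(\colim_d A_d\bigr)$. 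The unit $\function{\eta}{I}{\capO}$ induces a common section, so the pair is reflexive, and both arrows are morphisms of left $\capO$-modules since they are built from free modules and the operad structure maps. By Proposition~\ref{prop:reflexive_and_filtered_colimits} its coequalizer $B$ exists in $\LtO$ and is created in $\SymSeq$.

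The key step is to check that $B$ has the universal property of $\colim A$ in $\LtO$. For any left $\capO$-module $C$, I would write $\hom_{\LtO}(B,C)$ as the equalizer of the two induced maps $\hom_{\LtO}(\capO\circ(\colim_d A_d),C)\rightrightarrows\hom_{\LtO}(\capO\circ(\colim_d(\capO\circ A_d)),C)$. Applying the free--forgetful adjunction and the fact that these colimits are computed in $\SymSeq$ identifies the two terms with $\lim_d\hom_{\SymSeq}(A_d,UC)$ and $\lim_d\hom_{\SymSeq}(\capO\circ A_d,UC)$. Tracing the two parallel arrows through the adjunction, the equalizer condition says precisely that a compatible family of maps $\function{g_d}{A_d}{UC}$ in $\SymSeq$ commutes with the $\capO$-actions, i.e.\ is a compatible family of morphisms of left $\capO$-modules. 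This produces a natural isomorphism $\hom_{\LtO}(B,C)\Iso\lim_d\hom_{\LtO}(A_d,C)$, which is exactly the universal property of $\colim A$. Since the diagram $A$ was arbitrary, all small colimits exist in $\LtO$.

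Finally, the cases of $\RtO$, $\Bi_{(R,S)}$, and $\AlgO$ follow by the identical argument using the corresponding free--forgetful adjunctions of Propositions~\ref{prop:free_forgetful_adjunctions} and~\ref{prop:free_forgetful_algebras} together with Proposition~\ref{prop:reflexive_and_filtered_colimits}, and the case of $\Omega$-operads is the same with $(\SymSeq,\circ,I)$ replaced by $(\Seq,\circhat,I)$. I expect the main obstacle to be the bookkeeping in the universal-property verification: one must identify the two parallel arrows correctly under the adjunction and confirm that the equalizer they cut out is exactly the set of $\capO$-module cocones on $A$, which is where the reflexivity and the monad multiplication $m$ do their work.
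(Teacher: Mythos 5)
Your argument is correct and is essentially the standard one: the paper itself omits the proof, citing \cite[Proposition 2.3.5]{Rezk} and \cite[Proposition II.7.4]{EKMM}, and those references proceed exactly as you do, presenting $\colim A$ as the reflexive coequalizer of the indicated pair of free modules and verifying the universal property through the free--forgetful adjunction. Your identification of the two parallel arrows and of the equalizer condition as the $\capO$-equivariance of a cocone matches that argument.
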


\begin{example}
For instance, if $\capO$ is a $\Sigma$-operad and $A,B\in\LtO$, then the coproduct $A\amalg B$ in $\LtO$ may be calculated by a reflexive coequalizer of the form
\begin{align*}
 A\amalg B\Iso\colim\Bigl(
 \xymatrix{
 \capO\circ(A\amalg B) & 
 \capO\circ\bigl((\capO\circ A)\amalg 
 (\capO\circ B)\bigr)\ar@<-1.0ex>[l]\ar@<0.0ex>[l]
 }\Bigr)
\end{align*}
in the underlying category $\SymSeq$. The coproducts appearing inside the parentheses are in the underlying category $\SymSeq$.
\end{example}

Colimits in right modules over an operad are particularly simple to calculate.

\begin{prop}
Let $\capO$ be an operad. The forgetful functors from right $\capO$-modules in Proposition~\ref{prop:free_forgetful_adjunctions} preserve all small colimits.
\end{prop}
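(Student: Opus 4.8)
The plan is to exploit an asymmetry between right and left modules that is invisible at the level of the definitions but decisive for colimits. Recall from Proposition~\ref{prop:free_forgetful_adjunctions} that the free right $\capO$-module functor is $-\circ\capO$ (with $U$ its right adjoint), so that a right $\capO$-module is precisely an algebra over the monad $T:=-\circ\capO$ on $\SymSeq$ (resp. $-\circhat\capO$ on $\Seq$): the multiplication $T^2\rarrow T$ is induced by the operad structure map $\function{m}{\capO\circ\capO}{\capO}$ and the unit $\id\rarrow T$ by $I\rarrow\capO$ together with $A\iso A\circ I$, and the monad algebra axioms are exactly the right action axioms. The crucial observation is that, in contrast with the left-module functor $\capO\circ-$, this monad $T$ is itself a \emph{left} adjoint: by Proposition~\ref{prop:adjunction_circ} (taking $B=\capO$) the functor $-\circ\capO$ is left adjoint to $\Map^\circ(\capO,-)$, and hence preserves all small colimits. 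The same holds for $T^2\iso-\circ(\capO\circ\capO)$.

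Given this, I would argue as follows. Let $\functor{A}{\D}{\RtO}$ be a small diagram and form $L:=\colim_{d\in\D}U A_d$ in the underlying category $\SymSeq$. Since $-\circ\capO$ preserves this colimit, there is a natural isomorphism $L\circ\capO\iso\colim_{d\in\D}(A_d\circ\capO)$, and the right action maps $\function{m_d}{A_d\circ\capO}{A_d}$, being natural in $d$, assemble into a map $\function{m}{L\circ\capO}{L}$ on the colimit. One then checks that $(L,m)$ is a right $\capO$-module: associativity and the unit condition are equalities between maps out of $L\circ\capO\circ\capO$ and out of $L$ respectively, and these follow from the corresponding identities for each $A_d$ by the uniqueness of maps out of a colimit, using that both $-\circ\capO$ and $-\circ\capO\circ\capO$ preserve the colimit defining $L$.

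It remains to see that $(L,m)$, together with the evident cocone, is the colimit of $A$ in $\RtO$, for then $U$ carries it to $L=\colim U A$ as required. Given any right $\capO$-module $M$ and a cocone $\function{\varphi_d}{A_d}{M}$ in $\RtO$, the underlying cocone induces a unique map $\function{\bar\varphi}{L}{U M}$ in $\SymSeq$; one verifies that $\bar\varphi$ respects the right $\capO$-actions by comparing the two composites $L\circ\capO\rarrow U M$ and again invoking uniqueness out of the colimit $L\circ\capO\iso\colim(A_d\circ\capO)$. This is the same bookkeeping already carried out in the proof of Proposition~\ref{prop:reflexive_and_filtered_colimits}, so I would only indicate it. The argument for $\Omega$-operads and $(\Seq,\circhat,I)$ is identical, with $\circ$ replaced by $\circhat$ and $\Map^\circ$ by $\Map^{\hat{\circ}}$.

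The main obstacle is conceptual rather than computational: one must notice that colimits of right modules behave so simply \emph{because} $-\circ\capO$ is a left adjoint, whereas the left-module free functor $\capO\circ-$ is not (it involves the non-colimit-preserving tensor powers $\tensor_{\Sigma_t}(-)^{\tensor t}$), which is exactly why left-module colimits require the reflexive-coequalizer description of Proposition~\ref{prop:colimits_exist}. Once this asymmetry is isolated and Proposition~\ref{prop:adjunction_circ} is brought to bear, every remaining step is the routine transport of structure along a preserved colimit, and the only care needed is to confirm that the action axioms and universal property hold by the uniqueness clauses attached to the colimits $L$, $L\circ\capO$, and $L\circ\capO\circ\capO$.
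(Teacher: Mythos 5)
Your proposal is correct and rests on exactly the same observation as the paper's (one-line) proof: by Proposition~\ref{prop:adjunction_circ} the functor $-\circ\capO$ is a left adjoint, hence preserves all small colimits, so the forgetful functor creates colimits of right modules. The additional bookkeeping you supply (transporting the action along the preserved colimit and checking the universal property) is the standard elaboration the paper leaves implicit.
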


\begin{proof}
This is because the functor $\functor{-\circ\capO}{\SymSeq}{\SymSeq}$ is a left adjoint. A similar argument verifies the case for $\Omega$-operads.
\end{proof}

Limits in algebras and modules over an operad are also simple to calculate.

\begin{prop}
\label{prop:limits_exist}
Let $\capO,R,S$ be operads. All small limits exist in $\LtO$, $\RtO$, $\Bi_{(R,S)}$, and $\AlgO$, and are preserved by the forgetful functors in Propositions \ref{prop:free_forgetful_adjunctions} and \ref{prop:free_forgetful_algebras}. 
\end{prop}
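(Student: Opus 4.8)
The plan is to recognize that each of the four categories is the category of algebras over a suitable monad on a complete underlying category, and then to invoke the standard fact that the forgetful functor from algebras over a monad \emph{creates} limits. This immediately gives both existence of limits and their preservation by the forgetful functors of Propositions~\ref{prop:free_forgetful_adjunctions} and~\ref{prop:free_forgetful_algebras}.

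First I would identify the relevant monads. As noted in Section~\ref{sec:algebras_over_operads}, an $\capO$-algebra is an algebra over the monad $\functor{\capO\circ(-)}{\C}{\C}$, and a left $\capO$-module is exactly an algebra over the monad $\functor{\capO\circ(-)}{\SymSeq}{\SymSeq}$. Using associativity of circle product (Proposition~\ref{prop:circ_assoc}) together with the operad multiplication $m$ and unit $\eta$, the functors $(-)\circ\capO$ and $R\circ(-)\circ S$ are likewise monads on $\SymSeq$, whose algebras are precisely right $\capO$-modules and $(R,S)$-bimodules. The underlying categories $\SymSeq$, $\Seq$, and $\C$ are complete by Basic Assumption~\ref{BasicAssumption} and Propositions~\ref{prop:tensor_monoidal} and~\ref{prop:circ_monoidal}. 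The $\Omega$-operad cases are identical with $\circ$ replaced by $\circhat$ and $\SymSeq$ by $\Seq$.

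Next I would carry out the routine monadic argument for a monad $T$ on a complete category $\mathcal{M}$. Given a small diagram $\functor{Y}{\D}{\mathcal{M}^T}$ with structure maps $a_d$, form $L:=\lim_d Y_d$ in $\mathcal{M}$ with projections $p_d$. Naturality of the $a_d$ shows the composites $a_d\circ T(p_d)\colon TL\to Y_d$ form a cone, so the universal property of $L$ produces a unique $a\colon TL\to L$ with $p_d a = a_d\circ T(p_d)$. One then checks the unit and associativity axioms for $a$, and that $(L,a)$ is the limit in $\mathcal{M}^T$, by the usual diagram chase: in each case both sides of the desired identity agree after composing with every $p_d$, hence coincide by uniqueness. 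This exhibits $U$ as creating, and in particular preserving, all small limits.

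There is no serious obstacle here; the point worth emphasizing is what makes the argument work and why limits are formally easier than colimits in this setting. The structure map $a$ is built entirely from $T$ applied to the projections together with the universal property of $L$, so the argument \emph{never} requires the monad $T$ to preserve the limit $L$. This stands in contrast with the colimit case, where Proposition~\ref{prop:colimits_exist} genuinely relied on constructing colimits via reflexive coequalizers and on the fact that circle product commutes with reflexive coequalizers and filtered colimits (Proposition~\ref{prop:colimit_properties_circle}).
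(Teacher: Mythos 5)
Your proposal is correct. The paper in fact states Proposition~\ref{prop:limits_exist} without any proof, so there is no argument to compare against; the standard fact you invoke --- that the forgetful functor from the Eilenberg--Moore category of a monad creates all limits, with no hypothesis that the monad preserve them --- is exactly the intended justification, and your identification of $\LtO$, $\RtO$, $\Bi_{(R,S)}$, and $\AlgO$ as categories of algebras over the monads $\capO\circ-$, $-\circ\capO$, $R\circ-\circ S$ on $\SymSeq$ and $\capO\circ(-)$ on $\C$ (via Proposition~\ref{prop:circ_assoc}) is the right way to set it up. Your closing remark correctly isolates why limits are formally easier than the colimits of Proposition~\ref{prop:colimits_exist}.
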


\subsection{Circle products (mapping sequences) over an operad}
In this section we present some basic constructions for modules. The reader may wish to compare the following definition with \cite[Chapter VI.5]{MacLane_homology}.

\begin{defn}
Let $R$ be a $\Sigma$-operad (resp. $\Omega$-operad), $A$ a right $R$-module, and $B$ a left $R$-module. Define $A\circ_R B\in\SymSeq$ (resp. $A\,\hat{\circ}_R\, B\in\Seq$) by the reflexive coequalizer
\begin{align*}
  A\circ_R B &:=\colim
  \Bigl(
  \xymatrix{
  A\circ B & A\circ R\circ B
  \ar@<0.5ex>[l]^-{d_1}\ar@<-0.5ex>[l]_-{d_0}
  }
  \Bigr),\\
  \Bigl(\text{resp.}\quad
  A\,\hat{\circ}_R\, B &:=\colim
  \Bigl(
  \xymatrix{
  A\circhat B & A\circhat R\circhat B
  \ar@<0.5ex>[l]^-{d_1}\ar@<-0.5ex>[l]_-{d_0}
  }
  \Bigr)
  \Bigr),
\end{align*}
with $d_0$ induced by $\function{m}{A\circ R}{A}$ and $d_1$ induced by $\function{m}{R\circ B}{B}$ (resp. $d_0$ induced by $\function{m}{A\circhat R}{A}$ and $d_1$ induced by $\function{m}{R\circhat B}{B}$).
\end{defn}

\begin{defn}
Let $S$ be a $\Sigma$-operad (resp. $\Omega$-operad) and let $B$ and $C$ be right $S$-modules. Define $\Map^\circ_S(B,C)\in\SymSeq$ (resp. $\Map^{\hat{\circ}}_S(B,C)\in\Seq$) by the equalizer
\begin{align*}
  \Map^\circ_S(B,C)&:=\lim
  \Bigl( 
  \xymatrix{
  \Map^\circ(B,C)\ar@<-0.5ex>[r]_-{d^1}\ar@<0.5ex>[r]^-{d^0} & 
  \Map^\circ(B\circ S,C)
  }
  \Bigr),\\
  \Bigl(\text{resp.}\quad
  \Map^{\hat{\circ}}_S(B,C)&:=\lim
  \Bigl( 
  \xymatrix{
  \Map^{\hat{\circ}}(B,C)\ar@<-0.5ex>[r]_-{d^1}\ar@<0.5ex>[r]^-{d^0} & 
  \Map^{\hat{\circ}}(B\circhat S,C)
  }
  \Bigr)
  \Bigr),
\end{align*}
with $d^0$ induced by $\function{m}{B\circ S}{B}$ and $d^1$ induced by $\function{m}{C\circ S}{C}$ (resp. $d^0$ induced by $\function{m}{B\circhat S}{B}$ and $d^1$ induced by $\function{m}{C\circhat S}{C}$).
\end{defn}

\subsection{Adjunctions}
The reader may wish to compare the following adjunctions with \cite[Chapter VI.8]{MacLane_homology}.
\begin{prop}\label{prop:relative_adjunctions}
Let $Q,R,S$ be $\Sigma$-operads. There are isomorphisms,
\begin{align}
  \hom_{\Rt_S}(A\circ B,C)&\Iso\hom(A,\Map^\circ_S(B,C)),
  \label{eq:adj_first}\\
  \hom(A\circ_R B,C)&\Iso\hom_{\Rt_R}(A,\Map^\circ(B,C)),
  \label{eq:adj_second}\\
  \hom_{(Q,S)}(A\circ_R B,C)&\Iso\hom_{(Q,R)}(A,\Map^\circ_S(B,C)),
  \label{eq:adj_third}
  \end{align}
natural in $A,B,C$. 
\end{prop}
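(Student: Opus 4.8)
The plan is to establish the three adjunction isomorphisms by reducing each to adjunctions already proven for the circle product without operad constraints, namely Proposition~\ref{prop:adjunction_circ}, and then checking compatibility with the relevant (co)equalizer defining $\circ_R$ or $\Map^\circ_S$. The guiding principle is that each side of a desired isomorphism is constructed as a limit or colimit of objects for which the unconstrained adjunction already applies, and that $\hom$ turns these (co)limits into the appropriate limits of hom-sets; a morphism in the module/bimodule category is then precisely a morphism of underlying objects that is compatible with the extra structure, which the (co)equalizer encodes.

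First I would treat \eqref{eq:adj_first}. A map $A\circ B\rarrow C$ in $\Rt_S$ is a map in $\SymSeq$ compatible with the right $S$-action, i.e. a map $f\colon A\circ B\rarrow C$ equalizing the two composites through $(A\circ B)\circ S\Iso A\circ(B\circ S)$ (using associativity, Proposition~\ref{prop:circ_assoc}). Under the unconstrained adjunction \eqref{eq:adjunction_circ}, such $f$ corresponds to a map $\bar f\colon A\rarrow\Map^\circ(B,C)$, and the right $S$-module compatibility condition on $f$ translates, via naturality of \eqref{eq:adjunction_circ}, into the condition that $\bar f$ factors through the equalizer defining $\Map^\circ_S(B,C)$; the two maps $d^0,d^1$ built from $\function{m}{B\circ S}{B}$ and $\function{m}{C\circ S}{C}$ are exactly the images of the two $S$-action comparisons. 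So I would verify that the bijection of \eqref{eq:adjunction_circ} restricts to a bijection between $S$-equivariant maps and maps landing in the equalizer, which is a diagram chase using naturality.

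Next, for \eqref{eq:adj_second} I would dualize the reasoning using the defining reflexive coequalizer $A\circ_R B=\colim(A\circ B\,\rightleftarrows\, A\circ R\circ B)$. A map $A\circ_R B\rarrow C$ is, by the universal property of the coequalizer, a map $A\circ B\rarrow C$ coequalizing $d_0,d_1$; applying \eqref{eq:adjunction_circ} gives a map $A\rarrow\Map^\circ(B,C)$, and the coequalizing condition becomes precisely the condition that this map respects the right $R$-module structures, i.e. defines a morphism in $\Rt_R$ into $\Map^\circ(B,C)$ (which carries a natural right $R$-module structure induced from the left $R$-action on $B$). Here I would also invoke associativity and the relations $A\circ R\circ B\Iso(A\circ R)\circ B\Iso A\circ(R\circ B)$ to identify the two face maps correctly. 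Finally, \eqref{eq:adj_third} should follow by combining \eqref{eq:adj_first} and \eqref{eq:adj_second}: starting from a $(Q,S)$-bimodule map out of $A\circ_R B$, the left $Q$- and right $S$-equivariance decouple, letting me apply \eqref{eq:adj_second} on the $R$-coequalizer side and \eqref{eq:adj_first} on the $S$-equalizer side, with the residual left $Q$-actions matching up on both ends by naturality.

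The main obstacle I expect is bookkeeping the interaction between the coequalizer (for $\circ_R$) and the equalizer (for $\Map^\circ_S$) in the combined statement \eqref{eq:adj_third}, and in particular checking that the left $Q$-module and right $S$-module structures commute through these constructions without interfering; this requires knowing that $\circ_R$ and $\Map^\circ_S$ preserve the residual actions and that $\hom$ in the bimodule category is computed as the appropriate joint limit. The key technical input making all of this go through is that $\circ$ preserves reflexive coequalizers and filtered colimits in each variable (Proposition~\ref{prop:colimit_properties_circle}), together with associativity (Proposition~\ref{prop:circ_assoc}) and the closed structure (Proposition~\ref{prop:adjunction_circ}); granting these, each verification reduces to a naturality diagram chase rather than any genuinely new construction, so the proof is conceptual but notationally heavy.
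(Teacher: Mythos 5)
The paper states Proposition~\ref{prop:relative_adjunctions} without proof, offering only a pointer to the analogous classical adjunctions for $\tensor_R$ and $\Map$ in \cite[Chapter VI.8]{MacLane_homology}, so there is no in-text argument to compare against. Your proposal is correct and supplies exactly the intended argument: transport each condition across the closed-structure adjunction of Proposition~\ref{prop:adjunction_circ} using associativity (Proposition~\ref{prop:circ_assoc}), identifying $S$-equivariance of a map out of $A\circ B$ with factorization through the equalizer defining $\Map^\circ_S(B,C)$, and the coequalizing condition for maps out of $A\circ_R B$ with right $R$-linearity into $\Map^\circ(B,C)$, then combining the two for \eqref{eq:adj_third}.
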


\begin{rem}
In \eqref{eq:adj_first}, $A$ is a symmetric sequence, and both $B$ and $C$ have right $S$-module structures. In \eqref{eq:adj_second}, $A$ has a right $R$-module structure, $B$ has a left $R$-module structure, and $C$ is a symmetric sequence. In \eqref{eq:adj_third}, $A$ has a $(Q,R)$-bimodule structure, $B$ has a $(R,S)$-bimodule structure, and $C$ has a $(Q,S)$-bimodule structure. There is a corresponding statement for $\Omega$-operads.
\end{rem}

\subsection{Change of operads}

\begin{prop}\label{prop:change_of_operads}
Let $\function{f}{R}{S}$ be a morphism of $\Sigma$-operads (resp. $\Omega$-operads). There are adjunctions
\begin{align*}
\xymatrix{
  \Lt_R\ar@<0.5ex>[r]^-{f_*} & \Lt_S,\ar@<0.5ex>[l]^-{f^*}
}\quad\quad
\xymatrix{
  \Alg_R\ar@<0.5ex>[r]^-{f_*} & \Alg_S,\ar@<0.5ex>[l]^-{f^*}
} 
\end{align*}
with left adjoints on top and $f^*$ the forgetful functor. Here, the left-hand adjunction satisfies $f_*:=S\circ_R-$ (resp. $f_*:=S\,\hat{\circ}_R\,-$).
\end{prop}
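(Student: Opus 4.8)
The plan is to realize $f_*$ and $f^*$ as extension and restriction of scalars along the monoid morphism $f$, just as for modules over rings, and to derive the adjunction from the coequalizer definition of the relative circle product together with the free-forgetful adjunction of Proposition~\ref{prop:free_forgetful_adjunctions}. I treat the $\Sigma$-operad case; the argument transports verbatim to $\Omega$-operads by replacing $(\SymSeq,\circ,I)$ with $(\Seq,\circhat,I)$.

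First I would record the relevant bimodule structure. Via $f$ and the multiplication $\function{m}{S\circ S}{S}$, the operad $S$ becomes an $(S,R)$-bimodule: the left $S$-action is $m$, and the right $R$-action is the composite $S\circ R\xrightarrow{\id\circ f}S\circ S\xrightarrow{m}S$. Associativity of $m$ and the fact that $f$ is a morphism of operads guarantee the two actions commute. Consequently, for any left $R$-module $B$ the relative circle product $S\circ_R B$ is defined, and the left $S$-action on the $S$-factor descends through the defining reflexive coequalizer (using Propositions~\ref{prop:colimit_properties_circle} and~\ref{prop:reflexive_and_filtered_colimits}), making $f_*B:=S\circ_R B$ a left $S$-module, functorially in $B$. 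The functor $f^*$ is restriction of scalars: a left $S$-module $C$ is regarded as a left $R$-module via $R\circ C\xrightarrow{f\circ\id}S\circ C\xrightarrow{m}C$.

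Next I would establish the adjunction. Present $S\circ_R B$ as the reflexive coequalizer $\colim\bigl(S\circ(R\circ B)\rightrightarrows S\circ B\bigr)$ in $\LtO[S]$, in which both objects are \emph{free} left $S$-modules and the two parallel maps (induced by the right $R$-action on $S$ and the left $R$-action on $B$) are maps of left $S$-modules. Applying $\hom_{\LtO[S]}(-,C)$ turns this coequalizer into an equalizer, and the free-forgetful isomorphism $\hom_{\LtO[S]}(S\circ Y,C)\iso\hom_{\SymSeq}(Y,C)$ of Proposition~\ref{prop:free_forgetful_adjunctions} identifies it with the equalizer of
\[
\xymatrix{
\hom_{\SymSeq}(B,C)\ar@<0.5ex>[r]\ar@<-0.5ex>[r] &
\hom_{\SymSeq}(R\circ B,C)
}
\]
where one map is induced by the left $R$-action $R\circ B\rarrow B$ and the other by the restricted $S$-action on $C$. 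By inspection this equalizer is exactly the set of maps $B\rarrow C$ commuting with the $R$-actions, i.e.\ $\hom_{\LtO[R]}(B,f^*C)$. Naturality in $B$ and $C$ is immediate, giving the left-module adjunction.

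Finally, for the algebra case I would use that an $\capO$-algebra is the same as a left $\capO$-module concentrated at $0$, and check that $f_*$ preserves concentration at $0$: by \eqref{eq:another_useful_hat_relation} both $S\circ\hat{X}$ and $S\circ R\circ\hat{X}$ are concentrated at $0$ (with values $S\circ(X)$ and $S\circ(R\circ(X))$), and since colimits in $\SymSeq$ are computed objectwise the coequalizer $S\circ_R\hat{X}$ is again concentrated at $0$. Hence $f_*$ restricts to a functor $\Alg_R\rarrow\Alg_S$, while $f^*$ obviously preserves concentration at $0$, and the left-module adjunction restricts to the stated adjunction on algebras. The step I expect to be the main obstacle is verifying cleanly that the left $S$-module structure descends to the coequalizer and, relatedly, that the two parallel maps in the presentation of $S\circ_R B$ are genuinely maps of free left $S$-modules; this is precisely what licenses commuting $\hom_{\LtO[S]}$ past the colimit, and everything else is bookkeeping with the universal properties already in place.
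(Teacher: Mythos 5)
Your proof is correct, and it fills in an argument the paper itself omits: Proposition~\ref{prop:change_of_operads} is stated without proof, being the operadic analogue of the usual extension/restriction-of-scalars adjunction. Your route --- presenting $S\circ_R B$ as a reflexive coequalizer of free left $S$-modules (legitimate since both parallel maps are left $S$-module maps and, by Proposition~\ref{prop:reflexive_and_filtered_colimits}, the coequalizer in $\Lt_S$ is computed in $\SymSeq$), applying $\hom_{\Lt_S}(-,C)$, and identifying the resulting equalizer with $\hom_{\Lt_R}(B,f^*C)$ via the free-forgetful adjunction, then restricting to objects concentrated at $0$ for the algebra case --- is exactly the standard argument the paper implicitly relies on, and each step checks out against the definitions and propositions already in place.
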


\section{Homotopical analysis of circle products and tensor products}

The purpose of this section is to prove Theorems \ref{thm:well_behaved_tensor} and \ref{thm:circ_compatible}, which verify that the projective model structure on (symmetric) sequences meshes nicely with tensor products and circle products. So far in this paper, except in Section~\ref{sec:introduction}, we have only used the property that $(\C,\tensor,\unit)$ is a closed symmetric monoidal category with all small limits and colimits. In this section, we begin to make use of the model category assumptions on $(\C,\tensor,\unit)$ described in Basic Assumption~\ref{BasicAssumption}.

It is easy to check that the diagram categories $\SymSeq$ and $\Seq$ inherit corresponding projective model category structures, where the weak equivalences (resp. fibrations) are the objectwise weak equivalences (resp. objectwise fibrations). Each of these model structures is cofibrantly generated in which the generating cofibrations and acyclic cofibrations have small domains. The following theorem, which is proved in Section~\ref{sec:pushout_corner_proof}, verifies that these model structures give the closed symmetric monoidal categories $(\SymSeq,\tensor,1)$ and $(\Seq,\hat{\tensor},1)$ the structure of monoidal model categories.

\begin{thm}\quad\label{thm:well_behaved_tensor}
\begin{itemize}
\item[(a)] In symmetric sequences (resp. sequences), if $\function{i}{K}{L}$ and $\function{j}{A}{B}$ are cofibrations, then the pushout corner map
\begin{align*}
  &\xymatrix{
  L\tensor A\coprod_{K\tensor A} K\tensor B\ar[r] & L\tensor B
  },\\
  \bigl(\text{resp.}\quad
  &\xymatrix{
  L\hat{\tensor} A\coprod_{K\hat{\tensor} A} K\hat{\tensor} B\ar[r] & 
  L\hat{\tensor} B
  }\bigr),
\end{align*}
is a cofibration that is an acyclic cofibration if either $i$ or $j$ is a weak equivalence.
\item[(b)] In symmetric sequences (resp. sequences), if $\function{j}{A}{B}$ is a cofibration and $\function{p}{X}{Y}$ is a fibration, then the pullback corner map
\begin{align*}
  &\xymatrix{
  \Map^\tensor(B,X)\ar[r] & \Map^\tensor(A,X)\times_{\Map^\tensor(A,Y)}
  \Map^\tensor(B,Y)
  },\\
  \bigl(\text{resp.}\quad
  &\xymatrix{
  \Map^{\hat{\tensor}}(B,X)\ar[r] & \Map^{\hat{\tensor}}(A,X)
  \times_{\Map^{\hat{\tensor}}(A,Y)}
  \Map^{\hat{\tensor}}(B,Y)
  }\bigr),
\end{align*}
is a fibration that is an acyclic fibration if either $j$ or $p$ is a weak equivalence.
\end{itemize}
\end{thm}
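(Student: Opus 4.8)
The plan is to prove part~(a) and deduce part~(b) formally. Parts~(a) and~(b) are the two adjoint reformulations of a single statement: by the two-variable adjunction encoded in \eqref{eq:adjunction_tensor_object} of Proposition~\ref{prop:adjunction_tensor_object}, for maps $i,j,p$ there is a natural bijection between liftings of $i$ against the pullback-corner map of $j$ and $p$ and liftings of the pushout-corner map of $i$ and $j$ against $p$; tracking (acyclic) cofibrations and fibrations through this bijection shows that statement~(a) for all cofibrations $i,j$ is equivalent to statement~(b) for all cofibrations $j$ and fibrations $p$. Hence it suffices to prove~(a). For~(a) I would then use the standard reduction to generators. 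For a fixed map $j$, the operation $i\mapsto i\,\square\,j$ is built from $\tensor$---which preserves colimits in each variable---together with a single pushout, so it preserves pushouts, transfinite composites, and retracts; the same holds in the variable $j$. Since cofibrations are closed under these constructions and every cofibration is a retract of a transfinite composite of pushouts of generating cofibrations, it suffices to verify the conclusion of~(a) when both $i$ and $j$ are generating cofibrations, and---for the acyclic assertions---when one of the two is a generating acyclic cofibration.

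The core is then a direct computation with generators. A generating cofibration of $\SymSeq$ is concentrated at a single level $m$ and has the form $F_m(i_0)$, where $i_0\colon K_0\rarrow L_0$ is a generating cofibration of $\C$ and $F_m\colon\C\rarrow\SymSeq$ is the free functor placing $C\cdot\Sigma_m$ (free right $\Sigma_m$-action) at level $m$ and $\emptyset$ elsewhere; this $F_m$ is left adjoint to $A\mapsto A[\mathbf{m}]$. Using the tensor calculation \eqref{eq:tensors1} I would establish the monoidal comparison $F_m(C)\tensor F_n(D)\Iso F_{m+n}(C\tensor D)$. The only summand surviving in \eqref{eq:tensors1} is $s_1=m,\ s_2=n$, which yields $(C\cdot\Sigma_m)\tensor(D\cdot\Sigma_n)\cdot_{\Sigma_m\times\Sigma_n}\Sigma_{m+n}\Iso (C\tensor D)\cdot(\Sigma_m\times\Sigma_n)\cdot_{\Sigma_m\times\Sigma_n}\Sigma_{m+n}$, and the induction adjunction of Proposition~\ref{prop:diagram_adjunctions} identifies the induction of a free $(\Sigma_m\times\Sigma_n)$-object as the corresponding free $\Sigma_{m+n}$-object, namely $(C\tensor D)\cdot\Sigma_{m+n}=F_{m+n}(C\tensor D)[\mathbf{m+n}]$.

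Granting this comparison, the pushout-corner square of $F_m(i_0)$ and $F_n(j_0)$ is obtained by applying $F_{m+n}$ to the pushout-corner square of $i_0$ and $j_0$ in $\C$; since $F_{m+n}$ is a left adjoint it preserves the defining pushout, so the corner map of the generators equals $F_{m+n}(i_0\,\square\,j_0)$. Two inputs now finish the argument. First, because $(\C,\tensor,\unit)$ is a monoidal model category (Basic Assumption~\ref{BasicAssumption}), $i_0\,\square\,j_0$ is a cofibration in $\C$, acyclic if either $i_0$ or $j_0$ is. Second, $F_{m+n}$ is left Quillen, being left adjoint to $A\mapsto A[\mathbf{m+n}]$, which by definition of the projective model structure preserves fibrations and acyclic fibrations; hence $F_{m+n}$ sends cofibrations to cofibrations and acyclic cofibrations to acyclic cofibrations. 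This proves~(a) for symmetric sequences. The sequence case is entirely parallel but simpler: by \eqref{eq:tensors3} the product $\tensorhat$ carries no symmetric-group contribution, so the analogous free functors satisfy $F_m(C)\tensorhat F_n(D)\Iso F_{m+n}(C\tensor D)$ with no induction to track, and the corner map of generators is simply $i_0\,\square\,j_0$ sitting at level $m+n$, immediately a (acyclic) cofibration.

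The main obstacle is the identification in the second paragraph: correctly bookkeeping the $\Sigma_m\times\Sigma_n$-actions and their induction to $\Sigma_{m+n}$, and checking that $F_{m+n}$ commutes with the pushout-corner construction assembled from $\tensor$ and a single pushout. Once the comparison $F_m(C)\tensor F_n(D)\Iso F_{m+n}(C\tensor D)$ is in hand, everything else is the formal transport of the monoidal model structure on $\C$ along the left Quillen functors $F_{m+n}$, together with the routine reduction to generators.
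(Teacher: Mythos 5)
Your argument is correct, but it takes a genuinely different route from the one in the paper. The paper skips the reduction to generators entirely: it proves part (b) directly for arbitrary cofibrations $j$ and fibrations $p$ by using the computation of Proposition~\ref{prop:calc_map_tensor_object}, which exhibits $\Map^\tensor(B,C)[\mathbf{t}]$ as a product over $s$ of $\Sigma_s$-fixed points of mapping objects in $\C$; the pullback corner map is then a product of fixed-point pullback corner maps, and these are handled by Proposition~\ref{prop:corner_map_fixed}, itself proved by adjointing over to copowers $C\cdot(H\backslash G)$ and invoking the pushout-product axiom in $\C$. Your proof instead establishes part (a) on generating (acyclic) cofibrations, which in the projective structure are the free maps $F_m(i_0)$ with $F_m$ left adjoint to evaluation at $\mathbf{m}$, and rests on the isomorphism $F_m(C)\tensor F_n(D)\cong F_{m+n}(C\tensor D)$ --- which is correct, and your bookkeeping of the induction from $\Sigma_m\times\Sigma_n$ up to $\Sigma_{m+n}$ via \eqref{eq:tensors1} is the right way to see it --- together with the observation that $F_{m+n}$ is left Quillen because its right adjoint, evaluation at $\mathbf{m+n}$, preserves (acyclic) fibrations by definition of the projective structure. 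What each approach buys: yours is the standard Schwede--Shipley/Hovey-style verification and concentrates all the content in one clean isomorphism of free functors; the paper's fixed-point machinery is heavier for this theorem alone but is reused essentially verbatim in the proof of Theorem~\ref{thm:circ_compatible}, where a reduction to generators in the second variable is unavailable because $\circ$ does not commute with colimits in that variable. Both proofs are complete for the statement at hand.
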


These model structures also mesh nicely with circle product, provided an additional cofibrancy condition is satisfied.  The following theorem, which is proved in Section~\ref{sec:pushout_corner_proof}, is motivated by a similar argument given in \cite{Rezk} for symmetric sequences of simplicial sets using a model structure with fewer weak equivalences.

\begin{thm}\label{thm:circ_compatible}
Let $A$ be a cofibrant symmetric sequence (resp. cofibrant sequence). 
\begin{itemize}
\item[(a)] In symmetric sequences (resp. sequences), if $\function{i}{K}{L}$ and $\function{j}{A}{B}$ are cofibrations, then the pushout corner map 
\begin{align*}
  &\xymatrix{
    L\circ A\coprod_{K\circ A} K\circ B\ar[r] & L\circ B},\\
  \bigl(\text{resp.}\quad
  &\xymatrix{
    L\circhat A\coprod_{K\circhat A} K\circhat B\ar[r] & L\circhat B
  }\bigr),
\end{align*}
is a cofibration that is an acyclic cofibration if either $i$ or $j$ is a weak equivalence.
\item[(b)] In symmetric sequences (resp. sequences), if $\function{j}{A}{B}$ is a cofibration and $\function{p}{X}{Y}$ is a fibration, then the pullback corner map
\begin{align*}
  &\xymatrix{
  \Map^\circ(B,X)\ar[r] & \Map^\circ(A,X)\times_{\Map^\circ(A,Y)}
  \Map^\circ(B,Y)
  },\\
  \bigl(\text{resp.}\quad
  &\xymatrix{
  \Map^{\hat{\circ}}(B,X)\ar[r] & \Map^{\hat{\circ}}(A,X)
  \times_{\Map^{\hat{\circ}}(A,Y)}
  \Map^{\hat{\circ}}(B,Y)
  }\bigr),
\end{align*}
is a fibration that is an acyclic fibration if either $j$ or $p$ is a weak equivalence.
\end{itemize}
\end{thm}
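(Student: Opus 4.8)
The plan is to deduce part (b) from part (a) by an adjunction argument, and then to prove (a) by reducing the outer variable of the circle product to a generating cofibration, at which point the symmetric group contribution collapses and the statement becomes a direct consequence of Theorem~\ref{thm:well_behaved_tensor}. First I would reduce (b) to (a). By Proposition~\ref{prop:adjunction_circ}, for each symmetric sequence $W$ the functor $\function{-\circ W}{\SymSeq}{\SymSeq}$ is left adjoint to $\Map^\circ(W,-)$; applying this with $W=A$ and $W=B$, the standard correspondence between lifting problems shows that the pushout corner map of $(i,j)$ has the left lifting property against a map $p$ precisely when $i$ has the left lifting property against the pullback corner map of $(j,p)$. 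Hence (a) and (b) are equivalent, and it suffices to prove (a). The same remark applies to sequences, using the companion adjunction for $\hat{\circ}$.

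For (a) I would first dispose of the outer cofibration $i$. For each fixed $Y$ the functor $\function{-\circ Y}{\SymSeq}{\SymSeq}$ is a left adjoint by Proposition~\ref{prop:adjunction_circ}, so it preserves all colimits; consequently the assignment sending $i$ to the pushout corner map of $(i,j)$ preserves pushouts, transfinite compositions, and retracts in the variable $i$. Since the cofibrations (resp. acyclic cofibrations) of the projective model structure on $\SymSeq$ are generated, under these operations, by the maps $G_r(C\rarrow D)$ obtained by applying the free functors $\function{G_r}{\C}{\SymSeq}$ (left adjoint to evaluation at $\mathbf r$) to the generating (acyclic) cofibrations $C\rarrow D$ of $\C$, it is enough to verify (a) when $i=G_r(C\rarrow D)$, keeping $\function{j}{A}{B}$ an arbitrary cofibration with $A$, hence $B$, cofibrant.

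The crucial point is that for such a generator the symmetric group quotient disappears. Using the calculation \eqref{eq:circ_calc} together with the fact that $G_r(W)$ is concentrated at $\mathbf r$, where it is the free $\Sigma_r$-object $W\cdot\Sigma_r$, the coinvariants $\tensor_{\Sigma_r}$ collapse to an ordinary tensor product, giving natural isomorphisms $G_r(C)\circ Y\Iso\hat{C}\tensor Y^{\tensor r}$ and $G_r(D)\circ Y\Iso\hat{D}\tensor Y^{\tensor r}$ in $\SymSeq$, where $\hat{(-)}=G_0$ is the embedding \eqref{eq:embedding_at_zero}. Under these identifications the pushout corner map of $(i,j)$ becomes, up to natural isomorphism,
\[
  \hat{D}\tensor A^{\tensor r}\coprod_{\hat{C}\tensor A^{\tensor r}}\hat{C}\tensor B^{\tensor r}\rarrow \hat{D}\tensor B^{\tensor r},
\]
which is exactly the pushout corner map of the tensor product $\tensor$ with respect to the two maps $\hat{C}\rarrow\hat{D}$ and $A^{\tensor r}\rarrow B^{\tensor r}$. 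The first of these is a cofibration (acyclic if $C\rarrow D$ is) as the image of $C\rarrow D$ under the left Quillen functor $G_0$; the second factors as the composite of the maps $B^{\tensor k}\tensor j\tensor A^{\tensor(r-1-k)}$ for $0\le k\le r-1$, each of which is the pushout corner map of $j$ against $\emptyset\rarrow B^{\tensor k}\tensor A^{\tensor(r-1-k)}$ and hence a cofibration by Theorem~\ref{thm:well_behaved_tensor}(a) (using that $A$ and $B$ are cofibrant), acyclic when $j$ is. Feeding these two cofibrations into Theorem~\ref{thm:well_behaved_tensor}(a) shows the displayed corner map is a cofibration, and an acyclic cofibration as soon as $C\rarrow D$ or $j$ is a weak equivalence; this handles both the case $i$ acyclic (take $C\rarrow D$ a generating acyclic cofibration) and the case $j$ acyclic. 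The argument for sequences is identical and simpler: the category $\Omega$ carries no nontrivial symmetric group actions, so $G_r(C)\circhat Y\Iso\hat{C}\tensor Y^{\hat{\tensor} r}$ already, with no quotient, and Theorem~\ref{thm:well_behaved_tensor}(a) for $\hat{\tensor}$ finishes.

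The main obstacle is the collapse isolating $G_r(C)\circ Y\Iso\hat{C}\tensor Y^{\tensor r}$: recognizing that reducing the outer variable to a \emph{free} generator forces the $\Sigma_r$-coinvariants in \eqref{eq:circ_calc} to become an honest tensor power is exactly what lets the whole statement rest on Theorem~\ref{thm:well_behaved_tensor} alone, with no freeness or cofibrancy hypothesis on the symmetric group actions. This is the conceptual feature distinguishing Theorem~\ref{thm:circ_compatible}, valid under Basic Assumption~\ref{BasicAssumption} together with cofibrancy of $A$, from the harder Theorems~\ref{MainTheorem} and~\ref{MainTheorem2}, where the inner variable need not be free and the monoid axiom or a strong cofibrancy condition is required.
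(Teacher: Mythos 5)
Your proof is correct, but it takes a genuinely different route from the paper's. The paper proves part (b) directly and objectwise: using Proposition~\ref{prop:calc_map_circ} it reduces the pullback corner map for $\Map^\circ$ at level $\mathbf{t}$ to fixed-point corner maps $\Map(B[\mathbf{s},\mathbf{t}],X[\mathbf{s}])^{\Sigma_s}\rarrow\cdots$, then uses Proposition~\ref{prop:map_fixed_pts} to decompose $B[\mathbf{s},\mathbf{t}]$ over partitions, Proposition~\ref{prop:corner_map_fixed} to handle the fixed points, and Proposition~\ref{prop:tensors_are_cofibrations} to see that $A[\mathbf{s_1}]\tensor\dotsb\tensor A[\mathbf{s_t}]\rarrow B[\mathbf{s_1}]\tensor\dotsb\tensor B[\mathbf{s_t}]$ is an (acyclic) cofibration of $(\Sigma_{s_1}\times\dotsb\times\Sigma_{s_t})$-objects --- this last step being where cofibrancy of $A$ enters. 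You instead prove part (a) by a cellular reduction of the outer variable $i$ to the free generators $G_r(C\rarrow D)$ of the projective structure on $\SymSeq$ (legitimate, since $-\circ A$ and $-\circ B$ are left adjoints by Proposition~\ref{prop:adjunction_circ}, so the lifting-property transposition saturates correctly in $i$), and then exploit the collapse $G_r(C)\circ Y\Iso\hat{C}\tensor Y^{\tensor r}$ of the $\Sigma_r$-coinvariants on a free object, after which only Theorem~\ref{thm:well_behaved_tensor} is needed; cofibrancy of $A$ enters through the factorization of $j^{\tensor r}$ into the maps $B^{\tensor k}\tensor j\tensor A^{\tensor(r-1-k)}$. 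What your approach buys is that the equivariant machinery of Propositions~\ref{prop:map_fixed_pts} and \ref{prop:tensors_are_cofibrations} is bypassed entirely for the circle product (it survives only inside the proof of Theorem~\ref{thm:well_behaved_tensor}), and it makes transparent why no equivariance or cofibrancy hypothesis is needed on $K$ and $L$. What the paper's approach buys is an explicit objectwise description of the pullback corner maps that is reused verbatim for Theorem~\ref{thm:well_behaved_tensor} and avoids any appeal to cofibrant generation of $\SymSeq$ at this stage. One cosmetic slip: in the sequence case the identification should read $G_r(C)\circhat Y\Iso\hat{C}\tensorhat Y^{\tensorhat r}$ (the monoidal product of $\Seq$), not $\tensor$; this does not affect the argument.
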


Consider Theorem~\ref{thm:circ_compatible} and assume that $A=\emptyset$. It is useful to note that $L\circ\emptyset$ and $K\circ\emptyset$ may not be isomorphic, and similarly $\Map^\circ(\emptyset,X)$ and $\Map^\circ(\emptyset,Y)$ may not be isomorphic. On the other hand, Theorem~\ref{thm:circ_compatible} reduces the proof of the following proposition to a trivial inspection at the emptyset $\mathbf{0}$. 

\begin{prop}
Let $B$ be a cofibrant symmetric sequence (resp. cofibrant sequence).
\begin{itemize}
\item[(a)] In symmetric sequences (resp. sequences), if $\function{i}{K}{L}$ is a cofibration, then the induced map
\begin{align*}
  & K\circ B\rarrow L\circ B\ ,\quad
  \bigl(\text{resp.}\quad
  K\circhat B\rarrow L\circhat B\
  \bigr),
\end{align*}
is a cofibration that is an acyclic cofibration if $i$ is a weak equivalence.
\item [(b)] In symmetric sequences (resp. sequences), if $\function{p}{X}{Y}$ is a fibration, then the induced map 
\begin{align*}
  &\Map^\circ(B,X)\rarrow\Map^\circ(B,Y)\ , \quad
  \bigl(\text{resp.}\quad \Map^{\hat{\circ}}(B,X)\rarrow\Map^{\hat{\circ}}(B,Y)\
  \bigr),
\end{align*}
is a fibration that is an acyclic fibration if p is a weak equivalence.
\end{itemize}
\end{prop}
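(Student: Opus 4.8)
The plan is to deduce both statements from Theorem~\ref{thm:circ_compatible} by taking $A=\emptyset$, the initial object of $\SymSeq$ (resp. $\Seq$). This is legitimate: $\emptyset$ is cofibrant, and since $B$ is cofibrant the map $\emptyset\rarrow B$ is a cofibration, so it may serve as the cofibration ``$j$'' in the theorem. The one point requiring care is that, as noted in the paragraph preceding the statement, $L\circ\emptyset$ and $K\circ\emptyset$ need not be isomorphic; hence the pushout corner map of Theorem~\ref{thm:circ_compatible}(a) is not literally the map $K\circ B\rarrow L\circ B$, and one must separately account for the contribution at $\mathbf{0}$.

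For part (a), I would factor the induced map as
\begin{align*}
  K\circ B\rarrow P\rarrow L\circ B,\qquad
  P:=(L\circ\emptyset)\coprod_{K\circ\emptyset}(K\circ B).
\end{align*}
The second map is the pushout corner map of Theorem~\ref{thm:circ_compatible}(a) with $A=\emptyset$, hence a cofibration that is acyclic when $i$ is a weak equivalence. The first map is the pushout of $K\circ\emptyset\rarrow L\circ\emptyset$ along $K\circ\emptyset\rarrow K\circ B$, so it suffices to check that $K\circ\emptyset\rarrow L\circ\emptyset$ is a cofibration. By Proposition~\ref{prop:useful_calculations_for_circ} this map is concentrated at $\mathbf{0}$, where it is $i[\mathbf{0}]$; that is, it is the embedding \eqref{eq:embedding_at_zero} applied to $i[\mathbf{0}]$, giving $\widehat{K[\mathbf{0}]}\rarrow\widehat{L[\mathbf{0}]}$. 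Evaluation at $\mathbf{0}$ preserves cofibrations and weak equivalences (the former is immediate by inspecting the generating cofibrations $G_n$, which land at $\mathbf{0}$ in $i$ when $n=0$ and in $\id_\emptyset$ when $n\geq 1$), so $i[\mathbf{0}]$ is a cofibration in $\C$; and $\hat{-}$, being left adjoint to evaluation at $\mathbf{0}$, is left Quillen. Hence $K\circ\emptyset\rarrow L\circ\emptyset$ is a cofibration in $\SymSeq$, acyclic when $i$ is a weak equivalence. Closure of (acyclic) cofibrations under pushout and composition then gives the claim.

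For part (b), dually, I would factor $\Map^\circ(B,X)\rarrow\Map^\circ(B,Y)$ as
\begin{align*}
  \Map^\circ(B,X)\rarrow
  \Map^\circ(\emptyset,X)\times_{\Map^\circ(\emptyset,Y)}\Map^\circ(B,Y)
  \rarrow\Map^\circ(B,Y).
\end{align*}
The first map is the pullback corner map of Theorem~\ref{thm:circ_compatible}(b) with $A=\emptyset$, hence a fibration, acyclic when $p$ is a weak equivalence. The second map is the pullback of $\Map^\circ(\emptyset,X)\rarrow\Map^\circ(\emptyset,Y)$, and by Proposition~\ref{prop:map_seq_calc} this latter map is $p[\mathbf{0}]$ at $\mathbf{0}$ and $*\rarrow *$ at every other level; it is therefore an objectwise fibration, i.e.\ a fibration in the projective structure, acyclic when $p$ is a weak equivalence. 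Closure of (acyclic) fibrations under pullback and composition gives the claim. The argument for sequences is identical, using the $\circhat$ and $\Map^{\hat{\circ}}$ analogues of Propositions~\ref{prop:useful_calculations_for_circ} and~\ref{prop:map_seq_calc}. The only genuine content here — the ``trivial inspection at $\mathbf{0}$'' — is the identification of these $A=\emptyset$ constructions carried out in those two propositions; everything else is the formal behavior of cofibrations (resp.\ fibrations) under pushout/pullback and composition, so there is no real obstacle beyond remembering to split off the level-$\mathbf{0}$ contribution that the theorem does not already supply.
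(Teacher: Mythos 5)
Your proof is correct and takes essentially the same route as the paper: both apply Theorem~\ref{thm:circ_compatible} with $A=\emptyset$ (legitimate since $B$ cofibrant makes $\emptyset\rarrow B$ a cofibration) and then dispose of the leftover level-$\mathbf{0}$ contribution by a direct inspection using Propositions~\ref{prop:useful_calculations_for_circ} and~\ref{prop:map_seq_calc}. The only cosmetic differences are that the paper proves (b) levelwise --- observing that the pullback corner map coincides with $\Map^\circ(B,X)[\mathbf{t}]\rarrow\Map^\circ(B,Y)[\mathbf{t}]$ for $t\geq 1$ and equals $X[\mathbf{0}]\rarrow Y[\mathbf{0}]$ at $t=0$ --- and then gets (a) by adjunction, whereas you package the same content as explicit two-step factorizations for each part.
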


\begin{proof}
Statements (a) and (b) are equivalent, hence it is sufficient to verify (b).
Suppose $B$ is cofibrant and $\function{p}{X}{Y}$ is an acyclic fibration. We want to verify each induced map
\begin{align*}
  \Map^\circ(B,X)[\mathbf{t}]\rarrow\Map^\circ(B,Y)[\mathbf{t}]
\end{align*}
is an acyclic fibration in $\C$. Theorem~\ref{thm:circ_compatible}(b) implies this for $t\geq 1$. For $t=0$, it is enough to note that $X[\mathbf{0}]\rarrow Y[\mathbf{0}]$ is an acyclic fibration. The other case is similar.
\end{proof}

\subsection{Fixed points, pullback corner maps, and tensor products}

If $G$ is a finite group, it is easy to check that the diagram category $\C^{G^\op}$ inherits a corresponding projective model category structure, where the weak equivalences (resp. fibrations) are the objectwise weak equivalences (resp. objectwise fibrations).

\begin{prop}\label{prop:corner_map_fixed}
Let $G$ be a finite group and $H\subset G$ a subgroup. In $\C^{G^\op}$, suppose $\function{j}{A}{B}$ is a cofibration and $\function{p}{X}{Y}$ is a fibration. Then in $\C$ the pullback corner map
\begin{align*}
  \xymatrix{
  \Map(B,X)^H\ar[r] & \Map(A,X)^H\times_{\Map(A,Y)^H}\Map(B,Y)^H
  }
\end{align*}
is a fibration that is an acyclic fibration if either $j$ or $p$ is a weak equivalence.
\end{prop}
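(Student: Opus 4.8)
The plan is to reduce the statement about $G$-equivariant objects and $H$-fixed points to the already-available machinery on the underlying category $\C$ via the adjunctions of Proposition~\ref{prop:diagram_adjunctions}, and then invoke the monoidal model structure on $\C$ from Basic Assumption~\ref{BasicAssumption}. Since fibrations and acyclic fibrations are each characterized by a right lifting property, it suffices to translate the lifting problem against the pullback corner map into an equivalent lifting problem in $\C$ where the answer is known.

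First I would recall the standard equivalent formulation: the pullback corner map is a fibration (resp.\ acyclic fibration) in $\C$ if and only if it has the right lifting property against all generating acyclic cofibrations (resp.\ generating cofibrations) of $\C$. So fix such a map $\function{u}{C}{D}$ in $\C$; I want to show $u$ lifts against the corner map
\begin{align*}
  \Map(B,X)^H\rarrow\Map(A,X)^H\times_{\Map(A,Y)^H}\Map(B,Y)^H.
\end{align*}
Using the last isomorphism of Proposition~\ref{prop:diagram_adjunctions} (the $(-\tensor A_2,\Map(A_2,-)^{G_2})$ adjunction, specialized to the subgroup $H$ acting so that $\Map(-,-)^H$ becomes a right adjoint to $-\cdot_H\,(-)$ or directly via the $\hom_\C(A,B^H)\Iso\hom_{\C^{H^\op}}(\dots)$ correspondences there), I would transpose the lifting problem for $u$ against the fixed-point corner map into a lifting problem in $\C^{G^\op}$ (or $\C^{H^\op}$) for the image of $u$ under the adjoint against the original corner map
\begin{align*}
  \Map(B,X)\rarrow\Map(A,X)\times_{\Map(A,Y)}\Map(B,Y)
\end{align*}
built from $j$ and $p$ in the diagram category. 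Concretely, a lift against the $H$-fixed corner map corresponds, under adjunction, to a lift of $C\cdot_H G$ (a cofibrant object obtained by applying a left adjoint to the generating (acyclic) cofibration $u$) against the diagram-level corner map.

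The key step is then to observe that the projective model structure on $\C^{G^\op}$ is itself a monoidal model category in which the corner-map axiom holds, so the diagram-level corner map built from the cofibration $j$ and fibration $p$ is a fibration (resp.\ acyclic fibration) in $\C^{G^\op}$; this is exactly Proposition~\ref{prop:group_shaped} together with the fact that $\C$ satisfies the pushout–product axiom. Thus the transposed lifting problem has a solution, and transposing back yields the desired lift for $u$, proving the corner map is a fibration (resp.\ acyclic fibration) in $\C$. The main obstacle I anticipate is bookkeeping: one must check that applying $(-)^H$ commutes correctly with the pullback forming the corner (so that the fixed-point corner map is genuinely the image under the right adjoint of the diagram-level corner map, not merely related to it), and that the left adjoint $-\cdot_H\,(-)$ carries generating (acyclic) cofibrations of $\C$ to (acyclic) cofibrations of $\C^{G^\op}$ — the latter is precisely where the cofibrant-generation hypothesis on $\C$ and the projective structure on $\C^{G^\op}$ are used. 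Everything else is a routine adjunction chase, and a parallel argument handles the acyclic-fibration case by replacing generating acyclic cofibrations with generating cofibrations throughout.
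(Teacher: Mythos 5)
There is a genuine gap at your ``key step.'' After one adjunction transposition, your lifting problem asks that the map $\function{u\cdot(H\backslash G)}{C\cdot(H\backslash G)}{D\cdot(H\backslash G)}$ lift against the diagram-level corner map in $\C^{G^\op}$. For your argument to close, you then need either (i) that $u\cdot(H\backslash G)$ is a projective (acyclic) cofibration in $\C^{G^\op}$, or equivalently (ii) that $(-)^H$ is right Quillen for the projective structure. Both fail when $H$ is nontrivial: the left adjoint of $(-)^H$ is $-\cdot(H\backslash G)$, and $H\backslash G$ is not a free $G$-set, so $u\cdot(H\backslash G)$ is a coproduct of cofibrations in the underlying category $\C$ but not a projective cofibration of $G$-objects. (Your parenthetical that $C\cdot_H G$ is ``a cofibrant object obtained by applying a left adjoint'' conflates the induction functor $-\cdot_H G$ from $\C^{H^\op}$, which is left Quillen, with the left adjoint of $H$-fixed points, which is not.) For the same reason, knowing that the diagram-level corner map is an objectwise (acyclic) fibration does not let you conclude anything about its $H$-fixed points: fixed points of a finite group do not preserve acyclic fibrations in general. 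A side issue is that you invoke Proposition~\ref{prop:group_shaped} for the claim that the projective structure on $\C^{G^\op}$ is a monoidal model category, but that proposition is purely about the monoidal structure and says nothing about model structures.

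The paper avoids this by transposing a \emph{second} time: the lifting problem of $u\cdot(H\backslash G)$ against the corner map of $(j,p)$ is rewritten as the lifting problem of $j$ against the pullback corner map built from $u\cdot(H\backslash G)$ and $p$, namely $\Map(D\cdot(H\backslash G),X)\rarrow\Map(C\cdot(H\backslash G),X)\times_{\Map(C\cdot(H\backslash G),Y)}\Map(D\cdot(H\backslash G),Y)$. In that formulation the only map required to be a projective cofibration is $j$ itself (which it is, by hypothesis), while $u\cdot(H\backslash G)$ need only be a cofibration in the underlying category $\C$ (which it is, being a coproduct of cofibrations); the pullback-corner axiom for the monoidal model category $\C$ then shows the displayed map is an objectwise acyclic fibration, hence an acyclic fibration in the projective structure on $\C^{G^\op}$, and $j$ lifts against it. Your proof becomes correct if you insert this second transposition in place of the appeal to a monoidal model structure on $\C^{G^\op}$.
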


\begin{proof}
Suppose $\function{j}{A}{B}$ is a cofibration and $\function{p}{X}{Y}$ is an acyclic fibration. Let $\function{i}{C}{D}$ be a cofibration in $\C$. We want to verify the pullback corner map satisfies the right lifting property with respect to $i$.
\begin{align}\label{diag:lift_one}
  \xymatrix{
  C\ar[r]\ar[d] & \Map(B,X)^H\ar[d] \\
  D\ar[r]\ar@{.>}[ur] & \Map(A,X)^H\times_{\Map(A,Y)^H}\Map(B,Y)^H.
  }
\end{align}
The solid commutative diagram \eqref{diag:lift_one} in $\C$ has a lift if and only if the solid diagram \eqref{diag:lift_two} in $\C^{G^\op}$has a lift,
\begin{align}\label{diag:lift_two}
  \xymatrix{
  C\cdot(H\backslash G)\ar[r]\ar[d] & \Map(B,X)\ar[d] \\
  D\cdot(H\backslash G)\ar[r]\ar@{.>}[ur] & 
  \Map(A,X)\times_{\Map(A,Y)}\Map(B,Y).
  }
\end{align}
if and only if the solid diagram \eqref{diag:lift_three} in $\C^{G^\op}$ has a lift.
\begin{align}\label{diag:lift_three}
  \xymatrix{
  A\ar[r]\ar[d] & \Map(D\cdot(H\backslash G),X)\ar[d]\\
  B\ar[r]\ar@{.>}[ur] & \Map(C\cdot(H\backslash G),X)
  \times_{\Map(C\cdot(H\backslash G),Y)}
  \Map(D\cdot(H\backslash G),Y).
  }
\end{align}
Hence it is sufficient to verify that the right-hand vertical map in \eqref{diag:lift_three} is an acyclic fibration in $\C$, and hence in $\C^{G^\op}$. The map $\function{i\cdot\id}{C\cdot(H\backslash G)}{D\cdot(H\backslash G)}$ is isomorphic in $\C$ to a coproduct of cofibrations in $\C$, hence is itself a cofibration in $\C$, and the (Enr) axiom \cite[Definition 2.3]{Lewis_Mandell} for monoidal model categories finishes the argument for this case. The other cases are similar.
\end{proof}

\begin{prop}\label{prop:map_fixed_pts}
Let $B$ and $X$ be symmetric sequences and $t\geq 1$. Then for each $s\geq 0$ there is a natural isomorphism in $\C$,
\begin{align*}
  \Map(B[\mathbf{s},\mathbf{t}],X[\mathbf{s}])^{\Sigma_s}\Iso\prod
  \limits_{s_1+\dotsb+s_t=s}
  \Map(B[\mathbf{s_1}]\tensor\dotsb\tensor B[\mathbf{s_t}],X[\mathbf{s}])^
  {\Sigma_{s_1}\times\dotsb\times\Sigma_{s_t}}.
\end{align*}
\end{prop}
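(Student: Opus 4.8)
The plan is to reduce the claim to the induction--restriction adjunction of Proposition~\ref{prop:diagram_adjunctions}, applied to the $\Sigma_s$-equivariant decomposition of the tensor power $B[\mathbf{s},\mathbf{t}]=(B^{\tensor t})[\mathbf{s}]$ supplied by Proposition~\ref{prop:tensor_calculation}. Throughout, I regard $B[\mathbf{s},\mathbf{t}]$ and $X[\mathbf{s}]$ as objects of $\C^{\Sigma_s^\op}$ via the symmetric sequence structure, so that $\Map(B[\mathbf{s},\mathbf{t}],X[\mathbf{s}])$ carries the diagonal (conjugation) $\Sigma_s$-action whose fixed points appear in the statement.

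First I would invoke \eqref{eq:tensors1} with $A_1=\dotsb=A_t=B$ to obtain, naturally in $\C^{\Sigma_s^\op}$, the isomorphism
\[
  B[\mathbf{s},\mathbf{t}]\Iso
  \coprod_{s_1+\dotsb+s_t=s}\bigl(B[\mathbf{s_1}]\tensor\dotsb\tensor B[\mathbf{s_t}]\bigr)
  \underset{\Sigma_{s_1}\times\dotsb\times\Sigma_{s_t}}{\cdot}\Sigma_s .
\]
Here the indexing set is the collection of ordered tuples $(s_1,\dotsc,s_t)$ with $\sum_i s_i=s$ (this is where $t\geq 1$ is used, placing us in the regime of \eqref{eq:tensors1}), each summand is a $\Sigma_s$-object through the right multiplication action on the copower $-\cdot\Sigma_s$, and crucially each summand is individually $\Sigma_s$-stable. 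Conceptually, these tuples index the $\Sigma_s$-orbits of the set maps $\mathbf{s}\to\mathbf{t}$ appearing in \eqref{eq:tensors0}, with $\Sigma_{s_1}\times\dotsb\times\Sigma_{s_t}$ the stabilizer of the standard representative; this is what dictates the shape of the final formula.

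Next, since $\Map(-,X[\mathbf{s}])$ sends coproducts in its first variable to products (being a right adjoint, by the closed structure \eqref{eq:adjunction_C}) and does so $\Sigma_s$-equivariantly, and since $\Sigma_s$-fixed points commute with products (being an end, hence a limit), I obtain
\[
  \Map(B[\mathbf{s},\mathbf{t}],X[\mathbf{s}])^{\Sigma_s}
  \Iso
  \prod_{s_1+\dotsb+s_t=s}
  \Map\Bigl(\bigl(B[\mathbf{s_1}]\tensor\dotsb\tensor B[\mathbf{s_t}]\bigr)
  \underset{\Sigma_{s_1}\times\dotsb\times\Sigma_{s_t}}{\cdot}\Sigma_s,\ X[\mathbf{s}]\Bigr)^{\Sigma_s}.
\]
It then remains to identify each factor on the right with $\Map(B[\mathbf{s_1}]\tensor\dotsb\tensor B[\mathbf{s_t}],X[\mathbf{s}])^{\Sigma_{s_1}\times\dotsb\times\Sigma_{s_t}}$, which is exactly the object-level version of the first adjunction $\hom_{\C^{G^\op}}(A\cdot_H G,B)\Iso\hom_{\C^{H^\op}}(A,B)$ of Proposition~\ref{prop:diagram_adjunctions}, taken with $G=\Sigma_s$ and $H=\Sigma_{s_1}\times\dotsb\times\Sigma_{s_t}$.

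The main obstacle is precisely this last identification: upgrading the $\hom$-set induction--restriction adjunction to the enriched statement about $G$-fixed points of internal mapping objects, and keeping the $\Sigma_s$-equivariance honest through the decomposition. I would handle it by testing against an arbitrary object $W\in\C$ carrying the trivial action, using the defining universal property of the fixed-point end together with \eqref{eq:adjunction_C} to rewrite both $\hom_\C(W,\Map(A\cdot_H G,B)^G)$ and $\hom_\C(W,\Map(A,B)^H)$ as $H$- and $G$-equivariant hom-sets, matching them via the $\hom$-set adjunction; the Yoneda lemma then upgrades this to the asserted isomorphism of objects. Naturality in $s$ and in $B,X$ is automatic, since every isomorphism employed is itself natural.
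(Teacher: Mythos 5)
Your argument is correct and follows the same route as the paper, whose entire proof is the one-line observation that the statement follows from the decomposition \eqref{eq:tensors1} of $B[\mathbf{s},\mathbf{t}]$ in Proposition~\ref{prop:tensor_calculation}; you have simply made explicit the remaining routine steps (coproduct-to-product, commuting fixed points with products, and the enriched induction--restriction isomorphism $\Map(A\cdot_H G,B)^G\iso\Map(A,B)^H$ via Yoneda). No gaps.
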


\begin{proof}
This follows from the calculation in Proposition~\ref{prop:tensor_calculation}.
\end{proof}

\begin{prop}\label{prop:tensors_are_cofibrations}
Let $G_1,\dotsc,G_n$ be finite groups. 
\begin{itemize}
\item[(a)] Suppose for $k=1,\dotsc,n$ that $\function{j_k}{A_k}{B_k}$ is a cofibration between cofibrant objects in $\C^{G_k^\op}$. Then the induced map 
\begin{align*}
  \function{j_1\tensor\dotsb\tensor j_n}
  {A_1\tensor\dotsb\tensor A_n}{B_1\tensor\dotsb\tensor B_n}
\end{align*}
is a cofibration in $\C^{(G_1\times\dotsb\times G_n)^\op}$ that is an acyclic cofibration if each $j_k$ is a weak equivalence.
\item[(b)] Suppose for $k=1,\dotsc,n$ that $A_k$ is a cofibrant object in $\C^{G_k^\op}$. Then $A_1\tensor\dotsb\tensor A_n$ is a cofibrant object in $\C^{(G_1\times\dotsb\times G_n)^\op}$.
\end{itemize}
\end{prop}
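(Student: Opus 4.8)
The plan is to first establish the two-variable (pushout-product) version of the statement for the external tensor, and then to deduce part~(a) from it by factoring a tensor product of maps into pushout corner maps of pairs involving the initial object; part~(b) will then be the special case of part~(a) in which every $A_k=\emptyset$. I would also reduce to two factors at the outset: writing $A_1\tensor\dotsb\tensor A_n\Iso(A_1\tensor\dotsb\tensor A_{n-1})\tensor A_n$ and using the identification $\C^{((G_1\times\dotsb\times G_{n-1})\times G_n)^\op}\Iso\C^{(G_1\times\dotsb\times G_n)^\op}$, an induction on $n$ carrying (a) and (b) along simultaneously reduces everything to the two-factor case (the inductive step feeds the cofibrancy of $A_1\tensor\dotsb\tensor A_{n-1}$ from (b) and the cofibration $j_1\tensor\dotsb\tensor j_{n-1}$ from (a) into the two-factor statement).

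The heart of the argument is the claim that the external tensor $\functor{\tensor}{\C^{G_1^\op}\times\C^{G_2^\op}}{\C^{(G_1\times G_2)^\op}}$ is a left Quillen bifunctor: for cofibrations $\function{j_1}{A_1}{B_1}$ and $\function{j_2}{A_2}{B_2}$, the pushout corner map
\[
  A_1\tensor B_2\coprod_{A_1\tensor A_2}B_1\tensor A_2\rarrow B_1\tensor B_2
\]
is a cofibration in $\C^{(G_1\times G_2)^\op}$ that is acyclic if either $j_1$ or $j_2$ is a weak equivalence. To prove this I would test against a fibration (resp. acyclic fibration) $\function{p}{X}{Y}$ and invoke the standard adjointness between pushout corner maps and pullback corner maps, using the two-variable adjunction furnished by the last isomorphism of Proposition~\ref{prop:diagram_adjunctions}, whose right adjoint is $\Map(A_2,-)^{G_2}$. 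Under this adjunction the pushout corner map has the left lifting property against $p$ if and only if $j_1$ has the left lifting property against the pullback corner map
\[
  \Map(B_2,X)^{G_2}\rarrow\Map(A_2,X)^{G_2}\times_{\Map(A_2,Y)^{G_2}}\Map(B_2,Y)^{G_2}
\]
in $\C^{G_1^\op}$. Since fibrations and acyclic fibrations in $\C^{G_1^\op}$ are detected objectwise in $\C$, I may forget the $G_1$-action; the displayed map then becomes exactly the pullback corner map of Proposition~\ref{prop:corner_map_fixed} with $H=G=G_2$, so it is a fibration in $\C$ that is acyclic whenever $j_2$ or $p$ is a weak equivalence. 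Lifting $j_1$ (a cofibration, acyclic precisely when $j_1$ is a weak equivalence) against this map then yields all of the required cases.

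Finally I would deduce part~(a) for two factors from this bifunctor statement. The map $j_1\tensor j_2$ factors as
\[
  A_1\tensor A_2\xrightarrow{\ \id\tensor j_2\ }A_1\tensor B_2\xrightarrow{\ j_1\tensor\id\ }B_1\tensor B_2,
\]
and, because $A\tensor\emptyset\Iso\emptyset$, the first map is precisely the pushout corner map of the pair $(\emptyset\rarrow A_1,\,j_2)$ and the second that of the pair $(j_1,\,\emptyset\rarrow B_2)$. Here cofibrancy of $A_1$ and of $B_2$ is exactly what makes $\emptyset\rarrow A_1$ and $\emptyset\rarrow B_2$ cofibrations, so the bifunctor statement shows each factor is a cofibration, hence so is the composite; when both $j_1$ and $j_2$ are weak equivalences each factor is additionally acyclic, giving the acyclic case. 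Part~(b) is then the instance of part~(a) applied to the cofibrations $\emptyset\rarrow A_k$ (each a cofibration since $A_k$ is cofibrant): the resulting tensor product is the map $\emptyset\rarrow A_1\tensor\dotsb\tensor A_n$, which exhibits the target as cofibrant. The main obstacle is the Quillen bifunctor claim, namely correctly matching the external-tensor adjunction of Proposition~\ref{prop:diagram_adjunctions} with the fixed-point pullback corner maps of Proposition~\ref{prop:corner_map_fixed}; once that bookkeeping is in place, the factorization and the reduction to the initial object are formal.
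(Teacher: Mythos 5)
Your proof is correct and follows essentially the same route as the paper: reduce to $n=2$ by induction, factor $j_1\tensor j_2$ into $j_1\tensor\id$ and $\id\tensor j_2$, and verify the lifting property against (acyclic) fibrations via the adjunction with $\Map(A_2,-)^{G_2}$ together with Proposition~\ref{prop:corner_map_fixed}. The only organizational difference is that you first establish the full pushout corner map statement for the external tensor --- which the paper proves separately as the proposition immediately following this one, by exactly the lifting argument you describe --- and then recover each factor of $j_1\tensor j_2$ as the special case where one leg is $\emptyset\rarrow A_1$ or $\emptyset\rarrow B_2$, whereas the paper handles those two special cases directly and proves the general pushout corner map statement afterwards.
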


\begin{rem}
By the right-hand adjunction in \eqref{eq:useful_adjunctions_diagrams}, the functor $-\tensor A_k$ preserves initial objects. In particular, if $A_1,\dotsc,A_n$ in the statement of (a) are all initial objects, then $A_1\tensor\dotsb\tensor A_n$ is an initial object in $\C^{(G_1\times\dotsb\times G_n)^\op}$.
\end{rem}

\begin{proof}
For each $n$, statement (b) is a special case of statement (a), hence it is sufficient to verify (a). By induction on $n$, it is enough to verify the case $n=2$. 
Suppose for $k=1,2$ that $\function{j_k}{A_k}{B_k}$ is a cofibration between cofibrant objects in $\C^{G_k^\op}$. The induced map $\function{j_1\tensor j_2}{A_1\tensor A_2}{B_1\tensor B_2}$ factors as
\begin{align*}
  \xymatrix{
  A_1\tensor A_2\ar[r]^{j_1\tensor\id} & 
  B_1\tensor A_2\ar[r]^{\id\tensor j_2} & B_1\tensor B_2,
  }
\end{align*}
hence it is sufficient to verify each of these is a cofibration in $\C^{(G_1\times G_2)^\op}$. Consider any acyclic fibration $\function{p}{X}{Y}$ in $\C^{(G_1\times G_2)^\op}$. We want to show that $j_1\tensor\id$ has the left lifting property with respect to p.
\begin{align*}
\xymatrix{
  A_1\tensor A_2\ar[r]\ar[d] & X\ar[d]\\
  B_1\tensor A_2\ar[r]\ar@{.>}[ur] & Y
}\quad\quad
\xymatrix{ 
  A_1\ar[r]\ar[d] & \Map(A_2,X)^{G_2}\ar[d]\\ 
  B_1\ar[r]\ar@{.>}[ur] & \Map(A_2,Y)^{G_2}
}
\end{align*}
The left-hand solid commutative diagram in $\C^{(G_1\times G_2)^\op}$ has a lift if and only if the right-hand solid diagram in $\C^{G_1^\op}$ has a lift. By assumption $A_2$ is cofibrant in $\C^{G_2^\op}$, hence by Proposition~\ref{prop:corner_map_fixed} the right-hand solid diagram has a lift, finishing the argument that $j_1\tensor\id$ is a cofibration in $\C^{(G_1\times G_2)^\op}$. Similarly, $\id\tensor j_2$ is a cofibration in $\C^{(G_1\times G_2)^\op}$. The case for acyclic cofibrations is similar.
\end{proof}

The following proposition is also useful.

\begin{prop}
Let $G_1$ and $G_2$ be finite groups. Suppose for $k=1,2$ that $\function{j_k}{A_k}{B_k}$ is a cofibration in $\C^{G_k^\op}$. Then the pushout corner map
\begin{align}
\label{eq:naturally_occurring_pushout_corner}
\xymatrix{
  B_1\tensor A_2\amalg_{A_1\tensor A_2} A_1\tensor B_2\ar[r] & 
  B_1\tensor B_2
}
\end{align}
is a cofibration in $\C^{(G_1\times G_2)^\op}$ that is an acyclic cofibration if either $j_1$ or $j_2$ is a weak equivalence.
\end{prop}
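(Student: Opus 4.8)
The plan is to prove this by the standard equivalence between the pushout--product property and the pullback--corner property, transposing lifting problems along the two-variable adjunction furnished by the right-hand adjunction in \eqref{eq:useful_adjunctions_diagrams} and then invoking Proposition~\ref{prop:corner_map_fixed}. Recall that cofibrations are detected by the left lifting property against all acyclic fibrations, and acyclic cofibrations by the left lifting property against all fibrations. Hence it suffices, for each map $\function{p}{X}{Y}$ in $\C^{(G_1\times G_2)^\op}$ of the relevant type, to solve every lifting problem of the pushout corner map \eqref{eq:naturally_occurring_pushout_corner} against $p$.

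First I would fix such a $p$ and transpose. Using the adjunctions $-\tensor A_2\dashv\Map(A_2,-)^{G_2}$ and $-\tensor B_2\dashv\Map(B_2,-)^{G_2}$ from \eqref{eq:useful_adjunctions_diagrams}, a lifting problem of \eqref{eq:naturally_occurring_pushout_corner} against $p$ corresponds to a lifting problem in $\C^{G_1^\op}$ of $j_1$ against the pullback corner map
\begin{align*}
  \Map(B_2,X)^{G_2}\rarrow\Map(A_2,X)^{G_2}
  \times_{\Map(A_2,Y)^{G_2}}\Map(B_2,Y)^{G_2}.
\end{align*}
Checking that the pushout square defining the domain of \eqref{eq:naturally_occurring_pushout_corner} transposes to the pullback square defining the codomain of the displayed map is the one genuinely fiddly point: it is a diagram chase verifying that applying $\Map(-,X)^{G_2}$ and $\Map(-,Y)^{G_2}$ to the cofibration $j_2$ converts the defining pushout into the defining pullback, compatibly with $p$.

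Next I would apply Proposition~\ref{prop:corner_map_fixed} with $G=G_2$ and $H=G_2$, taking the cofibration $j_2$ and the fibration $p$. This identifies the displayed pullback corner map as a fibration that is acyclic whenever $j_2$ or $p$ is a weak equivalence. Although Proposition~\ref{prop:corner_map_fixed} is phrased for a single group, the residual $G_1$-action on $X$ and $Y$ is carried along naturally throughout the construction, and since fibrations and acyclic fibrations in $\C^{G_1^\op}$ are detected objectwise in $\C$, the displayed map is correspondingly a fibration (acyclic when $j_2$ or $p$ is a weak equivalence) in $\C^{G_1^\op}$.

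Finally I would run the case analysis. To see that \eqref{eq:naturally_occurring_pushout_corner} is a cofibration, take $p$ an acyclic fibration: the displayed pullback corner map is then an acyclic fibration, so the cofibration $j_1$ lifts against it and the transposed lift exists. To see it is an acyclic cofibration when $j_1$ is a weak equivalence, take $p$ an arbitrary fibration: the pullback corner map is a fibration and the acyclic cofibration $j_1$ lifts against it. When instead $j_2$ is a weak equivalence, again take $p$ an arbitrary fibration: the pullback corner map is now an acyclic fibration and the cofibration $j_1$ lifts against it. In every case the transposed lift solves the original problem. The main obstacle is purely the bookkeeping in the transposition step; once the corner maps are correctly matched, everything follows immediately from Proposition~\ref{prop:corner_map_fixed}.
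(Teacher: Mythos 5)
Your proof is correct and follows essentially the same route as the paper: transpose the lifting problem against an (acyclic) fibration $p$ through the adjunction $-\tensor A_2\dashv\Map(A_2,-)^{G_2}$ of \eqref{eq:useful_adjunctions_diagrams} to a lifting problem of $j_1$ against the pullback corner map in $\C^{G_1^\op}$, and then invoke Proposition~\ref{prop:corner_map_fixed}. Your explicit remarks on carrying the residual $G_1$-action and on the three lifting cases are exactly the details the paper compresses into ``the other cases are similar.''
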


\begin{proof}
Suppose for $k=1,2$ that $\function{j_k}{A_k}{B_k}$ is a cofibration in $\C^{G_k^\op}$. Consider any acyclic fibration $\function{p}{X}{Y}$ in $\C^{(G_1\times G_2)^\op}$. We want to show that the pushout corner map \eqref{eq:naturally_occurring_pushout_corner} has the left lifting property with respect to $p$. The solid commutative diagram
\begin{align*}
\xymatrix{
  B_1\tensor A_2\amalg_{A_1\tensor A_2} A_1\tensor B_2\ar[d]\ar[r] & X\ar[d]\\
  B_1\tensor B_2\ar[r]\ar@{.>}[ur] & Y
}\quad\quad
\end{align*}
in $\C^{(G_1\times G_2)^\op}$ has a lift if and only if the solid diagram
\begin{align*}
\xymatrix{ 
  A_1\ar[r]\ar[d] & \Map(B_2,X)^{G_2}\ar[d]\\ 
  B_1\ar[r]\ar@{.>}[ur] & \Map(A_2,X)^{G_2}
  \times_{\Map(A_2,Y)^{G_2}}\Map(B_2,Y)^{G_2}
}
\end{align*}
in $\C^{G_1^\op}$ has a lift. By assumption $A_2\rarrow B_2$ is a cofibration in $\C^{G_2^\op}$, hence Proposition~\ref{prop:corner_map_fixed} finishes the argument that \eqref{eq:naturally_occurring_pushout_corner} is a cofibration in $\C^{(G_1\times G_2)^\op}$. The other cases are similar.
\end{proof}

\subsection{Proofs for the pushout corner map theorems}\label{sec:pushout_corner_proof}

\begin{proof}[Proof of Theorem~\ref{thm:well_behaved_tensor}]
Statements (a) and (b) are equivalent, hence it is sufficient to verify statement (b). Suppose $\function{j}{A}{B}$ is an acyclic cofibration and $\function{p}{X}{Y}$ is a fibration. We want to verify each pullback corner map
\begin{align*}
  \xymatrix{
  \Map^\tensor(B,X)[\mathbf{t}]\ar[r] & \Map^\tensor(A,X)[\mathbf{t}]
  \times_{\Map^\tensor(A,Y)[\mathbf{t}]}
  \Map^\tensor(B,Y)[\mathbf{t}],
  }
\end{align*}
is an acyclic fibration in $\C$. By Proposition~\ref{prop:calc_map_tensor_object} it is sufficient to verify each map 
\begin{align*}
  \xymatrix{
  \Map(B[\mathbf{s}],X[\mathbf{t}\boldsymbol{+}
  \mathbf{s}])^{\Sigma_s}\ar[d] \\
  \Map(A[\mathbf{s}],X[\mathbf{t}\boldsymbol{+}\mathbf{s}])^{\Sigma_s}
  \times_{\Map(A[\mathbf{s}],Y[\mathbf{t}\boldsymbol{+}\mathbf{s}])^{\Sigma_s}}
  \Map(B[\mathbf{s}],Y[\mathbf{t}\boldsymbol{+}\mathbf{s}])^{\Sigma_s}
  }
\end{align*}
is an acyclic fibration in $\C$. Proposition~\ref{prop:corner_map_fixed} completes the proof for this case. The other cases are similar. 
\end{proof}

\begin{proof}[Proof of Theorem~\ref{thm:circ_compatible}]
Statements (a) and (b) are equivalent, hence it is sufficient to verify statement (b). Suppose $\function{j}{A}{B}$ is an acyclic cofibration between cofibrant objects and $\function{p}{X}{Y}$ is a fibration. We want to verify each pullback corner map
\begin{align*}
  \xymatrix{
  \Map^\circ(B,X)[\mathbf{t}]\ar[r] & \Map^\circ(A,X)[\mathbf{t}]
  \times_{\Map^\circ(A,Y)[\mathbf{t}]}
  \Map^\circ(B,Y)[\mathbf{t}],
  }
\end{align*}
is an acyclic fibration in $\C$. If $t=0$, this map is an isomorphism by a calculation in Proposition~\ref{prop:map_seq_calc}. If $t\geq 1$, by Proposition~\ref{prop:calc_map_circ} it is sufficient to show each map 
\begin{align*}
  \xymatrix{
  \Map(B[\mathbf{s},\mathbf{t}],X[\mathbf{s}])^{\Sigma_s}\ar[d] \\
  \Map(A[\mathbf{s},\mathbf{t}],X[\mathbf{s}])^{\Sigma_s}
  \times_{\Map(A[\mathbf{s},\mathbf{t}],Y[\mathbf{s}])^{\Sigma_s}}
  \Map(B[\mathbf{s},\mathbf{t}],Y[\mathbf{s}])^{\Sigma_s}
  }
\end{align*}
is an acyclic fibration in $\C$. By Propositions \ref{prop:map_fixed_pts} and \ref{prop:corner_map_fixed}, it is enough to verify each map
\begin{align*}
  \xymatrix{
  A[\mathbf{s_1}]\tensor\dotsb\tensor A[\mathbf{s_t}]\ar[r] & 
  B[\mathbf{s_1}]\tensor\dotsb\tensor B[\mathbf{s_t}],
  }
\end{align*}
is an acyclic cofibration in $\C^{(\Sigma_{s_1}\times\dotsb\times\Sigma_{s_t})^\op}$. Proposition~\ref{prop:tensors_are_cofibrations} completes the proof for this case. The other cases are similar. 
\end{proof}

\section{Proofs}
\label{sec:proofs_for_modules}

The purpose of this section is to prove Theorems \ref{MainTheorem} and \ref{MainTheorem2}, which establish certain model category structures on algebras and left modules over an operad. In this paper, our primary method of establishing model structures is to use a small object argument. The reader unfamiliar with the small object argument may consult \cite[Section 7.12]{Dwyer_Spalinski} for a useful introduction, after which the (possibly transfinite) versions in \cite{Hirschhorn, Hovey, Schwede_Shipley} should appear quite natural. An account of these techniques is provided in \cite[Section 2]{Schwede_Shipley} which will be sufficient for our purposes. Our proofs of Theorems \ref{MainTheorem} and \ref{MainTheorem2} will reduce to verifying the conditions of Lemma 2.3(1) in \cite{Schwede_Shipley}. This verification amounts to a homotopical analysis of certain pushouts. The reader may contrast this with a path object approach studied in \cite{Berger_Moerdijk}, which amounts to verifying the conditions of Lemma 2.3(2) in \cite{Schwede_Shipley}; compare also \cite{Hinich, Spitzweck}.

\subsection{Arrays and symmetric arrays}
When working with left modules over an operad, we are naturally led to replace $(\C,\tensor,\unit)$ with $(\SymSeq,\tensor,1)$ as the underlying monoidal model category, and hence to working with symmetric arrays.

\begin{defn}\label{def:symmetric_array}
\quad
\begin{itemize}
\item A \emph{symmetric array} in $\C$ is a symmetric sequence in $\SymSeq$; i.e. a functor $\functor{A}{\Sigma^\op}{\SymSeq}$. An \emph{array} in $\C$ is a sequence in $\Seq$; i.e. a functor $\functor{A}{\Omega^\op}{\Seq}$.
\item $\SymArray:=\SymSeq^{\Sigma^\op}\Iso\C^{\Sigma^\op\times\Sigma^\op}$ is the category of symmetric arrays in $\C$ and their natural transformations. $\Array:=\Seq^{\Omega^\op}\Iso\C^{\Omega^\op\times\Omega^\op}$ is the category of arrays in $\C$ and their natural transformations.
\end{itemize}
\end{defn}

It is easy to check that the diagram categories $\SymArray$ and $\Array$ inherit corresponding projective model category structures, where the weak equivalences (resp. fibrations) are the objectwise weak equivalences (resp. objectwise fibrations). Each of these model structures is cofibrantly generated in which the generating cofibrations and acyclic cofibrations have small domains.

Note that all of the statements and constructions which were previously described in terms of $(\C,\tensor,\unit)$ are equally true for $(\SymSeq,\tensor,1)$ and $(\Seq,\hat{\tensor},1)$, and we usually cite and use the appropriate statements and constructions without further comment.

\subsection{Model structures in the $\Sigma$-operad case}
\label{sec:proofs_for_sigma_case}

\begin{proof}[Proof of Theorem~\ref{MainTheorem2}]
We will prove that the model structure on $\LtO$ (resp. $\AlgO$) is created by the adjunction
\begin{align*}
\xymatrix{
  \SymSeq\ar@<0.5ex>[r]^-{\capO\circ-} & \LtO\ar@<0.5ex>[l]^-{U}
}\quad\quad\Bigl(\text{resp.}\quad
\xymatrix{
  \C\ar@<0.5ex>[r]^-{\capO\circ(-)} & \AlgO\ar@<0.5ex>[l]^-{U}
}\Bigr)
\end{align*}
with left adjoint on top and $U$ the forgetful functor. Define a map $f$ in $\LtO$ to be a weak equivalence (resp. fibration) if $U(f)$ is a weak equivalence (resp. fibration) in $\SymSeq$. Similarly, define a map $f$ in $\AlgO$ to be a weak equivalence (resp. fibration) if $U(f)$ is a weak equivalence (resp. fibration) in $\C$. Define a map $f$ in $\LtO$ (resp. $\AlgO$) to be a cofibration if it has the left lifting property with respect to all acyclic fibrations in $\LtO$ (resp. $\AlgO$).

Consider the case of $\LtO$. We want to verify the model category axioms (MC1)-(MC5) in \cite{Dwyer_Spalinski}. By Propositions \ref{prop:colimits_exist} and \ref{prop:limits_exist}, we know that (MC1) is satisfied, and verifying (MC2) and (MC3) is clear. The (possibly transfinite) small object arguments described in the proof of \cite[Lemma 2.3]{Schwede_Shipley} reduce the verification of (MC5) to the verification of Proposition \ref{prop:needed_for_small_object} below. The first part of (MC4) is satisfied by definition, and the second part of (MC4) follows from the usual lifting and retract argument, as described in the proof of \cite[Lemma 2.3]{Schwede_Shipley}. This verifies the model category axioms. By construction, the model category is cofibrantly generated. Argue similarly for the case of $\AlgO$ by considering left $\capO$-modules concentrated at 0, together with Remark~\ref{rem:concentrated_at_zero}.
\end{proof}
 
\begin{rem}
Since the forgetful functors in this proof commute with filtered colimits (Proposition \ref{prop:reflexive_and_filtered_colimits}), the smallness conditions needed for the (possibly transfinite) small object arguments in \cite[Lemma 2.3]{Schwede_Shipley} are satisfied; here, we remind the reader of Basic Assumption \ref{BasicAssumption}.
\end{rem}

\subsection{Analysis of pushouts in the $\Sigma$-operad case}
\label{sec:sigma_operad_case_for_pushouts}
The purpose of this section is to prove the following proposition which we used in the proof of Theorem \ref{MainTheorem2}.

\begin{prop}\label{prop:needed_for_small_object}
Let $\capO$ be a $\Sigma$-operad and $A\in\LtO$. Assume that every object in $\SymArray$ is cofibrant. Consider any pushout diagram in $\LtO$ of the form,
\begin{align}\label{eq:needed_for_small_object}
\xymatrix{
  \capO\circ X\ar[r]^-{f}\ar[d]^{\id\circ i} & A\ar[d]^{j}\\
  \capO\circ Y\ar[r] & A\amalg_{(\capO\circ X)}(\capO\circ Y),
}
\end{align}
such that $\function{i}{X}{Y}$ is an acyclic cofibration in $\SymSeq$. Then $j$
is an acyclic cofibration in $\SymSeq$.
\end{prop}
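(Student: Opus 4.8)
The plan is to analyze the \emph{underlying} object of the pushout in $\SymSeq$ by means of an explicit filtration — the filtration technique of Elmendorf--Mandell alluded to in the introduction — and then to show that each stage of that filtration is an acyclic cofibration in $\SymSeq$. First I would reduce to working in $\SymSeq$: by Proposition~\ref{prop:reflexive_and_filtered_colimits} the forgetful functor $U\colon\LtO\rarrow\SymSeq$ preserves filtered colimits, so it suffices to produce a (sequential, hence filtered) filtration $A=A_0\rarrow A_1\rarrow A_2\rarrow\dotsb$ in $\SymSeq$ whose colimit is the underlying symmetric sequence of the pushout, with $j$ corresponding to $A_0\rarrow\colim_q A_q$, and to verify that each $A_{q-1}\rarrow A_q$ is an acyclic cofibration. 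Since acyclic cofibrations are closed under pushout and transfinite composition, this yields the claim for $j$.

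Second, I would construct the filtration. The key auxiliary object is a symmetric array $\capO_A\in\SymArray$ built from $\capO$ and $A$, whose $\mathbf{q}$-th term is (roughly) $\coprod_{t\geq 0}\capO[\mathbf{t}+\mathbf{q}]\tensor_{\Sigma_t}A^{\tensor t}$, an object of $\SymSeq$ equipped with a natural $\Sigma_q$-action permuting the $q$ distinguished input slots. Expanding $\capO\circ(-)$ via Proposition~\ref{prop:circ_calc} and using the distributivity isomorphisms of Proposition~\ref{prop:circle_and_powers} relating $\circ$ and $\tensor$, one identifies $A_q$ as the pushout in $\SymSeq$
\[
\xymatrix{
\capO_A[\mathbf{q}]\tensor_{\Sigma_q}Q^q_{q-1}\ar[r]\ar[d] & A_{q-1}\ar[d]\\
\capO_A[\mathbf{q}]\tensor_{\Sigma_q}Y^{\tensor q}\ar[r] & A_q,
}
\]
where $Q^q_{q-1}\rarrow Y^{\tensor q}$ is the $q$-fold iterated pushout-corner map of $\function{i}{X}{Y}$ in $(\SymSeq,\tensor,1)$, viewed as a map of $\Sigma_q$-objects. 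Checking that $\colim_q A_q$ carries the correct left $\capO$-module structure and has the universal property of the pushout is bookkeeping that I would treat using Proposition~\ref{prop:colimits_exist} and the compatibility of $\circ$ with these colimits.

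Third, and this is the crux, I would show that for each $q\geq 1$ the map $\capO_A[\mathbf{q}]\tensor_{\Sigma_q}(Q^q_{q-1}\rarrow Y^{\tensor q})$ is an acyclic cofibration in $\SymSeq$, so that $A_{q-1}\rarrow A_q$ is one as a pushout of it. I would split this into two parts. (a) The map $Q^q_{q-1}\rarrow Y^{\tensor q}$ is an acyclic cofibration of $\Sigma_q$-objects, i.e.\ in $\SymSeq^{\Sigma_q^{\op}}$: that it is a weak equivalence (indeed an acyclic cofibration non-equivariantly) follows by iterating Theorem~\ref{thm:well_behaved_tensor}, since $i$ is an acyclic cofibration; that it is moreover a cofibration \emph{equivariantly} follows from the equivariant tensor statement Proposition~\ref{prop:tensors_are_cofibrations}, applied to the associated-graded pieces of the tensor-power filtration of $Y^{\tensor q}$, together with the hypothesis that every symmetric array is cofibrant. (b) The functor $\capO_A[\mathbf{q}]\tensor_{\Sigma_q}(-)\colon\SymSeq^{\Sigma_q^{\op}}\rarrow\SymSeq$ is left Quillen: its right adjoint is a fixed-point mapping object, and Proposition~\ref{prop:corner_map_fixed} (with $G=H=\Sigma_q$, read in $\SymSeq$ in place of $\C$) shows this right adjoint preserves fibrations and acyclic fibrations, using that $\capO_A[\mathbf{q}]$ is cofibrant in $\SymSeq^{\Sigma_q^{\op}}$ — again supplied by the cofibrancy hypothesis. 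Applying this left Quillen functor to the acyclic cofibration of (a) gives the desired map.

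The \textbf{main obstacle} is precisely the equivariant step (a)--(b): controlling the $\Sigma_q$-orbit construction $\capO_A[\mathbf{q}]\tensor_{\Sigma_q}(-)$ so that it preserves the acyclic cofibration. This is exactly where the strong hypothesis ``every symmetric array is cofibrant'' is indispensable — it trivializes the equivariant cofibrancy questions (the analogue of projectivity of $\unit[\Sigma_q]$-modules in the characteristic-zero case) that would otherwise obstruct passing both $Q^q_{q-1}\rarrow Y^{\tensor q}$ and the object $\capO_A[\mathbf{q}]$ through the orbit functor. Assembling the filtration in the second step so that it genuinely reconstructs the module pushout is the other nontrivial piece of bookkeeping.
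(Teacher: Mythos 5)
Your overall architecture is the paper's: the coproduct description $A\amalg(\capO\circ Y)\iso\capO_A\circ(Y)$ (Proposition~\ref{prop:coproduct_modules}), the filtration of the pushout by stages $A_{t-1}\rarrow A_t$ obtained by pushing out $\capO_A[\mathbf{t}]\tensor_{\Sigma_t}(Q^t_{t-1}\rarrow Y^{\tensor t})$ (Proposition~\ref{prop:small_arg_pushout_modules}), and closure of acyclic cofibrations under pushout and (transfinite) composition. The problem is your step (a). You assert that $i_*\colon Q^q_{q-1}\rarrow Y^{\tensor q}$ is an acyclic cofibration \emph{in the projective model structure on} $\SymSeq^{\Sigma_q}$, citing Proposition~\ref{prop:tensors_are_cofibrations} and the hypothesis that every symmetric array is cofibrant. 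Neither source delivers this. Proposition~\ref{prop:tensors_are_cofibrations} concerns tensor products of objects carrying actions of \emph{separate} groups $G_1,\dotsc,G_n$ and produces cofibrations for the product group acting factorwise; it says nothing about the symmetric group \emph{permuting} the tensor factors, which is the action carried by $Y^{\tensor q}$ and $Q^q_{q-1}$. And the hypothesis that every symmetric array is cofibrant is a statement about \emph{objects} of $\SymSeq^{\Sigma_q^{\op}}$; it does not upgrade a map that is an underlying cofibration with (automatically) cofibrant source and target to a projective $\Sigma_q$-cofibration. Since the induced maps in Definition~\ref{def:filtration_setup_modules} are built from objects of the form $\Sigma_t\cdot_{\Sigma_{t-q}\times\Sigma_q}X^{\tensor(t-q)}\tensor(-)$, making them equivariant cofibrations would require projective cofibrancy of tensor powers as equivariant \emph{maps}, which is exactly the kind of statement that fails in general and that no result in the paper supplies.

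The paper's proof shows this equivariant claim is unnecessary, and this is the key maneuver you are missing. Proposition~\ref{prop:punctured_cube_calculation} establishes only that $i_*$ is an acyclic cofibration in the \emph{underlying} category $\SymSeq$ (all $\Sigma_t$-actions forgotten), by writing $i_*$ as an iterated pushout corner map and applying Theorem~\ref{thm:well_behaved_tensor}. Then Proposition~\ref{prop:pushouts_for_strong_cofibrancy} transposes the lifting problem for $\id\tensor_{\Sigma_t}i_*$ against a fibration $p\colon C\rarrow D$ into a lifting problem in $\SymSeq^{\Sigma_t^{\op}}$ for $\emptyset\rarrow\capO_A[\mathbf{t}]$ against the pullback corner map
\begin{align*}
\xymatrix{
\Map^\tensor(Y^{\tensor t},C)\ar[r] & \Map^\tensor(Q^t_{t-1},C)\times_{\Map^\tensor(Q^t_{t-1},D)}\Map^\tensor(Y^{\tensor t},D).
}
\end{align*}
By Theorem~\ref{thm:well_behaved_tensor}(b) this map is an acyclic fibration objectwise, hence in the projective structure on $\SymSeq^{\Sigma_t^{\op}}$, so the lift exists because $\capO_A[\mathbf{t}]$ is projectively cofibrant --- the one place the strong hypothesis is used, and a statement about an object rather than a map. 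In short: put the entire equivariant cofibrancy burden on $\capO_A[\mathbf{t}]$, not on $i_*$. (Your step (b) is essentially this point, though the right adjoint of $\capO_A[\mathbf{q}]\tensor_{\Sigma_q}(-)$ is $\Map^\tensor(\capO_A[\mathbf{q}],-)$ with its induced $\Sigma_q$-action, not a fixed-point object, so Proposition~\ref{prop:corner_map_fixed} with $G=H=\Sigma_q$ is not quite the statement you want; Theorem~\ref{thm:well_behaved_tensor} is.)
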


\begin{rem}\label{rem:concentrated_at_zero}
If $X,Y,A$ are concentrated at $0$, then the pushout diagram \eqref{eq:needed_for_small_object} is concentrated at $0$. To verify this, use Proposition~\ref{prop:useful_calculations_for_circ} and the construction of colimits described in Proposition~\ref{prop:colimits_exist}. 
\end{rem}

A first step in analyzing the pushouts in \eqref{eq:needed_for_small_object} is an analysis of certain coproducts. The following proposition is motivated by a similar argument given in \cite[Section 2.3]{Goerss_Hopkins_moduli} and \cite[Section 13]{Mandell} in the context of algebras over an operad.

\begin{prop}\label{prop:coproduct_modules}
Let $\capO$ be a $\Sigma$-operad, $A\in\LtO$, and $Y\in\SymSeq$. Consider any coproduct in $\LtO$ of the form
\begin{align}\label{eq:coproduct_diagram_modules}
\xymatrix{
  A\amalg(\capO\circ Y).
}
\end{align}  
There exists a symmetric array $\capO_A$ and natural isomorphisms
\begin{align*}
  A\amalg(\capO\circ Y) \Iso \capO_A\circ(Y) = 
  \coprod\limits_{q\geq 0}\capO_A[\mathbf{q}]
    \tensor_{\Sigma_q}Y^{\tensor q}
\end{align*}
in the underlying category $\SymSeq$. If $q\geq 0$, then $\capO_A[\mathbf{q}]$ is naturally isomorphic to a colimit of the form
\begin{align*}
  \capO_A[\mathbf{q}]\Iso
  \colim\biggl(
  \xymatrix{
    \coprod\limits_{p\geq 0}\capO[\mathbf{p}\boldsymbol{+}\mathbf{q}]
    \tensor_{\Sigma_p}A^{\tensor p} & 
    \coprod\limits_{p\geq 0}\capO[\mathbf{p}\boldsymbol{+}\mathbf{q}]
    \tensor_{\Sigma_p}(\capO\circ A)^{\tensor p}\ar@<-0.5ex>[l]^-{d_1}
    \ar@<-1.5ex>[l]_-{d_0}
  }
  \biggl),
\end{align*}
in $\SymSeq$, with $d_0$ induced by operad multiplication and $d_1$ induced by $\function{m}{\capO\circ A}{A}$. 
\end{prop}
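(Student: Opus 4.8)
The plan is to reduce the computation to the case of \emph{free} left $\capO$-modules and then pass to a colimit using the canonical presentation of $A$. First I would record two formal facts. Since $\capO\circ-\colon\SymSeq\rarrow\LtO$ is a left adjoint (Proposition~\ref{prop:free_forgetful_adjunctions}) it preserves coproducts, so for any $B\in\SymSeq$ the coproduct of free modules satisfies $(\capO\circ B)\amalg(\capO\circ Y)\Iso\capO\circ(B\amalg Y)$, where on the right $B\amalg Y$ is the objectwise coproduct in $\SymSeq$. Second, every $A\in\LtO$ is the canonical reflexive coequalizer of free modules
\[
  A\Iso\colim\bigl(\capO\circ A\ \leftleftarrows\ \capO\circ\capO\circ A\bigr),
\]
with the two maps induced by the operad multiplication $\capO\circ\capO\rarrow\capO$ and by the module action $\function{m}{\capO\circ A}{A}$; this diagram is split in the underlying category $\SymSeq$ via the unit, and in any case is preserved by the colimit functor $-\amalg(\capO\circ Y)$.

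The heart of the argument is the free case, which amounts to decomposing $\capO\circ(B\amalg Y)$ according to the number of tensor factors contributed by $Y$. Using the symmetric-sequence level form of \eqref{eq:circ_calc} I would expand $\capO\circ(B\amalg Y)\Iso\coprod_{t\geq0}\capO[\mathbf{t}]\tensor_{\Sigma_t}(B\amalg Y)^{\tensor t}$ and then expand each tensor power by the usual binomial decomposition in the symmetric monoidal category $(\SymSeq,\tensor,1)$, namely $(B\amalg Y)^{\tensor t}\Iso\coprod_{a+q=t}\bigl(B^{\tensor a}\tensor Y^{\tensor q}\bigr)\cdot_{\Sigma_a\times\Sigma_q}\Sigma_t$. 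Combining this with the orbit adjunctions of Proposition~\ref{prop:diagram_adjunctions} collapses the induced $\Sigma_t$ against $\tensor_{\Sigma_t}$, and a Fubini interchange of the two blocks of orbits separates the $Y$-factors, giving
\[
  \capO[\mathbf{t}]\tensor_{\Sigma_a\times\Sigma_q}\bigl(B^{\tensor a}\tensor Y^{\tensor q}\bigr)\Iso\bigl(\capO[\mathbf{a}+\mathbf{q}]\tensor_{\Sigma_a}B^{\tensor a}\bigr)\tensor_{\Sigma_q}Y^{\tensor q}.
\]
Reindexing by $q$ yields $(\capO\circ B)\amalg(\capO\circ Y)\Iso T_B\circ(Y)$, where $T_B$ is the symmetric array with $T_B[\mathbf{q}]:=\coprod_{p\geq0}\capO[\mathbf{p}+\mathbf{q}]\tensor_{\Sigma_p}B^{\tensor p}$, its $\Sigma_q$-action being the residual action of the second block $\Sigma_q\subset\Sigma_{p+q}$ on $\capO[\mathbf{p}+\mathbf{q}]$.

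Finally I would apply $-\amalg(\capO\circ Y)$ to the presentation of $A$ and invoke the free case with $B=A$ and $B=\capO\circ A$ to identify the result with the reflexive coequalizer $\colim\bigl(T_A\circ(Y)\leftleftarrows T_{\capO\circ A}\circ(Y)\bigr)$. Since the functor $(-)\circ(Y)=\coprod_{q\geq0}(-)[\mathbf{q}]\tensor_{\Sigma_q}Y^{\tensor q}$ is built from coproducts, $\Sigma_q$-orbits, and tensoring with $Y^{\tensor q}$, and since tensoring preserves reflexive coequalizers by Proposition~\ref{prop:colimit_properties_tensor}(a) while coproducts and orbits commute with all colimits, this coequalizer may be pushed inside to give $A\amalg(\capO\circ Y)\Iso\capO_A\circ(Y)$ with $\capO_A:=\colim(T_A\leftleftarrows T_{\capO\circ A})$ formed objectwise at each $\mathbf{q}$ (in $\SymSeq$, by Proposition~\ref{prop:reflexive_and_filtered_colimits}). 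This is exactly the asserted colimit formula for $\capO_A[\mathbf{q}]$, with $d_0$ induced by operad multiplication and $d_1$ by $m$, as these are precisely the images of the two structure maps of the presentation of $A$.

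I expect the main obstacle to be the equivariant bookkeeping in the free case: checking that the binomial decomposition together with the Fubini interchange of $\Sigma_a$- and $\Sigma_q$-orbits is natural and genuinely assembles the objects $T_B[\mathbf{q}]$ into a symmetric array (a functor $\Sigma^\op\rarrow\SymSeq$), and that these identifications are compatible with the two structure maps $d_0,d_1$ so that the final colimit is the one displayed in the statement.
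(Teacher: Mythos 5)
Your proposal is correct and follows essentially the same route as the paper: present $A\amalg(\capO\circ Y)$ as a reflexive coequalizer of coproducts of free modules, compute the free case $\capO\circ(B\amalg Y)$ via the binomial decomposition of $(B\amalg Y)^{\tensor t}$ and the orbit adjunctions to extract the $Y^{\tensor q}$ factors, and then pass the (split, hence underlying) coequalizer inside $(-)\circ(Y)$ using Propositions~\ref{prop:colimit_properties_tensor} and \ref{prop:reflexive_and_filtered_colimits}. The only difference is expository: you justify the reflexive coequalizer presentation and the final interchange of colimits more explicitly than the paper does.
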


\begin{rem}
Other possible notations for $\capO_A$ include $\U_\capO(A)$ or $\U(A)$; these are closer to the notation used in \cite{Elmendorf_Mandell, Mandell} and are not to be confused with the forgetful functors.
\end{rem}

\begin{proof}
It is easy to verify that the coproduct in $\eqref{eq:coproduct_diagram_modules}$ may be calculated by a reflexive coequalizer in $\LtO$ of the form, 
\begin{align*}
  A\amalg(\capO\circ Y)\Iso
  \colim\Bigl(
  \xymatrix{
    (\capO\circ A)\amalg (\capO\circ Y) & 
    (\capO\circ\capO\circ A)\amalg (\capO\circ Y)\ar@<0.5ex>[l]^-{d_1}
    \ar@<-0.5ex>[l]_-{d_0}
  }
  \Bigl).
\end{align*}
The maps $d_0$ and $d_1$ are induced by maps $\function{m}{\capO\circ\capO}{\capO}$ and $\function{m}{\capO\circ A}{A}$, respectively. By Proposition \ref{prop:reflexive_and_filtered_colimits}, this reflexive coequalizer may be calculated in the underlying category $\SymSeq$. There are natural isomorphisms,
\begin{align*}
  (\capO\circ A)\amalg(\capO\circ Y)&\Iso\capO\circ (A\amalg Y)
  \Iso \coprod\limits_{t\geq 0}\capO[\mathbf{t}]\tensor_{\Sigma_t}
  (A\amalg Y)^{\tensor t} \\
  &\Iso \coprod\limits_{q\geq 0}
  \Bigl(
  \coprod\limits_{p\geq 0}\capO[\mathbf{p}\boldsymbol{+}\mathbf{q}]
  \tensor_{\Sigma_p}A^{\tensor p}
  \Bigr)
  \tensor_{\Sigma_q}Y^{\tensor q},
\end{align*}
and similarly,
\begin{align*}
  (\capO\circ\capO\circ A)\amalg(\capO\circ Y)
  \Iso \coprod\limits_{q\geq 0}
  \Bigl(
  \coprod\limits_{p\geq 0}\capO[\mathbf{p}\boldsymbol{+}\mathbf{q}]
  \tensor_{\Sigma_p}(\capO\circ A)^{\tensor p}
  \Bigr)
  \tensor_{\Sigma_q}Y^{\tensor q},
\end{align*}
in the underlying category $\SymSeq$. The maps $d_0$ and $d_1$ similarly factor in the underlying category $\SymSeq$. 
\end{proof}

\begin{rem}
We have used the natural isomorphisms
\begin{align*}
  (A\amalg Y)^{\tensor t}\Iso\coprod\limits_{p+q=t}\Sigma_{p+q}
  \cdot_{\Sigma_p\times\Sigma_q}A^{\tensor p}\tensor Y^{\tensor q},
\end{align*}
in the proof of Proposition \ref{prop:coproduct_modules}. Note that this is just a dressed up form of binomial coefficients.
\end{rem}

\begin{defn}\label{def:filtration_setup_modules}
Let $\function{i}{X}{Y}$ be a morphism in $\SymSeq$ and $t\geq 1$. Define $Q_0^t:=X^{\tensor t}$ and $Q_t^t:=Y^{\tensor t}$. For $0<q<t$ define $Q_q^t$ inductively by the pushout diagrams
\begin{align*}
\xymatrix{
  \Sigma_t\cdot_{\Sigma_{t-q}\times\Sigma_{q}}X^{\tensor(t-q)}
  \tensor Q_{q-1}^q\ar[d]^{i_*}\ar[r]^-{\pr_*} & Q_{q-1}^t\ar[d]\\
  \Sigma_t\cdot_{\Sigma_{t-q}\times\Sigma_{q}}X^{\tensor(t-q)}
  \tensor Y^{\tensor q}\ar[r] & Q_q^t
}
\end{align*}
in $\SymSeq^{\Sigma_t}$. We sometimes denote $Q_q^t$ by $Q_q^t(i)$ to emphasize in the notation the map $\function{i}{X}{Y}$. The maps $\pr_*$ and $i_*$ are the obvious maps induced by $i$ and the appropriate projection maps.
\end{defn}

\begin{rem}
The construction $Q^t_{t-1}$ can be thought of as a $\Sigma_t$-equivariant version of the colimit of a punctured $t$-cube (Proposition~\ref{prop:cube_calculation}). If the category $\C$ is pointed, there is a natural isomorphism $Y^{\tensor t}/Q_{t-1}^t\Iso (Y/X)^{\tensor t}$.
\end{rem}

The following proposition provides a useful description of certain pushouts of left modules, and is motivated by a similar construction given in \cite[Section 12]{Elmendorf_Mandell} in the context of simplicial multifunctors of symmetric spectra.

\begin{prop}\label{prop:small_arg_pushout_modules}
Let $\capO$ be a $\Sigma$-operad, $A\in\LtO$, and $\function{i}{X}{Y}$ in $\SymSeq$. Consider any pushout diagram in $\LtO$ of the form,
\begin{align}\label{eq:small_arg_pushout_modules}
\xymatrix{
  \capO\circ X\ar[r]^-{f}\ar[d]^{\id\circ i} & A\ar[d]^{j}\\
  \capO\circ Y\ar[r] & A\amalg_{(\capO\circ X)}(\capO\circ Y).
}
\end{align}
The pushout in \eqref{eq:small_arg_pushout_modules} is naturally isomorphic to a filtered colimit of the form
\begin{align}\label{eq:filtered_colimit_modules}
  A\amalg_{(\capO\circ X)}(\capO\circ Y)\Iso 
  \colim\Bigl(
  \xymatrix{
    A_0\ar[r]^{j_1} & A_1\ar[r]^{j_2} & A_2\ar[r]^{j_3} & \dotsb
  }
  \Bigr)
\end{align}
in the underlying category $\SymSeq$, with $A_0:=\capO_A[\mathbf{0}]\Iso A$ and $A_t$ defined inductively by pushout diagrams in $\SymSeq$ of the form
\begin{align}\label{eq:good_filtration_modules}
\xymatrix{
  \capO_A[\mathbf{t}]\tensor_{\Sigma_t}Q_{t-1}^t\ar[d]^{\id\tensor_{\Sigma_t}i_*}
  \ar[r]^-{f_*} & A_{t-1}\ar[d]^{j_t}\\
  \capO_A[\mathbf{t}]\tensor_{\Sigma_t}Y^{\tensor t}\ar[r]^-{\xi_t} & A_t
}.
\end{align}
\end{prop}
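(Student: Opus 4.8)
The plan is to compute the pushout in the underlying category $\SymSeq$ and exhibit it as the announced sequential colimit. First I would record, via Proposition~\ref{prop:reflexive_and_filtered_colimits} together with Proposition~\ref{prop:colimits_exist}, that the pushout $P:=A\amalg_{(\capO\circ X)}(\capO\circ Y)$ may be formed in $\SymSeq$. Applying the coproduct identification of Proposition~\ref{prop:coproduct_modules} to both $A\amalg(\capO\circ Y)\Iso\capO_A\circ(Y)=\coprod_{q\ge 0}\capO_A[\mathbf{q}]\tensor_{\Sigma_q}Y^{\tensor q}$ and $A\amalg(\capO\circ X)\Iso\capO_A\circ(X)$, the underlying symmetric sequence of $P$ becomes the coequalizer in $\SymSeq$ of two maps $d_0,d_1\colon\capO_A\circ(X)\rightrightarrows\capO_A\circ(Y)$. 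Here $d_0=\capO_A\circ(i)$ is induced by $i$ and preserves the coproduct-degree $q$, while $d_1$ is induced by the adjoint $\tilde f\colon X\to A$ of $f$ and does \emph{not} preserve the coproduct-degree: it absorbs the $X$-inputs into $A$ by means of the structure maps of the symmetric array $\capO_A$ that are implicit in the colimit presentation of $\capO_A[\mathbf{q}]$.

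The second step is to recognize the filtration $\{A_t\}$ as the filtration of this coequalizer by ``pure $Y$-degree.'' The key combinatorial input is the $\Sigma_t$-equivariant filtration $X^{\tensor t}=Q_0^t\to\dotsb\to Q_{t-1}^t\to Q_t^t=Y^{\tensor t}$ of Definition~\ref{def:filtration_setup_modules}: the object $Q_{t-1}^t$ maps to $Y^{\tensor t}$ with image precisely the part of the $t$-fold tensor power carrying at least one tensor factor in the image of $i$, i.e. exactly the part on which the relation imposed by $d_0=d_1$ lets one push a factor down into $A$ --- landing, after reindexing, in the previously constructed stage $A_{t-1}$. Thus the passage from $A_{t-1}$ to $A_t$ glues in the top summand $\capO_A[\mathbf{t}]\tensor_{\Sigma_t}Y^{\tensor t}$ along its ``boundary'' $\capO_A[\mathbf{t}]\tensor_{\Sigma_t}Q_{t-1}^t$, with attaching map $f_*$ determined by $\tilde f$ and the structure of $\capO_A$; this is exactly the pushout square \eqref{eq:good_filtration_modules}, and $A_0=\capO_A[\mathbf{0}]\Iso A$ is the starting stage.

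The third step assembles these stages: I would check that the maps $A_{t-1}\to A_t$ are compatible with the coequalizer, that $\colim_t A_t$ carries the induced left $\capO$-module structure, and that it both coequalizes $(d_0,d_1)$ and is initial among such --- i.e. satisfies the universal property of $P$. The computational engine here is the binomial decomposition $(A\amalg Y)^{\tensor t}\Iso\coprod_{p+q=t}\Sigma_{p+q}\cdot_{\Sigma_p\times\Sigma_q}A^{\tensor p}\tensor Y^{\tensor q}$ recorded after Proposition~\ref{prop:coproduct_modules}, matched termwise against the coproduct decomposition of $\capO_A\circ(Y)$ and against the successive layers of the filtration $Q_q^t$, using repeatedly that $\circ$ and $\tensor$ commute with the relevant colimits (Propositions~\ref{prop:colimit_properties_circle} and~\ref{prop:circle_and_powers}).

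The hard part will be the equivariant bookkeeping in this last step: one must show that the coequalizer relation, restricted to pure $Y$-degree exactly $t$, reorganizes into a single pushout along $Q_{t-1}^t\to Y^{\tensor t}$ with no interaction between distinct degrees, and that the attaching map $f_*$ genuinely lands in $A_{t-1}$ rather than producing new relations at stage $t$. Equivalently, the crux is to verify that the filtration of $Y^{\tensor t}$ by the number of $i$-image factors is exactly compatible with absorbing those factors into $A$ via $\tilde f$, so that Definition~\ref{def:filtration_setup_modules} faithfully models the identifications defining $P$; once this compatibility is established, the remaining verifications are routine diagram chases.
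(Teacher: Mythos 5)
Your overall picture --- realize the pushout as a coequalizer computed in the underlying category, then filter by pure $Y$-degree with $\capO_A[\mathbf{t}]\tensor_{\Sigma_t}Q_{t-1}^t$ as the attaching locus --- is the right one, and you have correctly located the crux. But there is a genuine gap in your first step, and it propagates. You present the underlying symmetric sequence of the pushout as the coequalizer in $\SymSeq$ of the pair $d_0,d_1\colon\capO_A\circ(X)\rightrightarrows\capO_A\circ(Y)$, i.e.\ with source $A\amalg(\capO\circ X)$. That pair is not reflexive in general (a common section would require a map $\capO\circ Y\rarrow A\amalg(\capO\circ X)$ splitting both $\capO_A\circ(i)$ and the $\tilde{f}$-induced map), so Proposition~\ref{prop:reflexive_and_filtered_colimits} does not let you compute its coequalizer objectwise in $\SymSeq$; and the $\SymSeq$-coequalizer of this pair is in fact the wrong object. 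Decomposing your source yields only the ``pure'' relations, which identify each $\capO_A[\mathbf{t}]\tensor_{\Sigma_t}X^{\tensor t}\rarrow\capO_A[\mathbf{t}]\tensor_{\Sigma_t}Y^{\tensor t}$ with total absorption into $\capO_A[\mathbf{0}]$. The identifications that actually build the pushout in the underlying category are the mixed ones, on $\capO_A[\mathbf{p}+\mathbf{q}]\tensor_{\Sigma_p\times\Sigma_q}X^{\tensor p}\tensor Y^{\tensor q}$ for all $q\geq 0$ and $p>0$ (the diagrams \eqref{eq:useful_cone_diagrams}), which absorb $p$ factors and leave $q$ of them untouched; since there is no map $Y\rarrow X$, these are not formal consequences of the pure relations in $\SymSeq$. (Compare the pushout $A\amalg_{T(X)}T(Y)$ of associative algebras: the relation $a\tensor i(x)\tensor y\sim a\tilde{f}(x)\tensor y$ is not implied by $i(x_1)\dotsm i(x_t)\sim\tilde{f}(x_1)\dotsm\tilde{f}(x_t)$.) Concretely, the attaching map $f_*$ on the mixed part of $Q_{t-1}^t$ is not determined by your coequalizer presentation, and $\colim_t A_t$, while it does coequalize your pair, is not initial among objects that do, so the universal-property verification in your third step cannot close.

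The repair is to start instead from the \emph{reflexive} pair $\ol{i},\ol{f}\colon A\amalg(\capO\circ X)\amalg(\capO\circ Y)\rightrightarrows A\amalg(\capO\circ Y)$, whose common section is the inclusion of the two outer summands; this is what the paper does. Now Proposition~\ref{prop:reflexive_and_filtered_colimits} applies, and Proposition~\ref{prop:coproduct_modules} identifies the source with $\capO_A\circ(X\amalg Y)\Iso\coprod_{q\geq 0}\coprod_{p\geq 0}\capO_A[\mathbf{p}+\mathbf{q}]\tensor_{\Sigma_p\times\Sigma_q}X^{\tensor p}\tensor Y^{\tensor q}$, so that a cone is exactly a family $\{\varphi_q\}$ satisfying all the mixed relations \eqref{eq:useful_cone_diagrams} --- precisely the data from which the maps $f_*$ and $i_*$ in \eqref{eq:good_filtration_modules} are assembled. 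With that presentation in hand, the rest of your outline goes through essentially as in the paper, which even interposes the cruder filtration \eqref{eq:filtration_modules}, built from the full mixed coproduct rather than from $Q_{t-1}^t$, as a warm-up before passing to the glued version you describe.
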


\begin{proof}
It is easy to verify that the pushout in \eqref{eq:small_arg_pushout_modules} may be calculated by a reflexive coequalizer in $\LtO$ of the form
\begin{align*}
  A\amalg_{(\capO\circ X)}(\capO\circ Y)
  \Iso\colim\Bigl(
  \xymatrix{
    A\amalg(\capO\circ Y)
    & A\amalg(\capO\circ X)\amalg(\capO\circ Y)
    \ar@<-0.5ex>[l]_-{\ol{i}}\ar@<0.5ex>[l]^-{\ol{f}}
  }
  \Bigr).
\end{align*}
By Proposition \ref{prop:reflexive_and_filtered_colimits}, this reflexive coequalizer may be calculated in the underlying category $\SymSeq$. The idea is to reconstruct this coequalizer in $\SymSeq$ via a suitable filtered colimit in $\SymSeq$. A first step is to understand what it means to give a cone in $\SymSeq$ out of this diagram.

The maps $\ol{i}$ and $\ol{f}$ are induced by maps $\id\circ i_*$ and $\id\circ f_*$ which fit into the commutative diagram
\begin{align}
\label{eq:induced_maps_modules}
\xymatrix{
  \capO_A\circ(X\amalg Y)\ar@<-0.5ex>[d]_{\ol{i}}\ar@<0.5ex>[d]^{\ol{f}} & 
  \capO\circ(A\amalg X\amalg Y)\ar[l]
  \ar@<-0.5ex>[d]_{\id\circ i_*}\ar@<0.5ex>[d]^{\id\circ f_*} &
  \capO\circ\bigl((\capO\circ A)\amalg X\amalg Y\bigr)\ar@<-0.5ex>[l]_-{d_0}\ar@<0.5ex>[l]^-{d_1}
  \ar@<-0.5ex>[d]_{\id\circ i_*}\ar@<0.5ex>[d]^{\id\circ f_*}\\
  \capO_A\circ(Y) & \capO\circ(A\amalg Y)\ar[l] & 
  \capO\circ\bigl((\capO\circ A)\amalg Y\bigr)
  \ar@<-0.5ex>[l]_-{d_0}\ar@<0.5ex>[l]^-{d_1}
}
\end{align}
in $\LtO$, with rows reflexive coequalizer diagrams, and maps $i_*$ and $f_*$ in $\SymSeq$ induced by $\function{i}{X}{Y}$ and $\function{f}{X}{A}$ in $\SymSeq$. Here we have used the same notation for both $f$ and its adjoint. By Propositions~\ref{prop:coproduct_modules} and \ref{prop:reflexive_and_filtered_colimits}, the pushout in \eqref{eq:small_arg_pushout_modules} may be calculated by the colimit of the left-hand column of \eqref{eq:induced_maps_modules} in the underlying category $\SymSeq$. By \eqref{eq:induced_maps_modules}, $f$ induces maps $\ol{f}_{q,p}$ which make the diagrams
\begin{align*}
\xymatrix{
  \capO_A\circ(X\amalg Y)\Iso\coprod\limits_{q\geq 0}\coprod\limits_{p\geq 0}
  \Bigl(\ \Bigr)\ar[d]^{\ol{f}} &
  \Bigl(
  \capO_A[\mathbf{p}\boldsymbol{+}\mathbf{q}]\tensor_{\Sigma_p\times\Sigma_q}
  X^{\tensor p}\tensor Y^{\tensor q}
  \Bigr)\ar[l]_-{\inmap_{q,p}}\ar@{.>}[d]^{\ol{f}_{q,p}}\\
  \capO_A\circ(Y)\Iso\coprod\limits_{t\geq 0}\Bigl(\ \Bigr) &
  \Bigl(
  \capO_A[\mathbf{q}]\tensor_{\Sigma_q}Y^{\tensor q}
  \Bigr)\ar[l]_-{\inmap_q}
}
\end{align*}
in $\SymSeq$ commute. Similarly, $i$ induces maps $\ol{i}_{q,p}$ which make the diagrams
\begin{align*}
\xymatrix{
  \capO_A\circ(X\amalg Y)\Iso\coprod\limits_{q\geq 0}\coprod\limits_{p\geq 0}
  \Bigl(\ \Bigr)\ar[d]^{\ol{i}} &
  \Bigl(
  \capO_A[\mathbf{p}\boldsymbol{+}\mathbf{q}]\tensor_{\Sigma_p\times\Sigma_q}
  X^{\tensor p}\tensor Y^{\tensor q}
  \Bigr)\ar[l]_-{\inmap_{q,p}}\ar@{.>}[d]^{\ol{i}_{q,p}}\\
  \capO_A\circ(Y)\Iso\coprod\limits_{t\geq 0}\Bigl(\ \Bigr) &
  \Bigl(
  \capO_A[\mathbf{p}\boldsymbol{+}\mathbf{q}]
  \tensor_{\Sigma_{p+q}}Y^{\tensor (p+q)}
  \Bigr)\ar[l]_-{\inmap_{p+q}}
}
\end{align*}
in $\SymSeq$ commute. We can now describe more explicitly what it means to give a cone in $\SymSeq$ out of the left-hand column of \eqref{eq:induced_maps_modules}. Let $\function{\varphi}{\capO_A\circ(Y)}{\cdot}$ be a morphism in $\SymSeq$ and define $\varphi_q:=\varphi\inmap_q$. Then $\varphi\ol{i}=\varphi\ol{f}$ if and only if the diagrams
\begin{align}
\label{eq:useful_cone_diagrams}
\xymatrix{
  \capO_A[\mathbf{p}\boldsymbol{+}\mathbf{q}]
  \tensor_{\Sigma_p\times\Sigma_q}X^{\tensor p}
  \tensor Y^{\tensor q}\ar[d]^{\ol{i}_{q,p}}\ar[r]^-{\ol{f}_{q,p}} & 
  \capO_A[\mathbf{q}]\tensor_{\Sigma_q}Y^{\tensor q}\ar[d]^{\varphi_q}\\
  \capO_A[\mathbf{p}\boldsymbol{+}\mathbf{q}]
  \tensor_{\Sigma_{p+q}}Y^{\tensor(p+q)}
  \ar[r]^-{\varphi_{p+q}} & \cdot
}
\end{align}
commute for every $p,q\geq 0$. Since $\ol{i}_{q,0}=\id$ and $\ol{f}_{q,0}=\id$, it is sufficient to consider $q\geq 0$ and $p>0$.

The next step is to reconstruct the colimit of the left-hand column of \eqref{eq:induced_maps_modules} in $\SymSeq$ via a suitable filtered colimit in $\SymSeq$. The diagrams \eqref{eq:useful_cone_diagrams} suggest how to proceed. We will describe two filtration constructions that calculate the pushout \eqref{eq:small_arg_pushout_modules} in the underlying category $\SymSeq$. The purpose of presenting the filtration construction \eqref{eq:filtration_modules} is to provide motivation and intuition for the filtration construction \eqref{eq:good_filtration_modules} that we are interested in. Since \eqref{eq:filtration_modules} does not use the gluing construction in Definition \ref{def:filtration_setup_modules} it is simpler to verify that \eqref{eq:filtered_colimit_modules} is satisfied and provides a useful warm-up for working with \eqref{eq:good_filtration_modules}.

Define $A_0:=\capO_A[\mathbf{0}]\Iso A$ and for each $t\geq 1$ define $A_t$ by the pushout diagram
\begin{align}
\label{eq:filtration_modules}
\xymatrix{
  \coprod\limits_{\substack{p+q=t\\ q\geq 0,\, p>0}}
  \capO_A[\mathbf{p}\boldsymbol{+}\mathbf{q}]
  \tensor_{\Sigma_p\times\Sigma_q}X^{\tensor p}
  \tensor Y^{\tensor q}\ar[d]^{i_*}\ar[rr]^-{f_*} && A_{t-1}\ar[d]^{j_t}\\
  \capO_A[\mathbf{t}]\tensor_{\Sigma_t}Y^{\tensor t}\ar[rr]^-{\xi_t} && A_t
}
\end{align}
in $\SymSeq$. The maps $f_*$ and $i_*$ are induced by the appropriate maps $\ol{f}_{q,p}$ and $\ol{i}_{q,p}$. We want to use \eqref{eq:filtration_modules} and \eqref{eq:useful_cone_diagrams} to verify that \eqref{eq:filtered_colimit_modules} is satisfied; it is sufficient to verify the universal property of colimits. By Proposition \ref{prop:coproduct_modules}, the coproduct $A\amalg(\capO\circ Y)$ is naturally isomorphic to a filtered colimit of the form 
\begin{align*}
  A\amalg(\capO\circ Y)\Iso 
  \colim\Bigl(
  \xymatrix{
    B_0\ar[r] & B_1\ar[r] & B_2\ar[r] & \dotsb
  }
  \Bigr)
\end{align*}
in the underlying category $\SymSeq$, with $B_0:=\capO_A[\mathbf{0}]$ and $B_t$ defined inductively by pushout diagrams in $\SymSeq$ of the form
\begin{align*}
\xymatrix{
  {*}\ar[d]\ar[r] & B_{t-1}\ar[d]\\
  \capO_A[\mathbf{t}]\tensor_{\Sigma_t}Y^{\tensor t}\ar[r] & 
  B_t
}
\end{align*}
For each $t\geq 1$, there are naturally occurring maps $B_t\rarrow A_t$, induced by the appropriate $\xi_i$ and $j_i$ maps in \eqref{eq:filtration_modules}, which fit into the commutative diagram 
\begin{align*}
\xymatrix{
  &
  &
  &
  &
  &
  \capO_A\circ(X\amalg Y)
  \ar@<-0.5ex>[d]_{\ol{i}}\ar@<0.5ex>[d]^{\ol{f}}\\
  B_0\ar@{=}[d]\ar[r] & 
  B_1\ar[d]\ar[r] & 
  B_2\ar[d]\ar[r] & 
  \dotsb\ar[r] &
  \colim_t B_t\ar[r]^-{\Iso}\ar[d] & 
  \capO_A\circ(Y)\ar[d]_{\ol{\xi}}\\
  A_0\ar[r]^-{j_1} & 
  A_1\ar[r]^-{j_2} & 
  A_2\ar[r]^-{j_3} & 
  \dotsb\ar[r] &
  \colim_t A_t\ar@{=}[r] &
  \colim_t A_t
}
\end{align*}
in $\SymSeq$; the morphism of filtered diagrams induces a map $\ol{\xi}$. We claim that the right-hand column is a coequalizer diagram in $\SymSeq$. To verify that $\ol{\xi}$ satisfies $\ol{\xi}\,\ol{i}=\ol{\xi}\,\ol{f}$, by \eqref{eq:useful_cone_diagrams} it is enough to check that the diagrams

\begin{align*}
\xymatrix{
  \capO_A[\mathbf{p}\boldsymbol{+}\mathbf{q}]
  \tensor_{\Sigma_p\times\Sigma_q}X^{\tensor p}
  \tensor Y^{\tensor q}\ar[d]^{\ol{i}_{q,p}}\ar[r]^-{\ol{f}_{q,p}} & 
  \capO_A[\mathbf{q}]\tensor_{\Sigma_q}Y^{\tensor q}
  \ar[d]^{\ol{\xi}\,\inmap_q}\\
  \capO_A[\mathbf{p}\boldsymbol{+}\mathbf{q}]
  \tensor_{\Sigma_{p+q}}Y^{\tensor(p+q)}
  \ar[r]^-{\ol{\xi}\,\inmap_{p+q}} & \colim_t A_t
}
\end{align*}
commute for every $q\geq 0$ and $p>0$; this is easily verified using \eqref{eq:filtration_modules}, and is left to the reader. Let $\function{\varphi}{\capO_A\circ(Y)}{\cdot}$ be a morphism in $\SymSeq$ such that $\varphi\ol{i}=\varphi\ol{f}$. We want to verify that there exists a unique map $\function{\ol{\varphi}}{\colim_t A_t}{\cdot}$ in $\SymSeq$ such that $\varphi=\ol{\varphi}\,\ol{\xi}$. Consider the corresponding maps $\varphi_i$ in \eqref{eq:useful_cone_diagrams} and define $\ol{\varphi}_0:=\varphi_0$. For each $t\geq 1$, the maps $\varphi_i$ induce maps $\function{\ol{\varphi}_t}{A_t}{\cdot}$ such that $\ol{\varphi}_{t}\,j_{t}=\ol{\varphi}_{t-1}$ and $\ol{\varphi}_{t}\,\xi_{t}=\varphi_{t}$. In particular, the maps $\ol{\varphi}_t$ induce a map $\function{\ol{\varphi}}{\colim_t A_t}{\cdot}$ in $\SymSeq$. Using \eqref{eq:useful_cone_diagrams} it is an easy exercise (which the reader should verify) that $\ol{\varphi}$ satisfies $\varphi=\ol{\varphi}\,\ol{\xi}$ and that $\ol{\varphi}$ is the unique such map. Hence the filtration construction \eqref{eq:filtration_modules} satisfies \eqref{eq:filtered_colimit_modules}. 

One drawback of \eqref{eq:filtration_modules} is that it may be difficult to analyze homotopically. A hint at how to improve the construction is given by the observation that the collection of maps $\ol{f}_{q,p}$ and $\ol{i}_{q,p}$ satisfy many compatibility relations. The idea is to replace the coproduct in \eqref{eq:filtration_modules}, which is isomorphic to
\begin{align*}
  \capO_A[\mathbf{t}]\tensor_{\Sigma_t}\Bigl[(X\amalg Y)^{\tensor t}
  -Y^{\tensor t}\Bigr],
\end{align*}
with $\capO_A[\mathbf{t}]\tensor_{\Sigma_t}Q_{t-1}^t$, using the gluing construction $Q_{t-1}^t$ in Definition \ref{def:filtration_setup_modules}.  Here, $(X\amalg Y)^{\tensor t}-Y^{\tensor t}$ means the coproduct of all factors in $(X\amalg Y)^{\tensor t}$ except $Y^{\tensor t}$. Define $A_0:=\capO_A[\mathbf{0}]\Iso A$ and for each $t\geq 1$ define $A_t$ by the pushout diagram \eqref{eq:good_filtration_modules} in $\SymSeq$. The maps $f_*$ and $i_*$ are induced by the appropriate maps $\ol{f}_{q,p}$ and $\ol{i}_{q,p}$. Arguing exactly as above for the case of \eqref{eq:filtration_modules}, it is easy to use the diagrams \eqref{eq:useful_cone_diagrams} to verify that \eqref{eq:filtered_colimit_modules} is satisfied. The only difference is that the naturally occurring maps $B_t\rarrow A_t$ are induced by the appropriate $\xi_i$ and $j_i$ maps in \eqref{eq:good_filtration_modules} instead of in \eqref{eq:filtration_modules}.
\end{proof}

The following proposition follows from an argument in \cite[Proof of Lemma 6.2]{Schwede_Shipley}. In an effort to keep the paper relatively self-contained, we have included a proof in Section \ref{sec:punctured_cubes}.

\begin{prop}\label{prop:punctured_cube_calculation}
Let $\function{i}{X}{Y}$ be a cofibration (resp. acyclic cofibration) in $\SymSeq$. Then the induced map $\function{i_*}{Q_{t-1}^t}{Y^{\tensor t}}$ is a cofibration (resp. acyclic cofibration) in the underlying category $\SymSeq$. 
\end{prop}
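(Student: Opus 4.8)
The plan is to induct on $t$, after reducing to the underlying category $\SymSeq$ and recognizing $\function{i_*}{Q_{t-1}^t}{Y^{\tensor t}}$ as the $t$-fold iterated pushout-product power of $i$. First I would discard the $\Sigma_t$-action: the pushouts defining the $Q_q^t$ in Definition~\ref{def:filtration_setup_modules} live in $\SymSeq^{\Sigma_t}$, but the forgetful functor to $\SymSeq$ preserves colimits, and being a cofibration in $\SymSeq$ is exactly the assertion to be checked, so the equivariant structure is irrelevant to the conclusion. In $\SymSeq$ the object $Q_{t-1}^t$ is the colimit of the punctured $t$-cube whose vertices are the $t$-fold tensor products of copies of $X$ and $Y$, with the all-$Y$ vertex $Y^{\tensor t}$ deleted, and $i_*$ is the comparison map to that deleted terminal vertex.

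The key structural step is the recursive identity
\begin{align*}
  Q_{t-1}^t\Iso Y^{\tensor(t-1)}\tensor X
  \coprod_{Q_{t-2}^{t-1}\tensor X}Q_{t-2}^{t-1}\tensor Y,
\end{align*}
under which $i_*$ at level $t$ is identified with the pushout corner map of the two maps $\function{i_*}{Q_{t-2}^{t-1}}{Y^{\tensor(t-1)}}$ and $\function{i}{X}{Y}$. In other words, splitting off the final tensor factor exhibits the colimit of the punctured $t$-cube as the pushout-product of the colimit of the punctured $(t-1)$-cube with the single map $X\rarrow Y$, so that $i_*$ at level $t$ is the pushout-product of $i_*$ at level $t-1$ with $i$. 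Verifying this amounts to bookkeeping with the coproduct decompositions already recorded in Proposition~\ref{prop:tensor_calculation}.

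Granting the identity, the induction runs cleanly. For $t=1$ the map $i_*$ is $i$ itself, a cofibration (acyclic if $i$ is) by hypothesis. For the inductive step, $\function{i_*}{Q_{t-2}^{t-1}}{Y^{\tensor(t-1)}}$ is a cofibration by the inductive hypothesis and $\function{i}{X}{Y}$ is a cofibration by assumption, so Theorem~\ref{thm:well_behaved_tensor}(a) applied to this pair shows their pushout corner map, namely $\function{i_*}{Q_{t-1}^t}{Y^{\tensor t}}$, is a cofibration; moreover it is an acyclic cofibration as soon as either factor is a weak equivalence, which holds in particular whenever $i$ is. This gives the claim for all $t\geq 1$.

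The main obstacle is precisely the structural step: matching the symmetric filtration of Definition~\ref{def:filtration_setup_modules}, which is indexed by the total number of $Y$-factors, against the pushout-product recursion, which is indexed by splitting off one factor, and checking that both compute the colimit of the punctured cube in $\SymSeq$. The point of routing the induction through this pushout-product recursion, rather than through the equivariant attaching maps of Definition~\ref{def:filtration_setup_modules}, is that it applies Theorem~\ref{thm:well_behaved_tensor}(a) to the two genuine cofibrations $i_*$ and $i$ and therefore never requires $X$ or $Y$ to be cofibrant; a naive analysis of the equivariant attaching maps would instead force one to tensor with the possibly non-cofibrant object $X^{\tensor(t-q)}$, which need not preserve cofibrations.
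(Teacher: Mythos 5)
Your proposal is correct and follows essentially the same route as the paper: both identify $Q_{t-1}^t$ with the colimit of the punctured $t$-cube (Proposition~\ref{prop:cube_calculation}), exhibit $i_*$ recursively as the pushout corner map of $\function{i_*}{Q_{t-2}^{t-1}}{Y^{\tensor(t-1)}}$ with $\function{i}{X}{Y}$, and conclude by iterated application of Theorem~\ref{thm:well_behaved_tensor}(a). Your closing remark about why this avoids any cofibrancy hypothesis on $X$ and $Y$ is accurate and matches the role the pushout-product axiom plays in the paper's argument.
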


The following proposition provides a homotopical analysis of the pushout in Proposition \ref{prop:needed_for_small_object}.

\begin{prop}\label{prop:pushouts_for_strong_cofibrancy}
Let $\capO$ be a $\Sigma$-operad and $A\in\LtO$. Assume that $\capO_A$ is cofibrant in $\SymArray$. Consider any pushout diagram in $\LtO$ of the form,
\begin{align}
\xymatrix{
  \capO\circ X\ar[r]^-{f}\ar[d]^{\id\circ i} & A\ar[d]^{j}\\
  \capO\circ Y\ar[r] & A\amalg_{(\capO\circ X)}(\capO\circ Y),
}
\end{align}
such that $\function{i}{X}{Y}$ is a cofibration (resp. acyclic cofibration) in $\SymSeq$. Then each map $\function{j_t}{A_{t-1}}{A_t}$ in the filtration \eqref{eq:filtered_colimit_modules} is a cofibration (resp. acyclic cofibration) in $\SymSeq$. In particular, $j$ is a cofibration (resp. acyclic cofibration) in the underlying category $\SymSeq$.
\end{prop}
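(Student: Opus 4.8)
The plan is to prove the stronger per-stage statement first — that each $\function{j_t}{A_{t-1}}{A_t}$ in the filtration \eqref{eq:filtered_colimit_modules} is a cofibration (resp. acyclic cofibration) in $\SymSeq$ — and then obtain the claim for $j$ by passing to the colimit. By the pushout square \eqref{eq:good_filtration_modules}, $j_t$ is the cobase change of the map $\id\tensor_{\Sigma_t}i_*\colon\capO_A[\mathbf{t}]\tensor_{\Sigma_t}Q_{t-1}^t\rarrow\capO_A[\mathbf{t}]\tensor_{\Sigma_t}Y^{\tensor t}$. Since cofibrations (resp. acyclic cofibrations) are closed under cobase change, everything reduces to showing that $\id\tensor_{\Sigma_t}i_*$ is a cofibration (resp. acyclic cofibration) in $\SymSeq$. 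Granting this, the map $j$ is, via \eqref{eq:filtered_colimit_modules} together with $A_0\Iso A$, the transfinite composite of the $j_t$, hence itself a cofibration (resp. acyclic cofibration).

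Two inputs feed the key reduction. First, by Proposition~\ref{prop:punctured_cube_calculation}, the hypothesis on $\function{i}{X}{Y}$ guarantees that $\function{i_*}{Q_{t-1}^t}{Y^{\tensor t}}$ is a cofibration (resp. acyclic cofibration) in the underlying category $\SymSeq$; moreover $i_*$ is $\Sigma_t$-equivariant, i.e. a map in $\SymSeq^{\Sigma_t}$. Second, since $\capO_A$ is cofibrant in $\SymArray\Iso\SymSeq^{\Sigma^\op}$ and evaluation at $\mathbf{t}$ is a left adjoint that carries generating cofibrations to cofibrations (hence cofibrant objects to cofibrant objects), the object $\capO_A[\mathbf{t}]$ is cofibrant in $\SymSeq^{\Sigma_t^\op}$ equipped with its projective model structure.

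With these in hand I would verify that $\id\tensor_{\Sigma_t}i_*$ has the left lifting property against an arbitrary acyclic fibration (resp. fibration) $\function{p}{M}{N}$ in $\SymSeq$. Using the closed symmetric monoidal structure on $(\SymSeq,\tensor,1)$ together with the $\Sigma_t$-equivariant adjunction between $\tensor_{\Sigma_t}$ and $\Map^\tensor(-,-)$ (the analogues of Propositions~\ref{prop:group_shaped}, \ref{prop:diagram_adjunctions}, and \ref{prop:adjunction_tensor_object} for the base category $\SymSeq$), this lifting problem transposes into the question of whether $\capO_A[\mathbf{t}]$ lifts in $\SymSeq^{\Sigma_t^\op}$ against the pullback corner map of $\Map^\tensor(-,-)$ built from $i_*$ and $p$. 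Because fibrations and acyclic fibrations in the projective structure on $\SymSeq^{\Sigma_t^\op}$ are detected on underlying objects, and because the underlying map of this corner map is precisely the one appearing in Theorem~\ref{thm:well_behaved_tensor}(b), the corner map is an acyclic fibration in $\SymSeq^{\Sigma_t^\op}$ in both cases (using that $i_*$ is a cofibration and $p$ an acyclic fibration in the first case, and that $i_*$ is an acyclic cofibration in the second). Since $\capO_A[\mathbf{t}]$ is cofibrant, the desired lift exists, completing the reduction.

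The main obstacle is the middle of this argument: making the non-free coinvariants $\tensor_{\Sigma_t}$ cooperate with cofibrancy. Concretely, the care lies in setting up the equivariant adjunction with the correct variances, so that $\Map^\tensor(Q_{t-1}^t,M)$ carries the right $\Sigma_t$-action matching $\capO_A[\mathbf{t}]\in\SymSeq^{\Sigma_t^\op}$, and in confirming that the cofibrancy of the symmetric array $\capO_A$ is exactly what licenses the lift; it is here, and only here, that the standing hypothesis on $\capO_A$ is consumed. The remaining pieces — cobase change, transfinite composition, and the translation of the corner map into Theorem~\ref{thm:well_behaved_tensor}(b) — are formal.
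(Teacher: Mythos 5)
Your proposal is correct and follows essentially the same route as the paper: reduce $j_t$ to the cobase change of $\id\tensor_{\Sigma_t}i_*$, transpose the lifting problem through the coinvariants and the $\Map^\tensor$ adjunction into $\SymSeq^{\Sigma_t^\op}$, and conclude from cofibrancy of $\capO_A[\mathbf{t}]$ together with Theorem~\ref{thm:well_behaved_tensor}(b) and Proposition~\ref{prop:punctured_cube_calculation} applied to $i_*$. The only (harmless) differences are that you make explicit the transfinite-composition step for $j$ and the justification that evaluation at $\mathbf{t}$ preserves cofibrancy, both of which the paper leaves implicit.
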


\begin{proof}
Suppose $\function{i}{X}{Y}$ is an acyclic cofibration in $\SymSeq$. We want to show that each map $\function{j_t}{A_{t-1}}{A_t}$ is an acyclic cofibration in $\SymSeq$. By the construction of $j_t$ in Proposition~\ref{prop:small_arg_pushout_modules}, it is sufficient to verify each $\id\tensor_{\Sigma_t}i_*$ in \eqref{eq:good_filtration_modules} is an acyclic cofibration. Suppose $\function{p}{C}{D}$ is a fibration in $\SymSeq$. We want to verify $\id\tensor_{\Sigma_t}i_*$ has the left lifting property with respect to $p$. 
\begin{align*}
\xymatrix{
  \capO_A[\mathbf{t}]\tensor_{\Sigma_t}Q_{t-1}^t
  \ar[d]_{\id\tensor_{\Sigma_t}i_*}\ar[r] & C\ar[d]^{p} & 
  \capO_A[\mathbf{t}]\tensor Q_{t-1}^t\ar[d]_{\id\tensor i_*}\ar[r] & C\ar[d]^{p}\\
  \capO_A[\mathbf{t}]\tensor_{\Sigma_t}
  Y^{\tensor t}\ar[r]\ar@{.>}[ur] & D &
  \capO_A[\mathbf{t}]\tensor Y^{\tensor t}\ar[r]\ar@{.>}[ur] & D,
}
\end{align*}
The left-hand solid commutative diagram in $\SymSeq$ has a lift if and only if the right-hand solid diagram in $\SymSeq^{\Sigma_t^\op}$ has a lift. Hence it is sufficient to verify that the solid diagram 
\begin{align*}
\xymatrix{
  \emptyset\ar[r]\ar[d] & \Map^\tensor(Y^{\tensor t}, C)\ar[d]^{(*)}\\
  \capO_A[\mathbf{t}]\ar[r]\ar@{.>}[ur] & \Map^\tensor(Q_{t-1}^t,C)
  \times_{\Map^\tensor(Q_{t-1}^t,D)}
  \Map^\tensor(Y^{\tensor t},D)
}
\end{align*}
in $\SymSeq^{\Sigma_t^\op}$ has a lift. By assumption, $\capO_A$ is a cofibrant symmetric array, hence $\capO_A[\mathbf{t}]$ is cofibrant in $\SymSeq^{\Sigma_t^\op}$ and it is sufficient to verify $(*)$ is an acyclic fibration. By Theorem~\ref{thm:well_behaved_tensor}, it is enough to verify $\function{i_*}{Q_{t-1}^t}{Y^{\tensor t}}$ is an acyclic cofibration in the underlying category $\SymSeq$, and Proposition~\ref{prop:punctured_cube_calculation} finishes the proof. The other case is similar.
\end{proof}

\begin{proof}[Proof of Proposition~\ref{prop:needed_for_small_object}]
By assumption every symmetric array is cofibrant, hence $\capO_A$ is cofibrant, and by Proposition~\ref{prop:pushouts_for_strong_cofibrancy} the map $j$ is an acyclic cofibration in the underlying category $\SymSeq$. 
\end{proof}

\subsection{Punctured cubes}
\label{sec:punctured_cubes}

The purpose of this section is to prove Proposition \ref{prop:punctured_cube_calculation}. 

\begin{defn} Let $t\geq 2$. 
\begin{itemize}
\item $\Cube_t$ is the category with objects the vertices $(v_1,\dotsc,v_t)\in\{0,1\}^t$ of the unit $t$-cube; there is at most one morphism between any two objects, and there is a morphism
\begin{align*}
  (v_1,\dotsc,v_t)\rarrow  (v_1',\dotsc,v_t')
\end{align*}
if and only if $v_i\leq v_i'$ for every $1\leq i\leq t$. In particular, $\Cube_t$ is the category associated to a partial order on the set $\{0,1\}^t$.
\item The \emph{punctured cube} $\pCube_t$ is the full subcategory of $\Cube_t$ with all objects except the terminal object $(1,\dotsc,1)$ of $\Cube_t$.
\end{itemize}
\end{defn}

Let $\function{i}{X}{Y}$ be a morphism in $\SymSeq$. It will be useful to introduce an associated functor $\functor{w}{\pCube_t}{\SymSeq}$ defined objectwise by
\begin{align*}
  w(v_1,\dotsc,v_t):=c_1\tensor\dotsb\tensor c_t\quad\quad\text{with}
  \quad\quad
  c_i :=
  \left\{
    \begin{array}{rl}
    X,&\text{for $v_i=0$,}\\
    Y,&\text{for $v_i=1$,}
  \end{array}
  \right.
\end{align*}
and with morphisms induced by $\function{i}{X}{Y}$. The following proposition is an exercise left to the reader.
\begin{prop}\label{prop:cube_calculation}
Let $\function{i}{X}{Y}$ be a morphism in $\SymSeq$ and $t\geq 2$. There are natural isomorphisms
$
  Q_{t-1}^{t}\Iso
  \colim(w)
$
in $\SymSeq$.
\end{prop}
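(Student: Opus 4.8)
The plan is to compute $\colim(w)$ through the natural filtration of the punctured cube $\pCube_t$ by the number of coordinates equal to $1$ (equivalently, the number of tensor factors equal to $Y$), and to match this filtration term-by-term with the inductive construction of the $Q_q^t$ in Definition~\ref{def:filtration_setup_modules}. For $0\leq q\leq t-1$ let $\pCube_t^{\leq q}\subset\pCube_t$ be the full subcategory on those vertices $(v_1,\dotsc,v_t)$ with $v_1+\dotsb+v_t\leq q$; since the unique excluded vertex $(1,\dotsc,1)$ has coordinate-sum $t$, we have $\pCube_t^{\leq t-1}=\pCube_t$. I would prove the refined statement
\[
  Q_q^t\Iso\colim\bigl(w|_{\pCube_t^{\leq q}}\bigr)
\]
by induction (on $t$, and within that on $q$), from which the proposition is the case $q=t-1$.

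The base case $q=0$ is immediate, since $\pCube_t^{\leq 0}$ is the single vertex $(0,\dotsc,0)$ with value $X^{\tensor t}=Q_0^t$. For the inductive step I would argue that passing from $\pCube_t^{\leq q-1}$ to $\pCube_t^{\leq q}$ adjoins exactly the level-$q$ vertices, and that this is effected by the pushout square defining $Q_q^t$. The key observations are: (i) the level-$q$ vertices are pairwise incomparable (equal coordinate-sum), so they are glued in by a single pushout, and they form one $\Sigma_t$-orbit indexed by $\Sigma_t/(\Sigma_{t-q}\times\Sigma_q)$, a vertex with exactly $q$ ones being determined by the $q$-element subset $S\subset\{1,\dotsc,t\}$ recording the positions of the $Y$'s; (ii) for the representative $S=\{t-q+1,\dotsc,t\}$, the full subdiagram strictly below its vertex is the punctured $q$-cube in the coordinates of $S$ with the factors outside $S$ held equal to $X$, so its colimit (the latching object) is $X^{\tensor(t-q)}\tensor Q_{q-1}^q$, using that $-\tensor X^{\tensor(t-q)}$ is a left adjoint (Proposition~\ref{prop:tensor_monoidal}) together with the proposition applied in dimension $q<t$ to the coordinates of $S$ (the cases $q\leq 1$ being immediate from the definitions). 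Inducing these latching data up along $\Sigma_t/(\Sigma_{t-q}\times\Sigma_q)$ via the copower, and using that a poset-indexed colimit is built by gluing in each filtration layer along the coproduct of its latching objects, identifies the transition $\colim(w|_{\pCube_t^{\leq q-1}})\rarrow\colim(w|_{\pCube_t^{\leq q}})$ with precisely the pushout of Definition~\ref{def:filtration_setup_modules}; matching the maps (both $i_*$ and $\pr_*$ are induced by $i$ and the evident projections) completes the step.

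I expect the main obstacle to be the bookkeeping in this inductive step: checking that the ``attach the level-$q$ cells'' pushout for the poset colimit coincides with the stated pushout once the $\Sigma_t$-action is accounted for — in particular, that the single orbit of new vertices is correctly assembled by the induced copower $\Sigma_t\cdot_{\Sigma_{t-q}\times\Sigma_q}(-)$, and that the boundary map from the latching object is exactly $\pr_*$. This is a standard cellular (Reedy-type) argument, but the equivariance forces one to track the subgroups $\Sigma_{t-q}\times\Sigma_q$ and to recurse through lower-dimensional instances of the proposition; once this is set up, everything else follows from the universal property of colimits.
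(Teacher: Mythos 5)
Your argument is correct, and it fills a gap the paper deliberately leaves open: Proposition~\ref{prop:cube_calculation} is stated as ``an exercise left to the reader,'' so there is no proof in the paper to compare against. Your route --- filter $\pCube_t$ by coordinate sum, identify the latching object at a level-$q$ vertex $v_S$ as $X^{\tensor(t-q)}\tensor Q_{q-1}^q$ (tensoring with a fixed object preserves colimits, plus the proposition in dimension $q<t$), assemble the single $\Sigma_t$-orbit of level-$q$ vertices into the copower $\Sigma_t\cdot_{\Sigma_{t-q}\times\Sigma_q}(-)$, and match the resulting ``attach level $q$'' pushout with the defining square of $Q_q^t$ --- is the natural one, precisely because Definition~\ref{def:filtration_setup_modules} \emph{is} the level filtration of the punctured cube written equivariantly; the refined statement $Q_q^t\iso\colim(w|_{\pCube_t^{\leq q}})$ is the right induction hypothesis, and your handling of the low cases $q\leq 1$ and of the double induction (on $t$, then on $q$) is sound. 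It is worth noting that the paper's own proof of Proposition~\ref{prop:punctured_cube_calculation} computes the same colimit by a different, non-equivariant decomposition --- peeling off the last tensor coordinate, so that $Q_{t-1}^t$ is exhibited as the pushout corner of $Q_{t-2}^{t-1}\tensor X\rarrow Q_{t-2}^{t-1}\tensor Y$ against $Q_{t-2}^{t-1}\tensor X\rarrow Y^{\tensor(t-1)}\tensor X$; that decomposition is better suited to iterating Theorem~\ref{thm:well_behaved_tensor}, whereas yours is the one that actually ties $\colim(w)$ to the $\Sigma_t$-equivariant inductive definition of $Q_q^t$, which is what this proposition asserts. The only point I would ask you to make explicit in a written version is the general latching-object lemma you invoke (for a functor out of a direct poset, the colimit over the $q$-skeleton is the pushout of the colimit over the $(q-1)$-skeleton along the coproduct of latching maps); it is standard, but it is the load-bearing step.
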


\begin{proof}[Proof of Proposition~\ref{prop:punctured_cube_calculation}.]
Suppose $\function{i}{X}{Y}$ is an acyclic cofibration in $\SymSeq$. The colimit of the diagram $\functor{w}{\pCube_t}{\SymSeq}$ may be computed inductively using pushout corner maps, and hence by  Proposition~\ref{prop:cube_calculation} there are natural isomorphisms
\begin{align*}
  Q_1^2&\Iso Y\tensor X\amalg_{X\tensor X} X\tensor Y,\\
  Q_2^3&\Iso Y\tensor Y\tensor X\amalg_{\bigl(Y\tensor X\amalg_{X\tensor X} 
  X\tensor Y\bigr)\tensor X}
  \bigl(Y\tensor X\amalg_{X\tensor X} X\tensor Y\bigr)\tensor Y,\ \dotsc
\end{align*}
in the underlying category $\SymSeq$. The same argument provides an inductive construction of the induced map $\function{i_*}{Q_{t-1}^t}{Y^{\tensor t}}$ in the underlying category $\SymSeq$; using the natural isomorphisms in Proposition~\ref{prop:cube_calculation}, for each $t\geq 2$ the $Q_{t-1}^t$ fit into pushout diagrams
\begin{align*}
\xymatrix{
  Q_{t-2}^{t-1}\ar[r]^-{\id\tensor i}
  \ar[d]^{i_*\tensor\id}\tensor X & Q_{t-2}^{t-1}\tensor Y\ar[d]
  \ar@/^1pc/[ddr]^{i_*\tensor\id}\\
  Y^{\tensor(t-1)}\tensor X\ar[r]
  \ar@/_1pc/[drr]^-{\id\tensor i} & 
  Q_{t-1}^t\ar@{.>}[dr]_{\existsunique}^{i_*}\\
  && Y^{\tensor t}
}
\end{align*}
in the underlying category $\SymSeq$ with induced map $\function{i_*}{Q_{t-1}^t}{Y^{\tensor t}}$ the indicated pushout corner map. By iterated applications of Theorem~\ref{thm:well_behaved_tensor}, $i_*$ is an acyclic cofibration in $\SymSeq$. The case for cofibrations is similar.
\end{proof}

\subsection{Model structures in the $\Omega$-operad case}

The purpose of this section is to prove Theorem \ref{MainTheorem}. The strong cofibrancy condition assumed in Theorem~\ref{MainTheorem2} is replaced here by the weaker \emph{monoid axiom} \cite[Definition 3.3]{Schwede_Shipley}, but at the cost of dropping all $\Sigma$-actions; i.e., working with $\Omega$-operads instead of $\Sigma$-operads.

\begin{defn}\label{def:monoid_axiom}
A monoidal model category satisfies the \emph{monoid axiom} if every map which is a (possibly transfinite) composition of pushouts of maps of the form
\begin{align*}
  \function{f\tensor\id}{K\tensor B}{L\tensor B}
\end{align*}
such that $\function{f}{K}{L}$ is an acyclic cofibration and $B\in\C$,
is a weak equivalence.
\end{defn}

\begin{proof}[Proof of Theorem~\ref{MainTheorem}]
We will prove that the model structure on $\LtO$ (resp. $\AlgO$) is created by the adjunction
\begin{align*}
\xymatrix{
  \Seq\ar@<0.5ex>[r]^-{\capO\circhat-} & \LtO\ar@<0.5ex>[l]^-{U}
}\quad\quad\Bigl(\text{resp.}\quad
\xymatrix{
  \C\ar@<0.5ex>[r]^-{\capO\circhat(-)} & \AlgO\ar@<0.5ex>[l]^-{U}
}\Bigr)
\end{align*}
with left adjoint on top and $U$ the forgetful functor. Define a map $f$ in $\LtO$ to be a weak equivalence (resp. fibration) if $U(f)$ is a weak equivalence (resp. fibration) in $\Seq$. Similarly, define a map $f$ in $\AlgO$ to be a weak equivalence (resp. fibration) if $U(f)$ is a weak equivalence (resp. fibration) in $\C$. Define a map $f$ in $\LtO$ (resp. $\AlgO$) to be a cofibration if it has the left lifting property with respect to all acyclic fibrations in $\LtO$ (resp. $\AlgO$).

The model category axioms are verified exactly as in the proof of Theorem \ref{MainTheorem2}; (MC5) is verified by Proposition \ref{prop:needed_for_small_object_omega} below. By construction, the model category is cofibrantly generated.
\end{proof}
 
\subsection{Analysis of pushouts in the $\Omega$-operad case}
The purpose of this section is to prove the following proposition which we used in the proof of Theorem~\ref{MainTheorem}. Since the constructions and arguments are very similar to the $\Sigma$-operad case in Section \ref{sec:sigma_operad_case_for_pushouts}, we only include the constructions and propositions needed for future reference, and state how the arguments differ from the symmetric case.

\begin{prop}\label{prop:needed_for_small_object_omega}
Let $\capO$ be an $\Omega$-operad and $A\in\LtO$. Assume that $(\C,\tensor,\unit)$ satisfies the monoid axiom. Then every (possibly transfinite) composition of pushouts in $\LtO$ of the form
\begin{align}\label{eq:needed_for_small_object_omega}
\xymatrix{
  \capO\circhat X\ar[r]^{f}\ar[d]^{\id\circhat i} & A\ar[d]^{j}\\
  \capO\circhat Y\ar[r] & A\amalg_{(\capO\circhat X)}(\capO\circhat Y),
}
\end{align}
such that $\function{i}{X}{Y}$ is an acyclic cofibration in $\Seq$,
is a weak equivalence in the underlying category $\Seq$.
\end{prop}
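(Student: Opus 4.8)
The plan is to imitate the $\Sigma$-operad analysis of Section~\ref{sec:sigma_operad_case_for_pushouts}, but to exploit the absence of symmetric-group quotients in the $\Omega$-case so that the monoid axiom can replace the cofibrancy hypothesis on $\capO_A$. First I would invoke the $\Omega$-operad analogues of Propositions~\ref{prop:coproduct_modules} and~\ref{prop:small_arg_pushout_modules}: there is an array $\capO_A$ with a natural isomorphism $A\amalg(\capO\circhat Y)\iso\coprod_{q\geq 0}\capO_A[\mathbf{q}]\hat{\tensor}Y^{\hat{\tensor}q}$ in the underlying category $\Seq$, and each pushout $j$ of the form~\eqref{eq:needed_for_small_object_omega} is, in $\Seq$, the filtered colimit of a sequence $A=A_0\to A_1\to A_2\to\cdots$ in which $A_t$ is obtained from $A_{t-1}$ by a pushout of the map $\id\hat{\tensor}i_*\colon\capO_A[\mathbf{t}]\hat{\tensor}Q_{t-1}^t\to\capO_A[\mathbf{t}]\hat{\tensor}Y^{\hat{\tensor}t}$. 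The crucial structural point is that \emph{no quotient by $\Sigma_t$ intervenes}, because $\Omega_t$ has only the identity morphism; this is precisely what distinguishes the construction from the filtration~\eqref{eq:good_filtration_modules} in the symmetric case.

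I would then record, via the analogue of Proposition~\ref{prop:punctured_cube_calculation} for sequences (which rests on Theorem~\ref{thm:well_behaved_tensor} applied to $\hat{\tensor}$), that $i_*\colon Q_{t-1}^t\to Y^{\hat{\tensor}t}$ is an acyclic cofibration in $\Seq$ whenever $i$ is. Since $\hat{\tensor}$ is symmetric (Proposition~\ref{prop:tensor_monoidal}), the map $\id\hat{\tensor}i_*$ is isomorphic to $i_*\hat{\tensor}\id_{\capO_A[\mathbf{t}]}$, that is, a map of exactly the shape $f\hat{\tensor}\id_B$ appearing in the monoid axiom, with $f=i_*$ an acyclic cofibration and $B=\capO_A[\mathbf{t}]$ an \emph{arbitrary} object of $\Seq$ (in particular not assumed cofibrant, which is why no cofibrancy of $\capO_A$ is needed). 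Because the forgetful functor to $\Seq$ preserves the relevant filtered colimits and pushouts (Proposition~\ref{prop:reflexive_and_filtered_colimits}), any (possibly transfinite) composition of pushouts of~\eqref{eq:needed_for_small_object_omega} is, when viewed in $\Seq$, a (possibly transfinite) composition of pushouts of maps of the form $f\hat{\tensor}\id_B$ with $f$ an acyclic cofibration.

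The one genuinely substantive point---and the main obstacle---is that the monoid axiom is hypothesized for $(\C,\tensor,\unit)$, whereas the maps just produced live in $(\Seq,\hat{\tensor},1)$; I would bridge this gap by a degreewise reduction. Since $\Omega$ has only identity morphisms, $\Seq\iso\prod_{n\geq 0}\C$, so pushouts, transfinite composites, and weak equivalences in $\Seq$ are all computed degreewise, and by~\eqref{eq:tensors3} the product $\hat{\tensor}$ is in each degree a finite coproduct of $\tensor$-products in $\C$. Consequently, in each degree $\mathbf{m}$ the map $(f\hat{\tensor}\id_B)[\mathbf{m}]$ is a coproduct of maps of the form $g\tensor\id$ with $g$ an acyclic cofibration of $\C$; as a coproduct of maps is a composite of pushouts of its summands, the degree-$\mathbf{m}$ component of the full transfinite composite is itself a transfinite composite of pushouts of maps $g\tensor\id$ with $g$ an acyclic cofibration in $\C$. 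The monoid axiom for $\C$ then makes each degreewise component a weak equivalence, whence the composite is a weak equivalence in $\Seq$, as required. In short, the vanishing of the symmetric-group quotients is exactly what permits the monoid axiom for $\C$ to be invoked directly, whereas in the $\Sigma$-case the quotients $\tensor_{\Sigma_t}$ force one to impose cofibrancy on $\capO_A$ instead.
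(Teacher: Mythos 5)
Your proposal is correct and follows essentially the same route as the paper: the filtration of the pushout into pushouts along $\id\hat{\tensor}i_*$ (Proposition~\ref{prop:small_arg_pushout_modules_omega}), the fact that $i_*\colon Q_{t-1}^t\to Y^{\hat{\tensor}t}$ is an acyclic cofibration (Proposition~\ref{prop:induced_map_in_punctured_cube_omega}), and the degreewise reduction via \eqref{eq:tensors3} to the monoid axiom for $\C$, which the paper isolates as Proposition~\ref{prop:monoid_axiom_for_seq}. The only difference is organizational: you inline the verification of the monoid axiom for $(\Seq,\hat{\tensor},1)$ rather than stating it as a separate lemma.
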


\begin{rem}\label{rem:concentrated_at_zero_omega}
If $X,Y,A$ are concentrated at $0$, then the pushout diagram \eqref{eq:needed_for_small_object_omega} is concentrated at $0$. To verify this, argue as in Remark~\ref{rem:concentrated_at_zero}.
\end{rem}

The following is an $\Omega$-operad version of Proposition \ref{prop:coproduct_modules}.

\begin{prop}\label{prop:coproduct_modules_omega}
Let $\capO$ be an $\Omega$-operad, $A\in\LtO$, and $Y\in\Seq$. Consider any coproduct in $\LtO$ of the form
\begin{align}\label{eq:coproduct_diagram_modules_omega}
\xymatrix{
  A\amalg(\capO\circhat Y).
}
\end{align}  
There exists an array $\capO_A$ and natural isomorphisms
\begin{align*}
  A\amalg(\capO\circhat Y) \Iso \capO_A\circhat(Y) = 
  \coprod\limits_{q\geq 0}\capO_A[\mathbf{q}]
    \hat{\tensor} Y^{\hat{\tensor} q}
\end{align*}
in the underlying category $\Seq$. If $q\geq 0$, then $\capO_A[\mathbf{q}]$ is naturally isomorphic to a colimit of the form
\begin{align*}
  \colim\biggl(
  \xymatrix{
    \coprod\limits_{p\geq 0}\capO[\mathbf{p}\boldsymbol{+}\mathbf{q}]
    \tensor
    \Bigl[\frac{\Sigma_{p+q}}{\Sigma_p\times\Sigma_q}\cdot
    A^{\hat{\tensor} p}\Bigr] & 
    \coprod\limits_{p\geq 0}\capO[\mathbf{p}\boldsymbol{+}\mathbf{q}]
    \tensor
    \Bigl[\frac{\Sigma_{p+q}}{\Sigma_p\times\Sigma_q}\cdot
    (\capO\circhat A)^{\hat{\tensor} p}\Bigr]\ar@<-0.5ex>[l]^-{d_1}
    \ar@<-1.5ex>[l]_-{d_0}
  }
  \biggl),
\end{align*}
in $\Seq$, with $d_0$ induced by operad multiplication and $d_1$ induced by $\function{m}{\capO\circhat A}{A}$. 
\end{prop}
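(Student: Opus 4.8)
The plan is to transcribe the proof of Proposition~\ref{prop:coproduct_modules} with $(\SymSeq,\circ)$ replaced by $(\Seq,\circhat)$, tracking the single genuine difference: in the ordered setting the binomial coefficients enter as honest copowers by coset spaces rather than as $\Sigma$-orbits. First I would present the coproduct $A\amalg(\capO\circhat Y)$ in $\LtO$ as a reflexive coequalizer of free modules,
\[
  A\amalg(\capO\circhat Y)\Iso\colim\Bigl(
  \xymatrix{
  (\capO\circhat A)\amalg(\capO\circhat Y) &
  (\capO\circhat\capO\circhat A)\amalg(\capO\circhat Y)
  \ar@<0.5ex>[l]^-{d_1}\ar@<-0.5ex>[l]_-{d_0}}
  \Bigr),
\]
with $d_0$ induced by the operad multiplication $\function{m}{\capO\circhat\capO}{\capO}$ and $d_1$ by the action $\function{m}{\capO\circhat A}{A}$; this is the bar presentation of the left module $A$ with the free module $\capO\circhat Y$ adjoined, justified exactly as in Proposition~\ref{prop:coproduct_modules}. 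By Proposition~\ref{prop:reflexive_and_filtered_colimits} this reflexive coequalizer may be computed in the underlying category $\Seq$.

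The heart of the argument is the identification of the two terms in $\Seq$. Since the free functor $\function{\capO\circhat(-)}{\Seq}{\LtO}$ of Proposition~\ref{prop:free_forgetful_adjunctions} is a left adjoint it preserves coproducts, so $(\capO\circhat A)\amalg(\capO\circhat Y)\Iso\capO\circhat(A\amalg Y)$, and Proposition~\ref{prop:circ_calc} gives objectwise $\capO\circhat(A\amalg Y)\Iso\coprod_{t\geq 0}\capO[\mathbf{t}]\tensor(A\amalg Y)^{\hat{\tensor} t}$. The key computation is the ordered binomial expansion
\[
  (A\amalg Y)^{\hat{\tensor} t}\Iso\coprod_{p+q=t}
  \frac{\Sigma_{p+q}}{\Sigma_p\times\Sigma_q}\cdot
  \bigl(A^{\hat{\tensor} p}\hat{\tensor} Y^{\hat{\tensor} q}\bigr),
\]
which follows by distributing $\hat{\tensor}$ over $\amalg$ in each of the $t$ slots and then using the symmetry of $\hat{\tensor}$ to reorder each interleaving into the standard form $A^{\hat{\tensor} p}\hat{\tensor} Y^{\hat{\tensor} q}$; the number of such interleavings is exactly $|\Sigma_{p+q}/(\Sigma_p\times\Sigma_q)|$. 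Using that copowers and the external tensor $\capO[\mathbf{p}\boldsymbol{+}\mathbf{q}]\tensor(-)$ both commute with $\hat{\tensor}$ in its left variable, I would pull the factor $Y^{\hat{\tensor} q}$ out of each summand to obtain
\[
  \capO\circhat(A\amalg Y)\Iso\coprod_{q\geq 0}
  \Bigl(\coprod_{p\geq 0}\capO[\mathbf{p}\boldsymbol{+}\mathbf{q}]\tensor
  \Bigl[\frac{\Sigma_{p+q}}{\Sigma_p\times\Sigma_q}\cdot A^{\hat{\tensor} p}\Bigr]\Bigr)
  \hat{\tensor} Y^{\hat{\tensor} q},
\]
and the same computation with $\capO\circhat A$ in place of $A$ treats the second term of the coequalizer.

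Finally I would define $\capO_A\in\Array$ by letting $\capO_A[\mathbf{q}]\in\Seq$ be the coequalizer, in each fixed coproduct-degree $q$, of the two bracketed coefficient sequences (the maps $d_0,d_1$, which no longer involve $Y$). Because $(-)\hat{\tensor} Y^{\hat{\tensor} q}$ is a left adjoint in its first variable it preserves this coequalizer, so the coequalizer of the full diagram is $\coprod_{q\geq 0}\capO_A[\mathbf{q}]\hat{\tensor} Y^{\hat{\tensor} q}=\capO_A\circhat(Y)$, and reading off the coefficient sequences produces the asserted colimit formula for $\capO_A[\mathbf{q}]$. The only step carrying real content beyond the symmetric case is the ordered binomial expansion, so I expect the main obstacle to be the coset bookkeeping: verifying that the interleavings of the $A$'s and $Y$'s genuinely contribute a copower by $\Sigma_{p+q}/(\Sigma_p\times\Sigma_q)$ rather than a quotient, and checking that each of the distributivity isomorphisms needed to extract the $Y^{\hat{\tensor} q}$ factor is valid in $(\Seq,\hat{\tensor},1)$.
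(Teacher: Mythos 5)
Your proposal follows the paper's proof essentially verbatim: the same reflexive coequalizer presentation of $A\amalg(\capO\circhat Y)$, computed in $\Seq$ via Proposition~\ref{prop:reflexive_and_filtered_colimits}, the same ordered binomial expansion $(A\amalg Y)^{\hat{\tensor}t}\Iso\coprod_{p+q=t}\Sigma_{p+q}\cdot_{\Sigma_p\times\Sigma_q}A^{\hat{\tensor}p}\hat{\tensor}Y^{\hat{\tensor}q}$ with the cosets serving only as an indexing set, and the same extraction of the coefficient arrays $\capO_A[\mathbf{q}]$ as coequalizers in each degree $q$. The one point you flag as a worry --- that the interleavings give an honest copower by $\Sigma_{p+q}/(\Sigma_p\times\Sigma_q)$ rather than a quotient --- is exactly the observation the paper makes, so there is nothing to add.
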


\begin{rem}
Other possible notations for $\capO_A$ include $\U_\capO(A)$ or $\U(A)$; these are closer to the notation used in \cite{Elmendorf_Mandell, Mandell} and are not to be confused with the forgetful functors.
\end{rem}

\begin{proof}
It is easy to verify that the coproduct in $\eqref{eq:coproduct_diagram_modules_omega}$ may be calculated by a reflexive coequalizer in $\LtO$ of the form, 
\begin{align*}
  A\amalg(\capO\circhat Y)\Iso
  \colim\Bigl(
  \xymatrix{
    (\capO\circhat A)\amalg (\capO\circhat Y) & 
    (\capO\circhat\capO\circhat A)\amalg (\capO\circhat Y)
    \ar@<0.5ex>[l]^-{d_1}
    \ar@<-0.5ex>[l]_-{d_0}
  }
  \Bigl).
\end{align*}
The maps $d_0$ and $d_1$ are induced by maps $\function{m}{\capO\circhat\capO}{\capO}$ and $\function{m}{\capO\circhat A}{A}$, respectively. By Proposition \ref{prop:reflexive_and_filtered_colimits}, this reflexive coequalizer may be calculated in the underlying category $\Seq$. There are natural isomorphisms,
\begin{align*}
  (\capO\circhat A)\amalg(\capO\circhat Y)&
  \Iso  \coprod\limits_{q\geq 0}
  \Bigl(
  \coprod\limits_{p\geq 0}\capO[\mathbf{p}\boldsymbol{+}\mathbf{q}]
  \tensor
  \Bigl[\frac{\Sigma_{p+q}}{\Sigma_p\times\Sigma_q}\cdot
  A^{\hat{\tensor} p}\Bigr]
  \Bigr)
  \hat{\tensor} Y^{\hat{\tensor} q},
\end{align*}
and similarly,
\begin{align*}
  (\capO\circhat\capO\circhat A)\amalg(\capO\circhat Y)
  \Iso \coprod\limits_{q\geq 0}
  \Bigl(
  \coprod\limits_{p\geq 0}\capO[\mathbf{p}\boldsymbol{+}\mathbf{q}]
  \tensor
  \Bigl[\frac{\Sigma_{p+q}}{\Sigma_p\times\Sigma_q}\cdot
  (\capO\circhat A)^{\hat{\tensor} p}\Bigr]
  \Bigr)
  \hat{\tensor} Y^{\hat{\tensor} q},
\end{align*}
in the underlying category $\Seq$. The maps $d_0$ and $d_1$ similarly factor in the underlying category $\Seq$. It is important to note that the ordering of all tensor power factors is respected, and that we are simply using the symmetric groups in the isomorphisms
\begin{align*}
  (A\amalg Y)^{\hat{\tensor} t}\Iso\coprod\limits_{p+q=t}\Sigma_{p+q}
  \cdot_{\Sigma_p\times\Sigma_q}A^{\hat{\tensor} p}\hat{\tensor} 
  Y^{\hat{\tensor} q}
\end{align*}
to build convenient indexing sets for the tensor powers. Note that this is just a dressed up form of binomial coefficients.
\end{proof}

\begin{defn}\label{def:filtration_setup_modules_omega}
Let $\function{i}{X}{Y}$ be a morphism in $\Seq$ and $t\geq 1$. Define $Q_0^t:=X^{\hat{\tensor} t}$ and $Q_t^t:=Y^{\hat{\tensor} t}$. For $0<q<t$ define $Q_q^t$ inductively by the pushout diagrams
\begin{align*}
\xymatrix{
  \Sigma_t\cdot_{\Sigma_{t-q}\times\Sigma_{q}}X^{\hat{\tensor}(t-q)}
  \hat{\tensor} Q_{q-1}^q\ar[d]^{i_*}\ar[r]^-{\pr_*} & Q_{q-1}^t\ar[d]\\
  \Sigma_t\cdot_{\Sigma_{t-q}\times\Sigma_{q}}X^{\hat{\tensor}(t-q)}
  \hat{\tensor} Y^{\hat{\tensor} q}\ar[r] & Q_q^t
}
\end{align*}
in $\Seq^{\Sigma_t}$. We sometimes denote $Q_q^t$ by $Q_q^t(i)$ to emphasize in the notation the map $\function{i}{X}{Y}$. The maps $\pr_*$ and $i_*$ are the obvious maps induced by $i$ and the appropriate projection maps.
\end{defn}

The following is an $\Omega$-operad version of Proposition \ref{prop:small_arg_pushout_modules}. 

\begin{prop}\label{prop:small_arg_pushout_modules_omega}
Let $\capO$ be an $\Omega$-operad, $A\in\LtO$, and $\function{i}{X}{Y}$ in $\Seq$. Consider any pushout diagram in $\LtO$ of the form,
\begin{align}\label{eq:small_arg_pushout_modules_omega}
\xymatrix{
  \capO\circhat X\ar[r]^-{f}\ar[d]^{\id\circhat i} & A\ar[d]^{j}\\
  \capO\circhat Y\ar[r] & A\amalg_{(\capO\circhat X)}(\capO\circhat Y).
}
\end{align}
The pushout in \eqref{eq:small_arg_pushout_modules_omega} is naturally isomorphic to a filtered colimit of the form
\begin{align}\label{eq:filtered_colimit_modules_omega}
  A\amalg_{(\capO\circhat X)}(\capO\circhat Y)\Iso 
  \colim\Bigl(
  \xymatrix{
    A_0\ar[r]^{j_1} & A_1\ar[r]^{j_2} & A_2\ar[r]^{j_3} & \dotsb
  }
  \Bigr)
\end{align}
in the underlying category $\Seq$, with $A_0:=\capO_A[\mathbf{0}]\Iso A$ and $A_t$ defined inductively by pushout diagrams in $\Seq$ of the form
\begin{align}\label{eq:good_filtration_modules_omega}
\xymatrix{
  \capO_A[\mathbf{t}]\tensorhat Q_{t-1}^t\ar[d]^{\id\tensorhat i_*}
  \ar[r]^-{f_*} & A_{t-1}\ar[d]^{j_t}\\
  \capO_A[\mathbf{t}]\tensorhat Y^{\tensorhat t}\ar[r]^-{\xi_t} & A_t\,.
}
\end{align}
\end{prop}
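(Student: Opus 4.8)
The plan is to follow the proof of Proposition~\ref{prop:small_arg_pushout_modules} nearly verbatim, substituting $(\Seq,\circhat,\tensorhat)$ for $(\SymSeq,\circ,\tensor)$ and invoking the $\Omega$-operad analogues of each intermediate result; since $\capO_A$ is here an \emph{array} (Proposition~\ref{prop:coproduct_modules_omega}) rather than a symmetric array, the one structural change is that the $\Sigma_t$-coinvariants $\tensor_{\Sigma_t}$ of the symmetric filtration are replaced throughout by the plain $\tensorhat$ of \eqref{eq:good_filtration_modules_omega}. First I would note that the pushout \eqref{eq:small_arg_pushout_modules_omega} is computed in $\LtO$ by the reflexive coequalizer
\begin{align*}
  A\amalg_{(\capO\circhat X)}(\capO\circhat Y)\Iso\colim\Bigl(
  \xymatrix{
    A\amalg(\capO\circhat Y) & A\amalg(\capO\circhat X)\amalg(\capO\circhat Y)
    \ar@<-0.5ex>[l]_-{\ol{i}}\ar@<0.5ex>[l]^-{\ol{f}}
  }\Bigr),
\end{align*}
which by Proposition~\ref{prop:reflexive_and_filtered_colimits} may be formed in the underlying category $\Seq$. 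Exactly as in the commutative diagram \eqref{eq:induced_maps_modules}, whose rows are now the reflexive coequalizers of Proposition~\ref{prop:coproduct_modules_omega}, the pushout is identified with the colimit of the left-hand column, i.e. the coequalizer of $\ol{i},\ol{f}\colon\capO_A\circhat(X\amalg Y)\rightrightarrows\capO_A\circhat(Y)$.

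Next I would expand the source using the $\Omega$-operad coproduct formula of Proposition~\ref{prop:coproduct_modules_omega} and read off component maps $\ol{f}_{q,p}$ and $\ol{i}_{q,p}$. The only genuine departure from the symmetric argument is the combinatorics here: for sequences the decomposition $(A\amalg Y)^{\tensorhat t}\Iso\coprod_{p+q=t}\Sigma_{p+q}\cdot_{\Sigma_p\times\Sigma_q}A^{\tensorhat p}\tensorhat Y^{\tensorhat q}$ uses $\Sigma_{p+q}/(\Sigma_p\times\Sigma_q)$ purely as an indexing set---a dressed-up binomial coefficient recording which of the $t$ ordered slots carry an $A$-factor versus a $Y$-factor---while the tensor factors themselves keep their fixed ordering and admit no permutation action. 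Tracking this indexing, a cone in $\Seq$ out of the left-hand column amounts to maps $\varphi_q\colon\capO_A[\mathbf{q}]\tensorhat Y^{\tensorhat q}\to\cdot$ satisfying the compatibility squares analogous to \eqref{eq:useful_cone_diagrams} for all $p,q\geq0$; since $\ol{i}_{q,0}=\ol{f}_{q,0}=\id$, only $p>0$ matters.

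Finally I would reconstruct this coequalizer as the filtered colimit $\colim_t A_t$ with $A_0\Iso A$ and $A_t$ built by the gluing pushouts \eqref{eq:good_filtration_modules_omega}, where $Q_{t-1}^t$ is the sequence-level construction of Definition~\ref{def:filtration_setup_modules_omega}. Comparing this filtration to the naive one computing $A\amalg(\capO\circhat Y)$ produces a comparison map $\ol{\xi}\colon\capO_A\circhat(Y)\to\colim_t A_t$; one checks $\ol{\xi}\,\ol{i}=\ol{\xi}\,\ol{f}$ by inspecting the compatibility squares, and checks that any cone $\varphi$ factors uniquely through $\ol{\xi}$ by assembling maps $\ol{\varphi}_t\colon A_t\to\cdot$ inductively from the $\varphi_q$ (with $\ol{\varphi}_t\,j_t=\ol{\varphi}_{t-1}$ and $\ol{\varphi}_t\,\xi_t=\varphi_t$), so that $\colim_t A_t$ satisfies the universal property of the coequalizer. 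I expect the only real obstacle to be the careful bookkeeping of factor orderings in the middle step; once the indexing-set reading of the binomial decomposition is fixed, every diagram chase transports mechanically from Section~\ref{sec:sigma_operad_case_for_pushouts} under the substitutions $\tensor\mapsto\tensorhat$ and $\circ\mapsto\circhat$, the coinvariants $\tensor_{\Sigma_t}$ giving way to the plain $\tensorhat$ of \eqref{eq:good_filtration_modules_omega} precisely because ordered tensor powers carry no permutation of their factors.
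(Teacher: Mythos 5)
Your proposal is correct and matches the paper's own proof essentially step for step: the paper likewise reduces the pushout to the reflexive coequalizer $\ol{i},\ol{f}\colon\capO_A\circhat(X\amalg Y)\rightrightarrows\capO_A\circhat(Y)$ via Propositions~\ref{prop:coproduct_modules_omega} and \ref{prop:reflexive_and_filtered_colimits}, characterizes cones out of it by the squares \eqref{eq:cone_data_omega} (noting only $p>0$ matters), and then transports the filtration argument of Proposition~\ref{prop:small_arg_pushout_modules} verbatim. You also correctly isolate the one genuine difference---the symmetric groups in the decomposition of $(A\amalg Y)^{\tensorhat t}$ serve purely as indexing sets for ordered tensor powers, so the coinvariants $\tensor_{\Sigma_t}$ of \eqref{eq:good_filtration_modules} become the plain $\tensorhat$ of \eqref{eq:good_filtration_modules_omega}---which is exactly the point the paper flags in the proof of Proposition~\ref{prop:coproduct_modules_omega}.
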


\begin{proof}
It is easy to verify that the pushout in \eqref{eq:small_arg_pushout_modules_omega} may be calculated by a reflexive coequalizer in $\LtO$ of the form
\begin{align*}
  A\amalg_{(\capO\circhat X)}(\capO\circhat Y)
  \Iso\colim\Bigl(
  \xymatrix{
    A\amalg(\capO\circhat Y)
    & A\amalg(\capO\circhat X)\amalg(\capO\circhat Y)
    \ar@<-0.5ex>[l]_-{\ol{i}}\ar@<0.5ex>[l]^-{\ol{f}}
  }
  \Bigr).
\end{align*}
By Proposition \ref{prop:reflexive_and_filtered_colimits}, this reflexive coequalizer may be calculated in the underlying category $\Seq$. The maps $\ol{i}$ and $\ol{f}$ are induced by maps $\id\circhat i_*$ and $\id\circhat f_*$ which fit into the commutative diagram
\begin{align}\label{eq:induced_maps_modules_omega}
\xymatrix{
  \capO_A\circhat(X\amalg Y)\ar@<-0.5ex>[d]_{\ol{i}}\ar@<0.5ex>[d]^{\ol{f}} & 
  \capO\circhat(A\amalg X\amalg Y)\ar[l]
  \ar@<-0.5ex>[d]_{\id\circhat i_*}\ar@<0.5ex>[d]^{\id\circhat f_*} &
  \capO\circhat\bigl((\capO\circhat A)\amalg X\amalg Y\bigr)
  \ar@<-0.5ex>[l]_-{d_0}\ar@<0.5ex>[l]^-{d_1}
  \ar@<-0.5ex>[d]_{\id\circhat i_*}\ar@<0.5ex>[d]^{\id\circhat f_*}\\
  \capO_A\circhat(Y) & \capO\circhat(A\amalg Y)\ar[l] & 
  \capO\circhat\bigl((\capO\circhat A)\amalg Y)
  \ar@<-0.5ex>[l]_-{d_0}\ar@<0.5ex>[l]^-{d_1}
}
\end{align}
in $\LtO$, with rows reflexive coequalizer diagrams, and maps $i_*$ and $f_*$ in $\Seq$ induced by $\function{i}{X}{Y}$ and $\function{f}{X}{A}$. Here we have used the same notation for both $f$ and its adjoint. By Propositions \ref{prop:coproduct_modules_omega} and \ref{prop:reflexive_and_filtered_colimits}, the pushout in \eqref{eq:small_arg_pushout_modules_omega} may be calculated by the colimit of the left-hand column of \eqref{eq:induced_maps_modules_omega} in the underlying category $\Seq$. We want to reconstruct this colimit via a suitable filtered colimit.

A first step is to describe more explicitly what it means to give a cone in $\Seq$ out of the left-hand column of \eqref{eq:induced_maps_modules_omega}. Let $\function{\varphi}{\capO_A\circhat(Y)}{\cdot}$ be a morphism in $\Seq$ and define $\varphi_q:=\varphi\inmap_q$. Then $\varphi\ol{i}=\varphi\ol{f}$ if and only if the diagrams
\begin{align}
\label{eq:cone_data_omega}
\xymatrix{
  \capO_A[\mathbf{p}\boldsymbol{+}\mathbf{q}]\tensorhat
  \bigl[\Sigma_{p+q}\cdot_{\Sigma_p\times\Sigma_q}X^{\tensorhat p}
  \tensorhat Y^{\tensorhat q}\bigr]\ar[d]^{\ol{i}_{q,p}}
  \ar[r]^-{\ol{f}_{q,p}} & 
  \capO_A[\mathbf{q}]\tensorhat Y^{\tensorhat q}\ar[d]^{\varphi_q}\\
  \capO_A[\mathbf{p}\boldsymbol{+}\mathbf{q}]\tensorhat Y^{\tensorhat(p+q)}
  \ar[r]^-{\varphi_{p+q}} & \cdot
}
\end{align}
commute for every $p,q\geq 0$. Here, the maps $\ol{f}_{q,p}$ and $\ol{i}_{q,p}$ are the obvious maps induced by $f$ and $i$, respectively, exactly as in the proof of Proposition \ref{prop:small_arg_pushout_modules}. Since $\ol{i}_{q,0}=\id$ and $\ol{f}_{q,0}=\id$, it is sufficient to consider $q\geq 0$ and $p>0$.

The next step is to reconstruct the colimit of the left-hand column of \eqref{eq:induced_maps_modules_omega} in $\Seq$ via a suitable filtered colimit in $\Seq$. Define $A_0:=\capO_A[\mathbf{0}]\Iso A$ and for each $t\geq 1$ define $A_t$ by the pushout diagram \eqref{eq:good_filtration_modules_omega} in $\Seq$. The maps $f_*$ and $i_*$ are induced by the appropriate maps $\ol{f}_{q,p}$ and $\ol{i}_{q,p}$. Arguing exactly as in the proof of Proposition \ref{prop:small_arg_pushout_modules}, it is easy to use the diagrams \eqref{eq:cone_data_omega} to verify that \eqref{eq:filtered_colimit_modules_omega} is satisfied.
\end{proof}

The following is a version of Proposition \ref{prop:punctured_cube_calculation} for sequences, and is proved by exactly the same argument.

\begin{prop}\label{prop:induced_map_in_punctured_cube_omega}
Let $\function{i}{X}{Y}$ be a cofibration (resp. acyclic cofibration) in $\Seq$. Then the induced map $\function{i_*}{Q_{t-1}^t}{Y^{\hat{\tensor} t}}$ is a cofibration (resp. acyclic cofibration) in the underlying category $\Seq$.
\end{prop}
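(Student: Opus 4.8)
The plan is to mirror the proof of Proposition~\ref{prop:punctured_cube_calculation} essentially verbatim, replacing $\tensor$ throughout by $\tensorhat$ and invoking the sequence half (the ``resp.'' statement) of Theorem~\ref{thm:well_behaved_tensor} in place of its symmetric-sequence half. The essential observation is that although $Q_{t-1}^t$ is built in Definition~\ref{def:filtration_setup_modules_omega} using the $\Sigma_t$-equivariant copowers $\Sigma_t\cdot_{\Sigma_{t-q}\times\Sigma_q}(-)$, the claim concerns $\function{i_*}{Q_{t-1}^t}{Y^{\tensorhat t}}$ only in the underlying category $\Seq$, where those copowers reassemble into an ordinary iterated tensor product and the equivariance disappears from view.

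First I would establish the sequence analogue of Proposition~\ref{prop:cube_calculation}: for a morphism $\function{i}{X}{Y}$ in $\Seq$ and $t\geq 2$, there is a natural isomorphism $Q_{t-1}^t\Iso\colim(w)$ in the underlying category $\Seq$, where $\functor{w}{\pCube_t}{\Seq}$ sends a vertex $(v_1,\dotsc,v_t)$ to $c_1\tensorhat\dotsb\tensorhat c_t$ with $c_i=X$ or $Y$ according as $v_i=0$ or $v_i=1$, and with arrows induced by $i$. This is proved exactly as in the symmetric case, using that colimits in $\Seq$ are computed objectwise and that $\tensorhat$ preserves the relevant colimiting cones (the sequence versions of Proposition~\ref{prop:colimit_properties_tensor}); the only bookkeeping is to check that $\tensorhat$ respects the cube's face structure, which it does because $(\Seq,\tensorhat,1)$ is symmetric monoidal by Proposition~\ref{prop:tensor_monoidal}.

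Next I would compute the colimit of $w$ inductively by pushout corner maps, obtaining for each $t\geq 2$ a pushout square in $\Seq$ of the form
\begin{align*}
\xymatrix{
  Q_{t-2}^{t-1}\tensorhat X\ar[r]^-{\id\tensorhat i}\ar[d]^-{i_*\tensorhat\id}
  & Q_{t-2}^{t-1}\tensorhat Y\ar[d]\\
  Y^{\tensorhat(t-1)}\tensorhat X\ar[r] & Q_{t-1}^t,
}
\end{align*}
which exhibits $\function{i_*}{Q_{t-1}^t}{Y^{\tensorhat t}}$ as the induced pushout corner map of $i_*\tensorhat\id$ and $\id\tensorhat i$, with base case $Q_1^2\Iso Y\tensorhat X\amalg_{X\tensorhat X}X\tensorhat Y$. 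Iterated application of the $\tensorhat$ half of Theorem~\ref{thm:well_behaved_tensor} then shows that if $i$ is a cofibration (resp.\ acyclic cofibration) in $\Seq$, each such corner map is a cofibration (resp.\ acyclic cofibration), and hence so is $i_*$.

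Since every step is a literal transcription of the symmetric argument, I do not expect a genuine obstacle. The one place that warrants care — and the only substantive difference from the symmetric case — is confirming that passing to the underlying category $\Seq$ legitimately discards the $\Sigma_t$-actions of Definition~\ref{def:filtration_setup_modules_omega}, so that the punctured-cube description (which involves no symmetric-group copowers at all) is valid for $\tensorhat$; once that identification is in hand, Theorem~\ref{thm:well_behaved_tensor} performs all of the homotopical work.
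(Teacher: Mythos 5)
Your proposal is correct and is precisely the argument the paper intends: it states that Proposition~\ref{prop:induced_map_in_punctured_cube_omega} ``is proved by exactly the same argument'' as Proposition~\ref{prop:punctured_cube_calculation}, namely the punctured-cube identification of $Q_{t-1}^t$, the inductive pushout-corner-map description of $i_*$, and iterated application of the $\tensorhat$ half of Theorem~\ref{thm:well_behaved_tensor}. Your added remark about discarding the $\Sigma_t$-equivariance of Definition~\ref{def:filtration_setup_modules_omega} when working in the underlying category $\Seq$ is a reasonable point of care, but it is the same silent step already taken in the symmetric case, so nothing new is needed.
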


\begin{prop}\label{prop:monoid_axiom_for_seq}
Assume that $\C$ satisfies Basic Assumption~\ref{BasicAssumption} and in addition satisfies the monoid axiom. Then $(\Seq,\hat{\tensor},1)$ satisfies the monoid axiom.
\end{prop}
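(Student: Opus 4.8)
The plan is to reduce the monoid axiom for $(\Seq,\tensorhat,1)$ to the monoid axiom for $(\C,\tensor,\unit)$ by working entirely objectwise. The essential point is that the projective model structure on $\Seq$ is objectwise in the strongest sense: since the small skeleton $\Omega'$ has only identity morphisms, $\Seq$ is isomorphic to a product $\prod_{n\geq 0}\C$, so not only the weak equivalences and fibrations but also the cofibrations are detected at each $\mathbf{n}$. In particular, if $\function{f}{K}{L}$ is an acyclic cofibration in $\Seq$, then each $\function{f[\mathbf{a}]}{K[\mathbf{a}]}{L[\mathbf{a}]}$ is an acyclic cofibration in $\C$.

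Next I would compute, using \eqref{eq:tensors3} with $t=2$, the objectwise value of a generating map $\function{f\tensorhat\id}{K\tensorhat B}{L\tensorhat B}$. For each $m\geq 0$ there is a natural isomorphism
\begin{align*}
  (f\tensorhat\id_B)[\mathbf{m}]\Iso
  \coprod_{a+b=m}f[\mathbf{a}]\tensor\id_{B[\mathbf{b}]},
\end{align*}
a \emph{finite} coproduct (of $m+1$ summands) of maps $f[\mathbf{a}]\tensor\id_{B[\mathbf{b}]}$, each of which is an acyclic cofibration in $\C$ tensored with an object --- exactly the shape of map appearing in the monoid axiom for $\C$. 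This is where the argument uses crucially that $\tensorhat$ introduces no symmetric-group quotients (contrast \eqref{eq:tensors1} with \eqref{eq:tensors3}), which is precisely why the $\Omega$-operad case needs only the monoid axiom rather than the strong cofibrancy hypothesis of the $\Sigma$-operad case.

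Now consider a (possibly transfinite) composition $\function{g}{A}{A_\infty}$ of pushouts of maps $f_\alpha\tensorhat\id_{B_\alpha}$ as in the statement. Since colimits in $\Seq$ are computed objectwise (Proposition~\ref{prop:tensor_monoidal}), evaluating at $\mathbf{m}$ turns $g$ into a transfinite composition of objectwise pushouts whose building blocks are pushouts of the finite coproducts $\coprod_{a+b=m}f_\alpha[\mathbf{a}]\tensor\id_{B_\alpha[\mathbf{b}]}$. A pushout of a finite coproduct of maps factors as a finite composite of pushouts of the individual summands (push out one summand at a time), so, refining the transfinite sequence accordingly, $g[\mathbf{m}]$ is exhibited as a transfinite composition of pushouts of maps of the form (acyclic cofibration)$\tensor$(object) in $\C$.

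The monoid axiom for $\C$ then applies directly to show $g[\mathbf{m}]$ is a weak equivalence in $\C$ for every $m\geq 0$; hence $g$ is an objectwise weak equivalence, i.e.\ a weak equivalence in $\Seq$. The only step requiring genuine care is the refinement in the previous paragraph: one must check that a pushout of a coproduct really is a composite of pushouts of its summands and that interleaving these finite composites preserves the overall transfinite-composition structure objectwise (the refined index is an ordinal of order type roughly $\lambda\cdot(m+1)$). This is a routine manipulation of colimits rather than a genuine obstacle, so the substance of the proof is the objectwise identification in the second paragraph together with the product description of the model structure on $\Seq$.
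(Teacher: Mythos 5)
Your proposal is correct and follows essentially the same route as the paper's proof: the paper likewise computes objectwise via \eqref{eq:tensors3}, observes that a pushout of a finite coproduct of maps factors as a finite composition of pushouts of the summands, and then invokes the monoid axiom for $\C$. Your write-up simply fills in the details (the objectwise identification of cofibrations in $\Seq$ and the interleaving of the finite composites into the transfinite composition) that the paper leaves as a sketch.
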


\begin{proof}
Since colimits in $\Seq$ are calculated objectwise, use \eqref{eq:tensors3} together with an argument that the pushout of a coproduct $\amalg_\alpha f_\alpha$ of a finite set of maps can be written as a finite composition of pushouts of the maps $f_\alpha$. 
\end{proof}

\begin{proof}[Proof of Proposition~\ref{prop:needed_for_small_object_omega}]
By Proposition~\ref{prop:small_arg_pushout_modules_omega}, $j$ is a (possibly transfinite) composition of pushouts of maps of the form $\id\tensorhat i_*$, and Propositions~\ref{prop:monoid_axiom_for_seq} and \ref{prop:induced_map_in_punctured_cube_omega} finish the proof.
\end{proof}

\bibliographystyle{plain}
\bibliography{ModulesOperadsMonoidal.bib}

\end{document}